\theoremstyle{plain}
\newtheorem{theorem}{Theorem}[section]
\newtheorem{lemma}[theorem]{Lemma}
\newtheorem{coro}[theorem]{Corollary}
\newtheorem{prop}[theorem]{Proposition}
\newtheorem{fact}{Fact}
\theoremstyle{definition}
\newtheorem{de}[theorem]{Definition}
\newtheorem{exa}[theorem]{Example}
\newtheorem{question}[theorem]{Question}
\theoremstyle{remark}
\newtheorem{remark}[theorem]{Remark}
\numberwithin{equation}{section}
\def\R{\mathbb R}
\def\Z{\mathbb Z}
\def\D{\mathcal D}
\def\A{\mathcal A}
\def\C{\mathscr C}
\def\c{\mathcal C}
\def\B{\mathcal B}
\begin{document}

\title[Fractal squares with finitely many connected components]{Fractal squares with finitely many \\ connected components}

\author{Jian-Ci Xiao}
\address{School of Mathematical Sciences, Zhejiang University, Hangzhou 310027,
China}\thanks{This work is partly supported by NSFC grant 11771391.}
\email{jcxshaw24@zju.edu.cn}



\date{}


\keywords{fractal square, connected component, graph.}

\begin{abstract}

In this paper, we present an effective method to characterize completely when a disconnected fractal square has  only finitely many connected components. Our method is to establish some graph structures on fractal squares to reveal the evolution of the connectedness during their geometric iterated construction. We also prove that every fractal square contains either finitely or uncountably many connected components. A few examples, including the construction of fractal squares with exactly $m \geqslant 2$ connected components, are added in addition.
\end{abstract}

\maketitle

\section{Introduction}
\label{intro}
\setcounter{equation}{0}

\subsection{Background and some notations}
\text{ }

The topological properties of self-similar sets have been studied frequently in recent years. One can see the equivalence of connectedness and path-connectedness in Hata's classic article~\cite{Hata} (or see Kigami~\cite{Kigami}). In~\cite{LuoRaoTan}, Luo, Rao, and Tan studied the topological properties of the interior and boundary of self-similar sets satisfying the open set condition. Another interesting topic is the Lipschitz equivalence between self-similar sets, for which one can refer to Falconer and Marsh~\cite{FM92}, Rao-Ruan-Wang~\cite{RRW12,RRW13} etc. Research on other aspects can be seen in Bandt and Keller~\cite{BanKe}, Luo and Wang~\cite{LuoWang}, Roinestad~\cite{RoinM, RoinP} etc.

For a fixed integer $N \geqslant 2$ and a non-empty digital set $\D \subset \{0,1,\ldots,N-1\}^2$, there exists a unique non-empty compact set $F = F(N, \D)$ satisfying the set equation (see \cite{Fal14, Hut81})
\begin{equation}\label{eq:fracsq}
F = \bigcup_{d \in \D} \varphi_d(F),
\end{equation}
where $\varphi_d(x)=(x+d)/N$. In other words, $F$ is the self-similar attractor of the iterated function system $\{\varphi_d:d \in \D\}$. Note that the condition \eqref{eq:fracsq} is equivalent to $F=(F+\D)/N$ and we always call $F$ a \emph{fractal square}. One can also consider fractal squares through the following geometric iterated construction: let $Q_0 = [0, 1]^2$ and recursively define
\begin{equation}\label{eq:qn} 
Q_{n+1} = \bigcup_{d \in \D}\varphi_d(Q_n) = \frac{Q_n+\D}{N}, \quad n=0,1,2,\ldots, 
\end{equation}
then $Q_{n+1} \subset Q_n$ for each $n$, and $F=\bigcap_{n=1}^\infty Q_n$. A classic example of fractal squares is the Sierpi\'nski carpet (one can of course regard fractal squares as a generalization of it). In~\cite{LauLuoRao}, Lau, Luo and Rao provided a characterization of the topological structure of fractal squares through their connected components. They claim that if $F$ is not totally disconnected, then either it contains a non-trivial connected component which is not a line segment, or all non-trivial connected components of $F$ are parallel line segments. Some other properties such as the Lipschitz equivalence, gap sequences, cut points and cut index of fractal squares have also been studied in \cite{LMR19, RuanWang}.


Naturally, we have the following question:

\begin{question}
  Is it possible for a disconnected fractal square to contain only a finite number of connected components? And if this were the case, can we present a method to characterize when this happens and to determine the number of connected components?
\end{question}

In this paper, we will focus on these problems. In fact, we obtain an affirmative answer to the first one and answer the second one by presenting a complete characterization of fractal squares with finitely many connected components.  Our method is to construct a graph $G_F$ (and another graph $G'_F$ if necessary) corresponding to $F$ and study their relations on connectedness. By the way, in Cristea and Steinsky~\cite{CriSte} the authors constructed a graph similar to our first one and gave a method to determine whether a fractal square is connected or not.

We list below two important notations used throughout this paper.
\begin{itemize}
\item For any set $A$, let $\C(A)$ be the collection of all connected components of $A$.
\item For any set (or collection) $A$, the cardinality of $A$ is denoted by $\#A$.
\end{itemize}

\subsection{Construction of graphs and statement of results}
\text{ }

Suppose $F=F(N,\D)$ is a fractal square. We first introduce a concept of ``connectedness'' in $\D$.

\begin{de}
A set $\A \subset \D$ is said to be \emph{connected} if for any $d, d' \in \A$, there exist $d_1, \ldots, d_n \in \A$ such that $d_1=d, d_n=d'$, and $\varphi_{d_i}(F) \cap \varphi_{d_{i+1}}(F) \neq \varnothing$ for each $1 \leqslant i \leqslant n-1$.
The maximal (ordered by inclusion) connected subsets of $\D$ are called \emph{connected components} of $\D$ (so the notation $\C(\D)$ makes sense). 
\end{de}

We should point out that $F$ is connected if and only if $\D$ is connected, which is a classic result (for example, see~\cite{Hata, Kigami}).
Since disconnected fractal squares are of our major concern, we always assume that $\#\C(\D)=m \geqslant 2$ and set $\C(\D)=\{\D_1,\ldots,\D_m\}$. 

For $1\leqslant i \leqslant m$, we denote
\begin{equation}\label{eq:Fi}
F_i = \bigcup_{d \in \D_i} \varphi_d(F).
\end{equation} 
In other words, $F_i$ is the part of $F$ lying in $\bigcup_{d \in \D_i}\varphi_d([0,1]^2)$ (a union of squares with side length $1/N$). By the definition of the connectedness in $\D$ one can easily see that $F_i \cap F_j = \varnothing$ if $i \neq j$ and $F = \bigcup_{i=1}^m F_i$. 
Note that $\varphi_d(F_i)$ is a scaled copy of $F_i$ for each $d \in \D$, and clearly $\varphi_d(F)=\bigcup_{i=1}^m \varphi_d(F_i)$.
	
\noindent\textbf{Construction of the graph $G_F$}. The vertex set of $G_F$ is $\{(d,i): d \in \D, 1\leqslant i \leqslant m\}$, and there exists an edge joining $(d_1,i_1)$ and $(d_2,i_2)$ if and only if $\varphi_{d_1}(F_{i_1}) \cap \varphi_{d_2}(F_{i_2}) \neq \varnothing$. The graph $G_F$ is called \emph{the level-1 graph of $F$} (see Example~\ref{exa:1} for an illustration). 

\begin{de}
Suppose $G$ is a graph with vertex set $V$ and edge set $E$. We call $V' \subset V$ \emph{connected} if every pair of distinct vertices in $V'$ can be joined by a path in $E$. Connected components of $G$ are maximal (ordered by inclusion) connected subsets of $V$.
\end{de}
We have to mention here that our definition of connected components of a graph is slightly different from the customary one (maximal connected subgraph). This is settled mainly for the notational convenience of later exposition.

It is easy to see that $\#\C(F)\geqslant \#\C(G_F) \geqslant \#\C(\D)$ (Lemma~\ref{lem:2-2}). The following theorem presents a sufficient condition for a fractal square to have finitely many connected components.

\begin{theorem}\label{thm:main1}
Suppose $\#\C(\D) \geqslant 2$. Then $\#\C(F)=\#\C(\D)$ if and only if $\#\C(G_F)=\#\C(\D)$.
\end{theorem}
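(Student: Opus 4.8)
The ``only if'' part is immediate from Lemma~\ref{lem:2-2}: if $\#\C(F)=\#\C(\D)=m$, then $m=\#\C(F)\geqslant\#\C(G_F)\geqslant\#\C(\D)=m$, forcing $\#\C(G_F)=m$. So the whole content is the converse, and the first move is to reduce it to the pieces $F_i$. Since $F=\bigcup_{i=1}^m F_i$ with the $F_i$ compact and pairwise disjoint, they lie at mutually positive distance, hence $\C(F)=\bigcup_{i=1}^m\C(F_i)$ and $\#\C(F)=\sum_i\#\C(F_i)$. It therefore suffices to prove that \emph{each $F_i$ is connected} whenever $\#\C(G_F)=m$.

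To do this I climb the construction. For $n\geqslant1$ let $G_F^{(n)}$ be the level-$n$ analogue of $G_F$: its vertices are the ``level-$n$ pieces'' $\varphi_{d_1}\cdots\varphi_{d_n}(F_i)$ ($d_1,\dots,d_n\in\D$, $1\leqslant i\leqslant m$), with an edge exactly when two pieces meet, so $G_F^{(1)}=G_F$. Because a level-$n$ piece lies inside $F_j$ exactly when its first digit $d_1$ belongs to $\D_j$ (the $F_j$ being disjoint), $G_F^{(n)}$ splits as a disjoint union of $m$ edge-separated subgraphs, one inside each $F_j$; hence $\#\C(G_F^{(n)})\geqslant m$, with equality if and only if, for every $j$, all level-$n$ pieces inside $F_j$ lie in one connected component of $G_F^{(n)}$. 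The key claim is that $\#\C(G_F^{(n)})=m$ for all $n$; I prove it by induction, the base case $n=1$ being the hypothesis.

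For the inductive step assume $\#\C(G_F^{(n)})=m$. Fix $j$ and two level-$(n+1)$ pieces $A,B$ inside $F_j$; writing $A=\varphi_{d_1}\varphi_{d_2}\cdots\varphi_{d_{n+1}}(F_i)$ with $d_1\in\D_j$, set $Q_A:=\varphi_{d_1}(F_c)$ where $d_2\in\D_c$, so $A\subset Q_A\subset F_j$, and similarly define $Q_B\subset F_j$. Two observations combine. (a) Any level-$1$ piece $Q=\varphi_e(F_c)$ is the union of its refining level-$(n+1)$ pieces $\varphi_e(P)$, $P$ a level-$n$ piece inside $F_c$; applying the similarity $\varphi_e$ (which preserves and reflects intersections) to the inductive hypothesis for $F_c$ shows these refiners all lie in one component of $G_F^{(n+1)}$. (b) By the hypothesis $\#\C(G_F)=m$, $Q_A$ and $Q_B$ are joined by a path $Q_A=R_0,R_1,\dots,R_L=Q_B$ in $G_F$, all $R_\ell$ inside $F_j$. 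Since $R_\ell\cap R_{\ell+1}\neq\varnothing$, some refiner of $R_\ell$ meets some refiner of $R_{\ell+1}$, giving an edge of $G_F^{(n+1)}$ between the two refinement blocks; as each block is connected in $G_F^{(n+1)}$ by (a), walking along $R_0,\dots,R_L$ shows that all refiners of $R_0\cup\cdots\cup R_L$ --- in particular $A$ and $B$ --- lie in one component of $G_F^{(n+1)}$. As $A,B$ were arbitrary, $\#\C(G_F^{(n+1)})=m$, completing the induction.

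Finally, granted $\#\C(G_F^{(n)})=m$ for all $n$, each $F_i$ is connected. A level-$n$ piece has diameter at most $\sqrt2\,N^{-n}$, so given $\varepsilon>0$ choose $n$ with $\sqrt2\,N^{-n}<\varepsilon$; the level-$n$ pieces inside $F_i$ form a connected graph, so walking along a path of such pieces and picking a common point at each intersection produces, for any $x,y\in F_i$, a finite chain in $F_i$ from $x$ to $y$ with all consecutive gaps $<\varepsilon$. Thus $F_i$ is well-chained, and a well-chained compact set is connected (a separation into two closed sets at distance $\delta>0$ cannot be crossed by an $\varepsilon$-chain with $\varepsilon<\delta$); hence $\#\C(F)=\sum_i\#\C(F_i)=m$. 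I expect the inductive step of the second paragraph to be the crux: inside a single $\varphi_e(F)$ the level-$(n+1)$ pieces already fall into $m$ separate blocks, so merging these blocks is impossible using scale $n$ alone and must be done by routing through neighbouring level-$1$ pieces --- which is precisely where $\#\C(G_F)=m$ is used, not merely the connectedness of $\D$. Getting the two scales (level $1$ for routing, level $n$ for connectivity inside each block) to mesh cleanly is the delicate point; everything else is routine bookkeeping.
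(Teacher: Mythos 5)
Your proposal is correct, and the skeleton is the same as the paper's: reduce to showing each $F_i$ is connected, and run an induction over scales in which the hypothesis $\#\C(G_F)=\#\C(\D)$ (equivalently, that each $\{(d,j):d\in\D_i,\ 1\leqslant j\leqslant m\}$ is a single component of $G_F$) supplies the chain that routes between neighbouring level-$1$ pieces, while the inductive hypothesis supplies connectivity inside each piece. Where you genuinely diverge is in the objects carrying the induction and in the limiting device. The paper inducts on the \emph{outer} approximations $Q_n|_{\D_i}=\bigcup_{d\in\D_i}\varphi_d(Q_{n-1})$, proves each is path-connected (using $F_{j_k}\subset Q_n|_{\D_{j_k}}$ to transfer the nonempty intersections $\varphi_{d_k}(F_{j_k})\cap\varphi_{d_{k+1}}(F_{j_{k+1}})\neq\varnothing$ to the $Q_n$-level), and then invokes the nested-intersection lemma (Lemma~\ref{lem:Mun}) to get connectedness of $F_i=\bigcap_n Q_n|_{\D_i}$. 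You instead induct on graphs $G_F^{(n)}$ built on the genuine level-$n$ pieces $\varphi_{d_1}\cdots\varphi_{d_n}(F_i)$ of $F$ itself --- for $n=2$ this is essentially the level-$2$ apparatus the paper only deploys later for Theorem~\ref{thm:main2} --- and you pass to the limit by observing that $F_i$ is well-chained at every scale and that a well-chained compact set is connected. Both limiting arguments are standard and both are complete here; the paper's version has the mild advantage of working with sets that are manifestly path-connected finite unions of squares (so no separate chaining lemma is needed), while yours stays inside $F$ throughout and makes the multi-scale graph structure explicit, which arguably foreshadows the level-$2$ construction more naturally. One small point worth making explicit if you write this up: in step (b) you should note that the path $R_0,\dots,R_L$ stays inside $F_j$ precisely because, under the hypothesis, the component of $G_F$ containing $Q_A$ is exactly $\{(d,i):d\in\D_j,\ 1\leqslant i\leqslant m\}$ (the paper records this as \eqref{eq:add1}); you use this but state it only in passing.
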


In general, $G_F$ may contain more connected components than $\D$ (e.g., see Example~\ref{exa:last}). In this case, we know that $\#\C(F)>\#\C(\D)$ by Theorem~\ref{thm:main1}. It is of interest that whether $F$ can still have only finitely many connected components or not. Toward this end, we turn to the construction of the so called ``level-2" graph, which can be regarded as an advanced version of the previous one. Suppose $\#\C(G_F)=M$, say $\C(G_F) = \{\c_1, \c_2, \cdots, \c_M\}$. 

\begin{lemma}\label{lem:3}
If we let $\D^* = N\D+\D$ and $F^*(=F)$ be the fractal square satisfying $F^*=\bigcup_{d' \in \D^*}\psi_{d'}(F^*)$, where $\psi_{d'}(x)=(x+d')/N^2$, then $\#\C(\D^*)=\#\C(G_F)=M$ and rearranging if necessary we have
\begin{equation}\label{eq:ej} 
\bigcup_{(d,i) \in \c_j} \varphi_d(F_i) = \bigcup_{d' \in \D^*_j} \psi_{d'}(F^*) =: F^*_j, \quad 1 \leqslant j \leqslant M, 
\end{equation}
where $\D^*_1, \ldots, \D^*_M$ are connected components of $\D^*$.
\end{lemma}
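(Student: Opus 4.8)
The plan is to exploit the composition identity $\psi_{Nd+e}=\varphi_d\circ\varphi_e$, which follows at once from $\varphi_d(x)=(x+d)/N$ and $\psi_{d'}(x)=(x+d')/N^2$, in order to split $\D^*$ into ``blocks'' indexed by the vertices of $G_F$ and then transport connectivity back and forth between the two objects. As a preliminary I would note that $(d,e)\mapsto Nd+e$ is injective (uniqueness of base-$N$ digits), so $\#\D^*=(\#\D)^2$, and that $F=\bigcup_{d\in\D}\varphi_d(F)=\bigcup_{d,e\in\D}\varphi_d\circ\varphi_e(F)=\bigcup_{d'\in\D^*}\psi_{d'}(F)$, whence $F^*:=F$ is indeed the attractor of $\{\psi_{d'}:d'\in\D^*\}$.

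For a vertex $(d,i)$ of $G_F$ set $S_{(d,i)}:=Nd+\D_i\subset\D^*$. Since $\{\D_i\}$ partitions $\D$ and distinct ``high digits'' $d$ give disjoint translates, the family $\{S_{(d,i)}\}$ is a partition of $\D^*$, and the key computation $\varphi_d(F_i)=\bigcup_{e\in\D_i}\varphi_d\circ\varphi_e(F)=\bigcup_{d'\in S_{(d,i)}}\psi_{d'}(F)$ holds. With this in hand the displayed equality reduces to showing that $\D^*_j:=\bigcup_{(d,i)\in\c_j}S_{(d,i)}$ is exactly a connected component of $\D^*$ for each $j$. First I would check that each block $S_{(d,i)}$ is connected in $\D^*$: since $\varphi_d$ is injective, $\psi_{Nd+e_1}(F)\cap\psi_{Nd+e_2}(F)=\varphi_d(\varphi_{e_1}(F)\cap\varphi_{e_2}(F))$, so a connecting chain for $e_1,e_2$ inside $\D_i$ — which exists because $\D_i$ is connected — translates termwise into one for $Nd+e_1,Nd+e_2$ inside $S_{(d,i)}$.

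Next I would introduce the auxiliary graph $H$ on the vertex set of $G_F$, joining two distinct vertices $(d_1,i_1),(d_2,i_2)$ precisely when $\psi_{d'_1}(F)\cap\psi_{d'_2}(F)\ne\varnothing$ for some $d'_1\in S_{(d_1,i_1)}$, $d'_2\in S_{(d_2,i_2)}$, and show that $H$ has exactly the same edges as $G_F$: the forward implication uses $\psi_{d'_k}(F)\subset\varphi_{d_k}(F_{i_k})$, and the reverse uses the finite-union expansion $\varphi_{d_k}(F_{i_k})=\bigcup_{d'\in S_{(d_k,i_k)}}\psi_{d'}(F)$ together with the fact that a nonempty intersection of two finite unions forces some pair of members to intersect. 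Hence $\C(H)=\C(G_F)$.

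Finally I would assemble the pieces. The sets $\D^*_1,\dots,\D^*_M$ partition $\D^*$; each $\D^*_j$ is connected in $\D^*$ (chaining inside a block, then hopping to an $H$-adjacent block, along a path in $\c_j$); and two different $\D^*_j$'s cannot be amalgamated, because an intersection $\psi_{d'_1}(F)\cap\psi_{d'_2}(F)\ne\varnothing$ with $d'_1,d'_2$ in distinct blocks makes the corresponding vertices $H$-adjacent, hence members of the same $\c_j$. So $\{\D^*_j\}_{j=1}^M$ is precisely $\C(\D^*)$, giving $\#\C(\D^*)=\#\C(G_F)=M$, and the displayed identity follows from the key computation above. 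I expect no serious obstacle here; the one point requiring care is the combinatorial bookkeeping when translating paths — in $G_F$ and in the components of $\D$ — into connecting chains in $\D^*$ and back, but once the identity $\psi_{Nd+e}=\varphi_d\circ\varphi_e$ and the injectivity of the $\varphi_d$ are fixed, the rest is formal.
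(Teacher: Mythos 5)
Your proposal is correct and follows essentially the same route as the paper: both rest on the identity $\varphi_d(F_i)=\bigcup_{d'\in Nd+\D_i}\psi_{d'}(F)$, the connectedness of each block $Nd+\D_i$ in $\D^*$, and the transfer of intersection chains between $G_F$ and $\D^*$ in both directions. Your auxiliary graph $H$ is just a tidier packaging of the two inequalities $\#\C(\D^*)\leqslant\#\C(G_F)$ and $\#\C(\D^*)\geqslant\#\C(G_F)$ that the paper proves directly, so no substantive difference.
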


Notice that $F^*$ is obtained by a geometric iterated construction different from the one generating $F$, although as the limit set $F^*$ coincide with $F$. This leads to the adoption of above new notations such as $\D^*, \D^*_j, F^*, F^*_j$, etc. 


\noindent\textbf{Construction of the graph $G'_F$}. The vertex set of $G'_F$ is $\{\langle d,j \rangle: d \in \D, 1\leqslant j \leqslant M\}$. Here we use $\langle\cdot,\cdot\rangle$ only to distinguish it from the notation $(\cdot,\cdot)$ in the construction of $G_F$. Further, there exists an edge joining $\langle d_1, j_1 \rangle$ and $\langle d_2, j_2 \rangle$ if and only if $\varphi_{d_1}(F^*_{j_1}) \cap \varphi_{d_2}(F^*_{j_2}) \neq \varnothing$. The graph $G'_F$ is called \emph{the level-2 graph of $F$} (see again Example~\ref{exa:last} for an illustration).

With the aid of graphs $G_F$ and $G'_F$, we can present a complete characterization of fractal squares with finitely many connected components.

\begin{theorem}\label{thm:main2}
  A disconnected fractal square $F$ has finitely many connected components if and only if $\#\C(G_F)=\#\C(G'_F)$. Further, in the case that $\#\C(G_F)=\#\C(G'_F)$, $\#\C(F)$ equals this common value.
\end{theorem}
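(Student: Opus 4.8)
\emph{Overview.} I would recognise $\#\C(G_F)$ and $\#\C(G'_F)$ as the numbers of connected components of $F$ regarded as a fractal square with base $N^2$, respectively $N^3$, and then combine Theorem~\ref{thm:main1} with a monotonicity/stabilisation analysis. For $n\geqslant1$ set $\D^{(n)}=\{d_0+Nd_1+\cdots+N^{n-1}d_{n-1}:d_0,\ldots,d_{n-1}\in\D\}\subset\{0,\ldots,N^n-1\}^2$, so $F=F(N^n,\D^{(n)})$, $\D^{(1)}=\D$, $\D^{(2)}=N\D+\D=\D^*$. The computation behind Lemma~\ref{lem:3} — fixing the outermost digit $d$ and a connected component $\D_i$ of the remaining digits produces a partition of $\D^{(n+1)}$ into sets each connected in $\D^{(n+1)}$, so the associated graph is the quotient of the $\D^{(n+1)}$-adjacency graph by that partition — gives $\#\C(G_F)=\#\C(\D^{(2)})$ and, repeated one level up with $\{F^*_j\}$ in place of $\{F_i\}$, $\#\C(G'_F)=\#\C(\D^{(3)})$. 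Write $r_n:=\#\C(\D^{(n)})$ and let $\mathcal Q_n$ be the clopen partition of $F$ into the unions of level-$n$ cells over connected components of $\D^{(n)}$; then $\mathcal Q_1=\{F_i\}$, $\mathcal Q_2=\{F^*_j\}$ is read off $\C(G_F)$, $\mathcal Q_3$ is read off $\C(G'_F)$, and $\#\mathcal Q_n=r_n$. Dropping the innermost digit gives a map $\D^{(n+1)}\to\D^{(n)}$ sending each level-$(n+1)$ cell into a level-$n$ cell, hence inducing a surjection of $\D^{(n+1)}$-components onto $\D^{(n)}$-components; so $\mathcal Q_{n+1}$ refines $\mathcal Q_n$, $(r_n)$ is non-decreasing, and $r_n\leqslant\#\C(F)$ for all $n$ (the level-$n$ form of Lemma~\ref{lem:2-2}). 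Crucially, $\mathcal Q_{n+1}$ depends on $\mathcal Q_n$ alone: it is obtained by grouping the pieces $\{\varphi_d(P):d\in\D,\ P\in\mathcal Q_n\}$ according to non-empty intersection.

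\emph{The ``if'' direction and the last assertion.} Suppose $\#\C(G_F)=\#\C(G'_F)$, i.e. $r_2=r_3$. Then the surjection $\C(\D^{(3)})\to\C(\D^{(2)})$ above is bijective; since every part of $\mathcal Q_3$ lies inside a part of $\mathcal Q_2$, counting forces $\mathcal Q_2=\mathcal Q_3$ as partitions. Because $\mathcal Q_{n+1}$ is a function of $\mathcal Q_n$, this gives $\mathcal Q_4=\mathcal Q_3=\mathcal Q_2$, so $r_4=r_2$. Now apply Theorem~\ref{thm:main1} to the fractal square $F=F(N^2,\D^{(2)})$: since $\#\C(\D^{(2)})=r_2$ and (by Lemma~\ref{lem:3} in base $N^2$) the level-$1$ graph of $F$ in base $N^2$ has $\#\C(\D^{(4)})=r_4$ connected components, it yields $\#\C(F)=r_2\iff r_2=r_4$; with $r_2=r_4$ we conclude $\#\C(F)=r_2$, which is finite and equals the common value $\#\C(G_F)=\#\C(G'_F)$.

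\emph{The ``only if'' direction.} Assume $\#\C(F)<\infty$; one must show $r_2=r_3$. The sequence $(r_n)$ is non-decreasing and bounded, hence eventually constant, and at the first stabilisation index $n_0$ one has $r_{n_0}=r_{2n_0}$, whence Theorem~\ref{thm:main1} in base $N^{n_0}$ gives $\#\C(F)=r_{n_0}$ and $\mathcal Q_{n_0}=\C(F)$. The task is to push this down, i.e. to show $n_0\leqslant2$; equivalently, that $r_2<r_3$ forces $\#\C(F)=\infty$. I would prove the self-replication statement: if some part $F^*_{j_0}$ of $\mathcal Q_2$ splits into $q\geqslant2$ clopen pieces of $\mathcal Q_3$, then $r_k<r_{k+1}$ for every $k\geqslant2$, so $r_n\to\infty$. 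The mechanism is that $F^*_{j_0}$, being a union over a $G_F$-component, contains inside each level-$1$ square it meets a rescaled copy of some $F^*_j$, in particular of $F^*_{j_0}$ itself; following one such sub-copy and iterating produces a Cantor tree of pairwise disjoint rescaled copies of $F^*_{j_0}$, each carrying its own clopen splitting, and one checks — using precisely the adjacency data recorded by $G_F$ and $G'_F$ — that these splittings survive, i.e. are not merged by neighbouring cells, so $F$ has uncountably many connected components, contradicting finiteness. Hence $r_2=r_3$, and the ``if'' direction then gives $\#\C(F)=r_2$.

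\emph{Main obstacle.} The genuinely delicate step is the survival claim in the self-replication argument: each level-$n$ cell is closed but not open in $F$, so the internal disconnectedness of a rescaled copy of $F^*_{j_0}$ could a priori be destroyed by an adjacent cell. One must therefore keep careful track of which sub-copies stay genuinely isolated from the remainder of their part and verify that infinitely many do — this is essentially the engine behind the paper's ``finitely or uncountably many components'' dichotomy. The remaining ingredients (the block-quotient identification of $\#\C(G_F)$ and $\#\C(G'_F)$, the bijection/fixed-point argument, and the reduction to $F(N^2,\D^{(2)})$ via Theorem~\ref{thm:main1}) are routine.
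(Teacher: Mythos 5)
Your ``if'' direction is correct and takes a genuinely different route from the paper. You identify $\#\C(G_F)=\#\C(\D^{(2)})$ and $\#\C(G'_F)=\#\C(\D^{(3)})$ (the latter is not stated in the paper but follows by repeating the proof of Lemma~\ref{lem:3} one level up), observe that $r_2=r_3$ forces $\mathcal Q_2=\mathcal Q_3$ and hence, since $\mathcal Q_{n+1}$ is a function of $\mathcal Q_n$, that $r_4=r_2$, and then apply Theorem~\ref{thm:main1} to $F=F(N^2,\D^{(2)})$. The paper instead proves Proposition~\ref{prop:last1} by a direct induction showing $\#\C(Q_{n+1})\leqslant M$ for all $n$ and finishing with Lemma~\ref{lem:Mun}; your fixed-point formulation is arguably cleaner and makes the mechanism more transparent. (Minor point to record: Theorem~\ref{thm:main1} needs $\#\C(\D^{(2)})\geqslant 2$, which holds because $F$ is disconnected.)

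The ``only if'' direction, however, contains a genuine gap, and you have flagged it yourself: the ``survival'' step is the entire content of this half of the theorem, not a technicality. Two concrete problems. First, the seed of your self-replication argument is unjustified: inside a level-$1$ square $\varphi_d([0,1]^2)$ the set $F^*_{j_0}$ equals $\varphi_d\bigl(\bigcup_{i:(d,i)\in\c_{j_0}}F_i\bigr)$, a union of rescaled copies of various $F_i$ (equivalently of various $F^*_j$), and there is no reason a copy of $F^*_{j_0}$ itself should occur among them. Second, and more seriously, the statement ``$r_2<r_3$ implies $r_k<r_{k+1}$ for all $k$'' cannot follow from the $\mathcal Q_n$-formalism alone: nothing in that formalism excludes $r_2<r_3=r_4=\cdots$, and if that scenario occurred your own stabilisation argument run at base $N^3$ would yield $\#\C(F)=r_3<\infty$ with $\#\C(G_F)\neq\#\C(G'_F)$, i.e.\ a counterexample to the theorem. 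Ruling this out requires planar geometry, which is exactly what the paper supplies: Proposition~\ref{prop:verhor} (finiteness forces $F$ to be vertical- or horizontal-like), Corollary~\ref{cor:cooffver} (every component joins the bottom edge to the top), Proposition~\ref{prop:intersect} ($F_1\cap(F_1+(0,1))\neq\varnothing$ and likewise for $F_m$), and the pillar analysis of Proposition~\ref{prop:mindiscon}, Lemma~\ref{lem:4} and Corollary~\ref{cor:last}, which show that over every pillar $P$ both $G_F[P]$ and $G'_F[P]$ have exactly the two components $L,R$ and that the corresponding level-$1$ and level-$2$ unions coincide as subsets of $F$; Proposition~\ref{prop:last} then chains pillars together to conclude $\#\C(G'_F)=\#\C(G_F)$. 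None of this geometric input appears in your sketch, so the ``only if'' half is not established.
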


As for the cardinality of $\C(F)$, we have the following result.
\begin{theorem}\label{thm:main3}
For any fractal square $F$, $\C(F)$ is either a finite or an uncountable set.	
\end{theorem}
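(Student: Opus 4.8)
The plan is to prove the sharper statement: if $\#\C(F)$ is infinite, then it has cardinality $2^{\aleph_0}$. As $F$ is a compact metric space, $\#\C(F)\leqslant 2^{\aleph_0}$ always, so it suffices to build an injection $\{0,1\}^{\mathbb N}\hookrightarrow\C(F)$. If $F$ is connected there is nothing to prove; assume $\#\C(\D)\geqslant 2$. For $n\geqslant 1$ set $\D^{[n]}:=\D+N\D+\cdots+N^{n-1}\D$, so that $F=F(N^{n},\D^{[n]})$, and let $G_F^{[n]}$ be its level-$1$ graph; then $G_F^{[1]}=G_F$ and, by Lemma~\ref{lem:3}, $G_F^{[2]}=G'_F$. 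The connected components (``clusters'') of $G_F^{[n]}$ are finitely many sets partitioning $F$ into relatively clopen pieces, and — since each level-$(n+1)$ cell lies in a unique level-$n$ cell and adjacencies propagate upward — this partition refines the one from $G_F^{[n-1]}$. Hence $a_n:=\#\C\big(G_F^{[n]}\big)$ is non-decreasing, and $a_n\leqslant\#\C(F)$ for all $n$ by the $N^n$-analogue of Lemma~\ref{lem:2-2}. I will also use that a connected component of $F$, being connected, lies in a single cluster of $G_F^{[n]}$ for every $n$; so two cells in different clusters of some $G_F^{[n]}$ lie in different components of $F$.

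The first step is a dichotomy. If $(a_n)$ is bounded it is eventually constant, say $a_n=K$ for $n\geqslant n_0$; then $a_{n_0}=a_{2n_0}$, which is exactly the hypothesis of Theorem~\ref{thm:main2} applied to the fractal square $F=F(N^{n_0},\D^{[n_0]})$, whose level-$1$ and level-$2$ graphs are $G_F^{[n_0]}$ and $G_F^{[2n_0]}$; Theorem~\ref{thm:main2} then yields $\#\C(F)=K<\infty$. Therefore, if $\#\C(F)$ is infinite then $a_n\to\infty$; equivalently, applying Theorem~\ref{thm:main2} at every scale, $a_n<a_{2n}$ for all $n\geqslant 1$ — refining the mesh always creates a genuinely new cluster.

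Assume from now on that $\#\C(F)$ is infinite. I would aim to construct clusters $U_s$ ($s\in\{0,1\}^{*}$) together with levels $\ell_0<\ell_1<\ell_2<\cdots$ such that $U_s\in\C\big(G_F^{[\ell_{|s|}]}\big)$ and $U_{s0},U_{s1}$ are \emph{distinct} clusters both contained in $U_s$ — a copy of the infinite binary tree embedded, compatibly with refinement, into the cluster hierarchy. Granting this, for $\sigma\in\{0,1\}^{\mathbb N}$ the set $K_\sigma:=\bigcap_k U_{\sigma|k}$ is nonempty (a decreasing intersection of nonempty compacta), hence meets some component $C_\sigma$ of $F$, and $C_\sigma\subseteq U_{\sigma|k}$ for every $k$ because $C_\sigma$ is connected and $U_{\sigma|k}$ is clopen. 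If $\sigma\neq\sigma'$, then at the first $k$ with $U_{\sigma|k}\neq U_{\sigma'|k}$ these two clusters are disjoint, forcing $C_\sigma$ and $C_{\sigma'}$ to be distinct. Hence $\sigma\mapsto C_\sigma$ is injective, $\#\C(F)\geqslant 2^{\aleph_0}$, and with the dichotomy this proves the theorem.

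The main obstacle is building that binary tree out of ``$a_n\to\infty$''. It cannot be done by pure tree combinatorics: a finitely branching tree with $a_n\to\infty$ can still have only countably many branches (e.g.\ a single ray emitting one further ray at each vertex), so self-similarity must be used, and this is where the finite combinatorial data behind Theorem~\ref{thm:main2} should enter. The property I would try to establish is that a cluster with infinitely many components cannot be refined indefinitely into one such cluster plus clusters of finite component count, but must eventually split into \emph{two} clusters again having infinitely many components; since only finitely many local cluster-configurations should govern how a cluster refines and re-clusters (the same finiteness that makes ``stable from level $1$ to level $2$'' propagate to ``stable forever'' in Theorem~\ref{thm:main2}), such a configuration, being recurrent, reproduces itself at least twice along some finite chain of refinements, and iterating yields the embedded binary tree. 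Pinning down the right notion of ``cluster-configuration'' and proving this reproduction property — essentially the whole content — is the crux; the auxiliary facts used above (clusters are clopen, refine, and separate distinct components of $F$, the last because components and quasicomponents coincide in the compact set $F$) are routine.
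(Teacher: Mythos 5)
The dichotomy half of your plan is essentially sound, modulo one inaccuracy: $G_F^{[2]}$ is \emph{not} the level-2 graph $G'_F$. The pieces of $G'_F$ are $\varphi_d(F^*_j)$ with $d\in\D$ a level-1 digit, whereas the level-1 graph of $F(N^2,\D^*)$ has pieces $\psi_{d'}(F^*_j)$ with $d'\in\D^*$; Lemma~\ref{lem:3} identifies $\#\C(\D^*)$ with $\#\C(G_F)$, not the two graphs. The step can be repaired, since each piece $\psi_{Nd+e}(F^*_{j'})$ lies in a unique piece $\varphi_d(F^*_j)$, so the $G_F^{[2]}$-clustering refines the $G'_F$-clustering and $\#\C(G_F^{[2]})\geqslant\#\C(G'_F)\geqslant\#\C(G_F)$; hence $a_{n_0}=a_{2n_0}$ does force the hypothesis of Theorem~\ref{thm:main2} at scale $N^{n_0}$. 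The genuine gap is the other half: the claim that $a_n\to\infty$ forces $\C(F)$ to be uncountable. This is where the whole content of Theorem~\ref{thm:main3} sits, and your proposal does not prove it. The embedded binary tree is only something you ``would aim to construct''; the ``reproduction property'' of cluster-configurations is never formulated precisely, let alone proved, and you yourself point out that unbounded cluster counts alone cannot produce a binary tree (a single ray emitting one new cluster per level has countably many branches), so some genuine input from self-similarity is indispensable and is not supplied. As written, the argument reduces the theorem to an unproved assertion essentially equivalent to it.

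For comparison, the paper's uncountability mechanism is quite different and more concrete, and it is probably the most realistic way to close your gap. It equips $\C(F)$ with the Hausdorff metric and shows that if some component is of \emph{corner type} (contained in one of the four corner squares of side $(N-1)/N$), then every component of $F$ is approximated arbitrarily well by small similar copies of a corner-type component, each of which is itself a component; thus $\C(F)$ is perfect and hence uncountable (Proposition~\ref{prop:cortype}). In the complementary case, where every component is vertical-like (after Remark~\ref{rem:2} and Proposition~\ref{prop:verhor} one may reduce to this, up to symmetry), a pillar-counting argument (Corollaries~\ref{cor:ls1} and \ref{cor:ls2}) shows that at most two components of $F$ meet $\bigcup_{d\in P}\varphi_d(F)$ for each pillar $P$, so an at most countable $\C(F)$ is in fact finite. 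No tree construction or recurrence of graph configurations is needed; if you wish to keep your graph-theoretic framing for the finite case, you should still import this corner-type/vertical-like dichotomy to handle the infinite case.
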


This paper is organized as follows. In Section 2 we prove Theorem~\ref{thm:main1} and construct a class of fractal squares with exactly $m \geqslant 2$ connected components. In Section 3 we obtain deeper information on the level-1 graph. In Section 4 we prove Theorem~\ref{thm:main2}. The proof of Theorem~\ref{thm:main3} and some further remarks are added in Section 5.

\section{Proof of Theorem \ref{thm:main1}}

We start with giving an example of the level-1 graph.

\begin{exa}\label{exa:1}
Let $F = (F+\D)/5$ be the fractal square shown in Figure~\ref{exa:1f1}.
\begin{figure}[htbp]
\centering\includegraphics[scale=0.4]{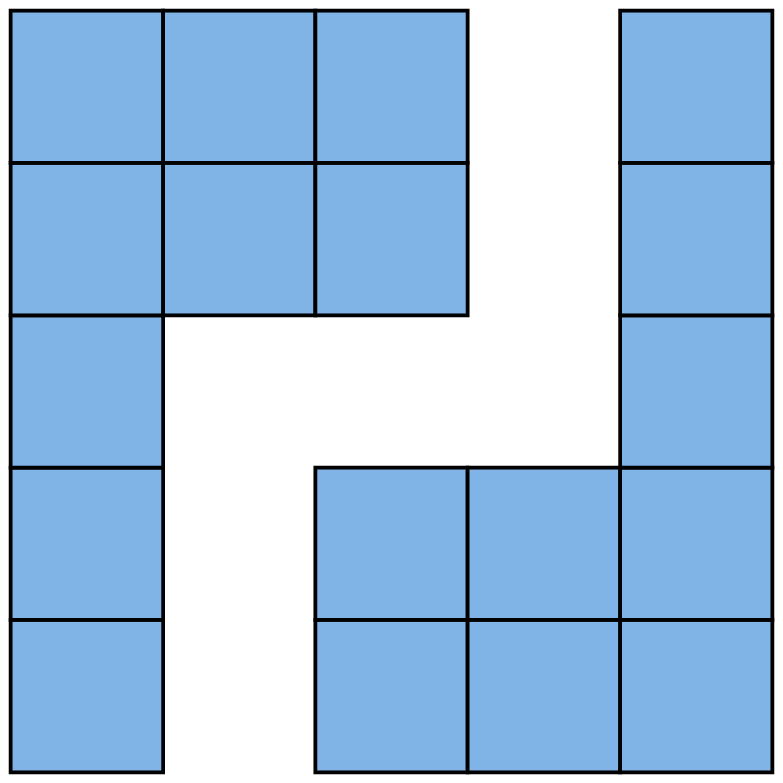} \quad \includegraphics[scale=0.4]{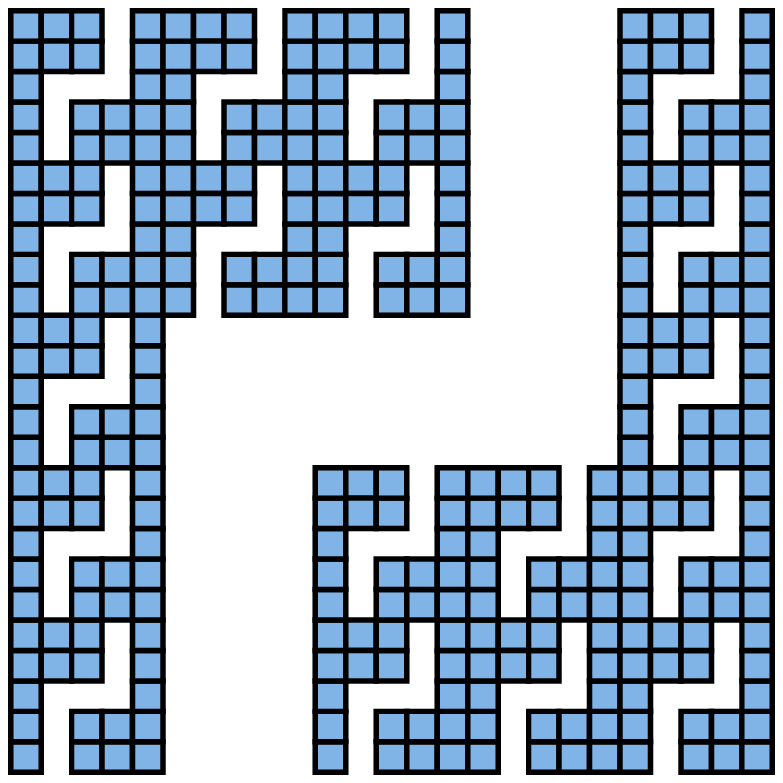} \quad \includegraphics[scale=0.38]{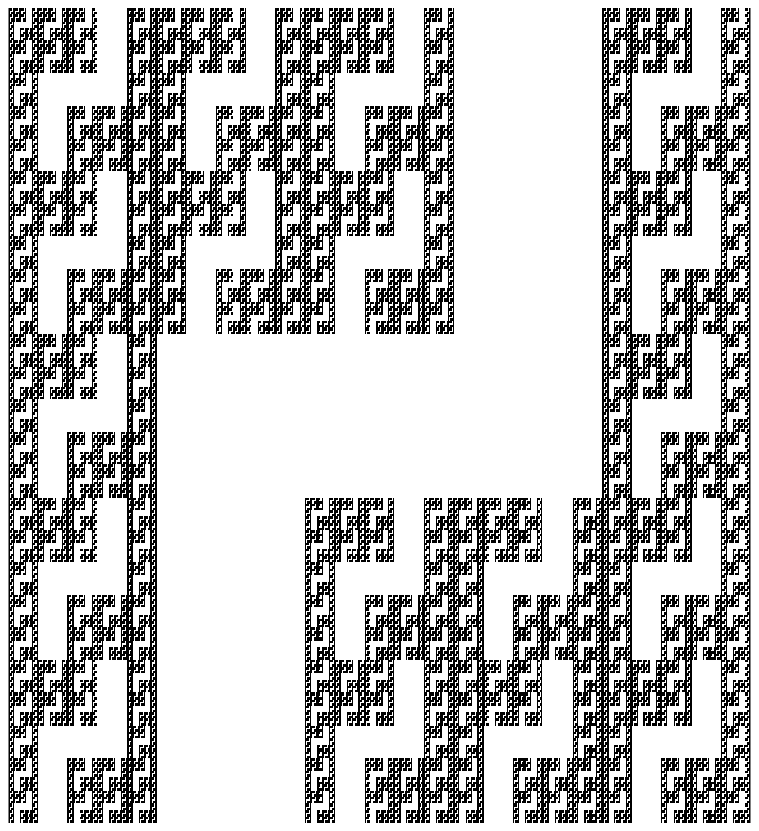}
\vspace{-0.5cm}
\caption{$Q_1, Q_2$ and $F$, where $\#\C(G_F)=\#\C(\D)=2$.}\label{exa:1f1}
\end{figure}
Note that here $\#\C(\D)=2$. Let $\D_1 = \{(0, i): 0 \leqslant i \leqslant 4\} \cup \{(1,3), (1,4), (2,3), (2,4)\}$
 and $\D_2=\D \setminus \D_1$. By definition, we can draw its level-1 graph $G_F$ as in Figure~\ref{fig:exa1G_F}.
\begin{figure}[htbp]
\centering\includegraphics[scale=0.43]{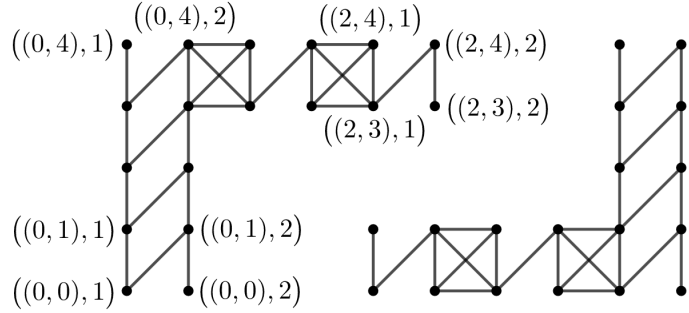}
\vspace{-1cm}
\caption{The level-1 graph of $F$ in Eaxmple~\ref{exa:1}.}\label{fig:exa1G_F}
\end{figure}
\end{exa}


The following observation is straightforward.
\begin{lemma}\label{lem:2-2}
  $\#\C(F)\geqslant \#\C(G_F) \geqslant \#\C(\D)$.
\end{lemma}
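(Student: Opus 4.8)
The plan is to establish the two inequalities $\#\C(F)\geqslant\#\C(G_F)$ and $\#\C(G_F)\geqslant\#\C(\D)$ separately, both via the same elementary principle: a compact set written as a finite union of pairwise disjoint closed pieces is thereby written as a union of relatively clopen pieces, so every connected component is swallowed by exactly one piece.

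For the second inequality, I would introduce the projection $\pi\colon V(G_F)\to\C(\D)$ sending a vertex $(d,i)$ to the connected component of $\D$ that contains $d$. If $(d_1,i_1)$ and $(d_2,i_2)$ are joined by an edge, then $\varphi_{d_1}(F_{i_1})\cap\varphi_{d_2}(F_{i_2})\neq\varnothing$; since $F_{i_1}\subseteq F$ and $F_{i_2}\subseteq F$, this forces $\varphi_{d_1}(F)\cap\varphi_{d_2}(F)\neq\varnothing$, so $d_1$ and $d_2$ lie in the same connected component of $\D$, i.e.\ $\pi(d_1,i_1)=\pi(d_2,i_2)$. Hence $\pi$ is constant along edges, thus constant on each connected component of $G_F$, and it descends to a map $\C(G_F)\to\C(\D)$. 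This descended map is surjective, because $\pi(d,i)=\D_k$ whenever $d\in\D_k$, so every component of $\D$ is attained. Therefore $\#\C(G_F)\geqslant\#\C(\D)$.

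For the first inequality, write $M=\#\C(G_F)$ and $\C(G_F)=\{\c_1,\ldots,\c_M\}$, and set $K_j=\bigcup_{(d,i)\in\c_j}\varphi_d(F_i)$ for $1\leqslant j\leqslant M$. Each $K_j$ is a finite union of nonempty compact sets, hence nonempty and compact. Using $\varphi_d(F)=\bigcup_{i=1}^m\varphi_d(F_i)$ and $F=\bigcup_{d\in\D}\varphi_d(F)$, the union $\bigcup_{j=1}^M K_j$ ranges over all vertices of $G_F$ and equals $F$. The $K_j$ are pairwise disjoint: if $j\neq j'$, then no edge joins a vertex of $\c_j$ to a vertex of $\c_{j'}$, so by the edge rule $\varphi_d(F_i)\cap\varphi_{d'}(F_{i'})=\varnothing$ for every $(d,i)\in\c_j$ and $(d',i')\in\c_{j'}$. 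Consequently each $K_j$ is closed in $F$, and its complement $\bigcup_{j'\neq j}K_{j'}$ is closed as well, so $K_j$ is relatively clopen in $F$. Any connected component $C$ of $F$ is partitioned by the clopen sets $C\cap K_j$, hence is contained in exactly one $K_j$; since each $K_j$ is nonempty, the resulting assignment $\C(F)\to\{1,\ldots,M\}$ is surjective, and so $\#\C(F)\geqslant M=\#\C(G_F)$.

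I do not expect a serious obstacle — this statement is genuinely a warm-up — but the point that must be handled with care is the use of the ``if and only if'' in the definition of the edges of $G_F$: it is precisely the \emph{absence} of an edge between distinct components that guarantees disjointness of the pieces $K_j$. I would also emphasize that neither the $F_i$ nor the $K_j$ need to be connected for this argument; only the clopen-partition structure is used, which is what keeps the proof short. The finer question of when these two inequalities are in fact equalities is exactly what Theorems~\ref{thm:main1} and \ref{thm:main2} are designed to answer.
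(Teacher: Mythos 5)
Your proof is correct and follows essentially the same route as the paper's: the second inequality is the paper's observation that vertices over different components of $\D$ cannot be joined (you phrase it as the projection $\pi$ being constant along edges), and the first is the paper's claim that distinct components of $G_F$ yield points in distinct components of $F$, which you justify carefully via the clopen partition $F=\bigsqcup_j K_j$. Your version merely supplies the topological details the paper leaves implicit, so no issues.
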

\begin{proof}
  Notice that if $(d_1, i_1)$ and $(d_2,i_2)$ belong to different connected components of $G_F$, then $x$ and $y$ belong to different connected component of $F$ for all $x\in \varphi_{d_1}(F_{i_1})$ and $y\in \varphi_{d_2}(F_{i_2})$. This implies that $\#\C(F)\geqslant \#\C(G_F)$.

  Moreover, by the definition of the connectedness in $\D$ and our construction of $G_F$, if $d'$ and $d''$ belong to different connected components of $\D$, say $d' \in \D_1$ and $d'' \in \D_2$ for instance, then $\big(\bigcup_{d \in \D_1} \varphi_d(F)\big) \cap \big((\bigcup_{d \in \D_2} \varphi_d(F) \big) = \varnothing$. So $\big( \bigcup_{d \in \D_1}\bigcup_{i=1}^m \varphi_d(F_i)\big) \cap \big( \bigcup_{d \in \D_2}\bigcup_{i=1}^m \varphi_d(F_i)\big) = \varnothing$. It follows that $(d',i')$ and $(d'',i'')$ must lie in different connected components of $G_F$ for every $1 \leqslant i',i'' \leqslant m$. Hence $\#\C(G_F) \geqslant \#\C(\D)$.
\end{proof}

\begin{coro}\label{cor:ejfij}
Suppose $\C(G_F)=\{\c_1,\ldots,\c_M\}$. Then for any $1 \leqslant j \leqslant M$, there exists a unique $1 \leqslant i_j \leqslant m$ such that $F^*_j \subset F_{i_j}$.	
\end{coro}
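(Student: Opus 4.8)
The plan is to trace how the first coordinate of a vertex of $G_F$ records which connected component of $\D$ (equivalently, which piece $F_i$) the associated cylinder sits in, and then to show this label stays constant along edges, hence along each $\c_j$.

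First I would recall from Lemma~\ref{lem:3} that $F^*_j = \bigcup_{(d,i) \in \c_j} \varphi_d(F_i)$. Since the connected components $\D_1,\ldots,\D_m$ partition $\D$, each $d \in \D$ lies in a unique $\D_{k(d)}$, and then $\varphi_d(F_i) \subset \varphi_d(F) \subset F_{k(d)}$ for every $1 \leqslant i \leqslant m$ by the definition \eqref{eq:Fi} of $F_{k(d)}$.

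Next I would check that the map $(d,i) \mapsto k(d)$ is constant on each connected component $\c_j$ of $G_F$. Suppose $(d_1,i_1)$ and $(d_2,i_2)$ are joined by an edge, i.e.\ $\varphi_{d_1}(F_{i_1}) \cap \varphi_{d_2}(F_{i_2}) \neq \varnothing$. By the previous paragraph this nonempty intersection is contained in $F_{k(d_1)} \cap F_{k(d_2)}$; since distinct pieces $F_i$ are disjoint, we get $k(d_1) = k(d_2)$. Propagating this equality along any path in $\c_j$ produces a single index $i_j := k(d)$ (for any, hence every, $(d,i)\in\c_j$) with $\varphi_d(F_i) \subset F_{i_j}$ whenever $(d,i) \in \c_j$. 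Taking the union over $(d,i) \in \c_j$ gives $F^*_j \subset F_{i_j}$. Uniqueness is then immediate: by Lemma~\ref{lem:3} the set $\D^*_j$ is nonempty, so $F^*_j \neq \varnothing$, and if $F^*_j$ were also contained in some $F_{i'_j}$ with $i'_j \neq i_j$ we would contradict $F_{i_j} \cap F_{i'_j} = \varnothing$.

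I do not expect a genuine obstacle here; the only point needing a little care is the constancy-along-edges step, which rests entirely on the disjointness $F_i \cap F_{i'} = \varnothing$ ($i \neq i'$) already recorded in the excerpt. As an alternative route, one could instead note that $F^*_j = \bigcup_{d' \in \D^*_j}\psi_{d'}(F^*)$ is connected (because $\D^*_j$ is a connected component of $\D^*$, by the classical criterion applied to the system $\{\psi_{d'}\}$), that $F = \bigsqcup_{i=1}^m F_i$ is a partition of $F$ into relatively clopen subsets, and hence that the connected set $F^*_j$ must be contained in exactly one $F_{i_j}$.
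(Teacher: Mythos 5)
Your main argument is correct and is essentially the paper's proof: the paper gets the constancy of the $\D$-component label along edges of $G_F$ by citing the proof of Lemma~\ref{lem:2-2} (which rests on the same disjointness $F_i \cap F_{i'} = \varnothing$ that you invoke), and then combines it with \eqref{eq:ej} exactly as you do, so the edge-by-edge propagation and the uniqueness step match the intended reasoning. One caution about your ``alternative route'': it does not work in general, because $F^*_j = \bigcup_{d' \in \D^*_j}\psi_{d'}(F^*)$ need not be connected — the pieces $\psi_{d'}(F^*)$ are scaled copies of $F$, which is disconnected under the standing assumption $\#\C(\D) \geqslant 2$, and a chain-intersecting union of disconnected sets can itself be disconnected (the possible further splitting of the sets $F^*_j$ is precisely why the paper introduces the level-2 graph $G'_F$). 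So keep your first argument and drop the connectedness-plus-clopen-partition variant.
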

\begin{proof}
By the proof of Lemma~\ref{lem:2-2}, if $(d',i')$ and $(d'',i'')$ belong to the same connected component of $G_F$, then $d'$ and $d''$ must belong to the same component of $\D$. Combining this with \eqref{eq:ej} we immediately obtain the desired result.
\end{proof}

We can deduce from the proof of Lemma~\ref{lem:2-2} that if $\#\C(G_F) = \#\C(\D) = m$, then $\{(d,i):d\in \D_j, 1\leqslant i\leqslant m\}$ is a connected component of $G_F$ for each $1\leqslant j\leqslant m$. That is, 
\begin{equation}\label{eq:add1}
	\C(G_F) = \big\{ \{(d, i): d \in \D_1, 1\leqslant i\leqslant m \}, \ldots, \{(d, i): d \in \D_m, 1\leqslant i\leqslant m \} \big\}.
\end{equation}
It is also convenient to denote
\begin{equation}\label{eq:sec1-2}
Q_{n+1}|_{\D_i} := \bigcup_{d \in \D_i} \varphi_d(Q_n), \quad 1\leqslant i\leqslant m, \; n=0,1,2,\ldots.
\end{equation}
Since $\D=\bigcup_{i=1}^n\D_i$ we see that $Q_n = \bigcup_{i=1}^m Q_n|_{\D_i}$. Further, $F_i = \bigcap_{n=1}^\infty Q_n|_{\D_i}$ (recall \eqref{eq:Fi}).

The following result is well-known (see \cite[Exercise 11, Section 26]{Mun}).
\begin{lemma}\label{lem:Mun}
Let $\{A_i\}_{i=1}^\infty$ be a collection of compact and connected subsets of $\R^n$. If $A_{n+1} \subset A_n$ for all $n \in \Z^+$, then $\bigcap_{i=1}^\infty A_i$ is also connected.
\end{lemma}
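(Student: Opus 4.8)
The plan is to argue by contradiction, combining the fact that disjoint compact subsets of $\R^n$ can be separated by disjoint open sets with the finite intersection property. Write $A := \bigcap_{i=1}^\infty A_i$. First I would record that $A$ is nonempty and compact: it is a nested intersection of nonempty compact sets, so Cantor's intersection theorem applies, and an intersection of compact sets is compact. (If one adopts the convention that $\varnothing$ is connected, this preliminary step can be skipped.)

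Next, suppose for contradiction that $A$ is disconnected. Then there is a partition $A = P \cup Q$ with $P$ and $Q$ nonempty, disjoint, and closed in $A$; since $A$ is compact, $P$ and $Q$ are compact subsets of $\R^n$. Because $\R^n$ is normal (equivalently, disjoint compact subsets of a metric space lie at positive distance), I would choose disjoint open sets $U \supseteq P$ and $V \supseteq Q$, so that $A \subseteq U \cup V$.

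Now consider the sets $B_n := A_n \setminus (U \cup V)$ for $n \geqslant 1$. Each $B_n$ is a closed subset of the compact set $A_1$, the sequence decreases with $n$ (since $A_{n+1} \subseteq A_n$), and $\bigcap_{n=1}^\infty B_n = A \setminus (U \cup V) = \varnothing$. By the finite intersection property applied inside $A_1$, some $B_N$ is already empty, i.e. $A_N \subseteq U \cup V$. But $A_N \supseteq A \supseteq P$ forces $A_N \cap U \neq \varnothing$, and likewise $A_N \cap V \neq \varnothing$; together with $U \cap V = \varnothing$ this exhibits a separation of $A_N$ into two nonempty relatively open pieces, contradicting the connectedness of $A_N$. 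Hence $A$ is connected.

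The only point that genuinely uses the hypotheses is the passage from $\bigcap_n B_n = \varnothing$ to $B_N = \varnothing$ for a single finite $N$ — this is exactly where compactness of the $A_n$ enters — together with the separation of $P$ from $Q$, which uses that we are in a metric (hence normal) space. I do not expect any real obstacle, since this is the standard textbook argument; the lemma is stated only because it will be invoked repeatedly when passing from the nested approximations $Q_n|_{\D_i}$ to the limit pieces $F_i$.
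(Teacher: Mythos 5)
Your proof is correct and complete. The paper itself does not prove this lemma at all---it simply cites it as a known result (Munkres, Exercise 11 of Section 26)---and your argument is exactly the standard one intended there: separate the two pieces of a hypothetical disconnection by disjoint open sets (using normality of $\R^n$, or equivalently that disjoint compact sets lie at positive distance), then use the finite intersection property on the nested compact sets $A_n \setminus (U \cup V)$ to force some $A_N \subseteq U \cup V$ with both $A_N \cap U$ and $A_N \cap V$ nonempty, contradicting the connectedness of $A_N$. No gaps.
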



\begin{proof}[Proof of Theorem \ref{thm:main1}]
The ``only if\," part follows directly from Lemma~\ref{lem:2-2}.

Now we prove the ``if\," part. Suppose $\#\C(G_F)=\#\C(\D)=m$. Then we have \eqref{eq:add1}. 
Toward our end, it suffices to show that $F_i$ is connected for all $1 \leqslant i \leqslant m$. Since $F_i = \bigcap_{n=1}^\infty Q_n|_{\D_i}$, by Lemma~\ref{lem:Mun} this is an easily established result as long as $Q_n|_{\D_i}$ is connected for all $n$ and $i$. We shall prove this by induction.

For any $1 \leqslant i \leqslant m$, note that $Q_1|_{\D_i}$ is the union of some squares with side length $1/N$. Then by the connectedness of $\D_i$ we know $Q_1|_{\D_i}$ is path-connected. Suppose $Q_n|_{\D_1}, \ldots, Q_n|_{\D_m}$ are all path-connected sets for some $n \in \Z^+$. Then for any fixed $i$, we first observe that
\begin{equation*} 
Q_{n+1}|_{\D_i} =  \bigcup_{d \in \D_i} \varphi_d(Q_n) = \bigcup_{d \in \D_i}\bigcup_{j=1}^m \varphi_d(Q_n|_{\D_j}), 
\end{equation*}
which is a union of some path-connected sets. The path connectedness of $Q_{n+1}|_{\D_i}$ then follows immediately if we can show that $\varphi_{d'}(Q_n|_{\D_{j'}})$ and $\varphi_{d''}(Q_n|_{\D_{j''}})$ lie in the same path-connected component of $Q_{n+1}$ for any $d', d'' \in \D_i$ and any $j', j'' \in \{1,\ldots,m\}$.

In fact, since $\{(d, j): d \in \D_i, 1\leqslant j \leqslant m\}$ is connected, there exists a sequence $\{(d_k,j_k)\}_{k=1}^p \subset \D_i \times \{1,\ldots,m\}$ such that $(d_1, j_1) = (d', j')$, $(d_p, j_p) = (d'', j'')$, and
\begin{equation*} 
\varphi_{d_k}(F_{j_k}) \cap \varphi_{d_{k+1}}(F_{j_{k+1}}) \neq \varnothing, \quad k=1,2,\ldots,p-1. 
\end{equation*}
Since $F_{j_k} \subset Q_n|_{\D_{j_k}}$ for all $n$ and $k$, we have
\begin{equation*} 
\varphi_{d_k}(Q_n|_{\D_{j_k}}) \cap \varphi_{d_{k+1}}(Q_n|_{\D_{j_{k+1}}}) \neq \varnothing, \quad k=1,2,\ldots,p-1.
\end{equation*}
By our hypothesis, $Q_n|_{\D_{j_k}}$ is path-connected for all $k$. Since the union of two path-connected sets is also path-connected if one intersects another, we see that $\varphi_{d_1}(Q_n|_{\D_{j_1}})$ ($=\varphi_{d'}(Q_n|_{\D_{j'}})$) and $\varphi_{d_2}(Q_n|_{\D_{j_p}})$ ($=\varphi_{d''}(Q_n|_{\D_{j''}})$) lie in the same path-connected component of $Q_{n+1}$. This is all we need.
\end{proof}



By Theorem~\ref{thm:main1}, the fractal square in Example~\ref{exa:1} has exactly two connected components since in that case we have $\#\C(G_F) = \#\C(\D) = 2$. Now we turn to the construction of fractal squares with exactly $m\geqslant 3$ connected components.

\begin{exa}
Let us start with $m=3$ and $m=4$. In Figure~\ref{fig:exa2} we show the first stage in the geometric construction of two fractal squares (i.e., $Q_1$) respectively.
\begin{figure}[htbp]
\centering \includegraphics[scale=0.4]{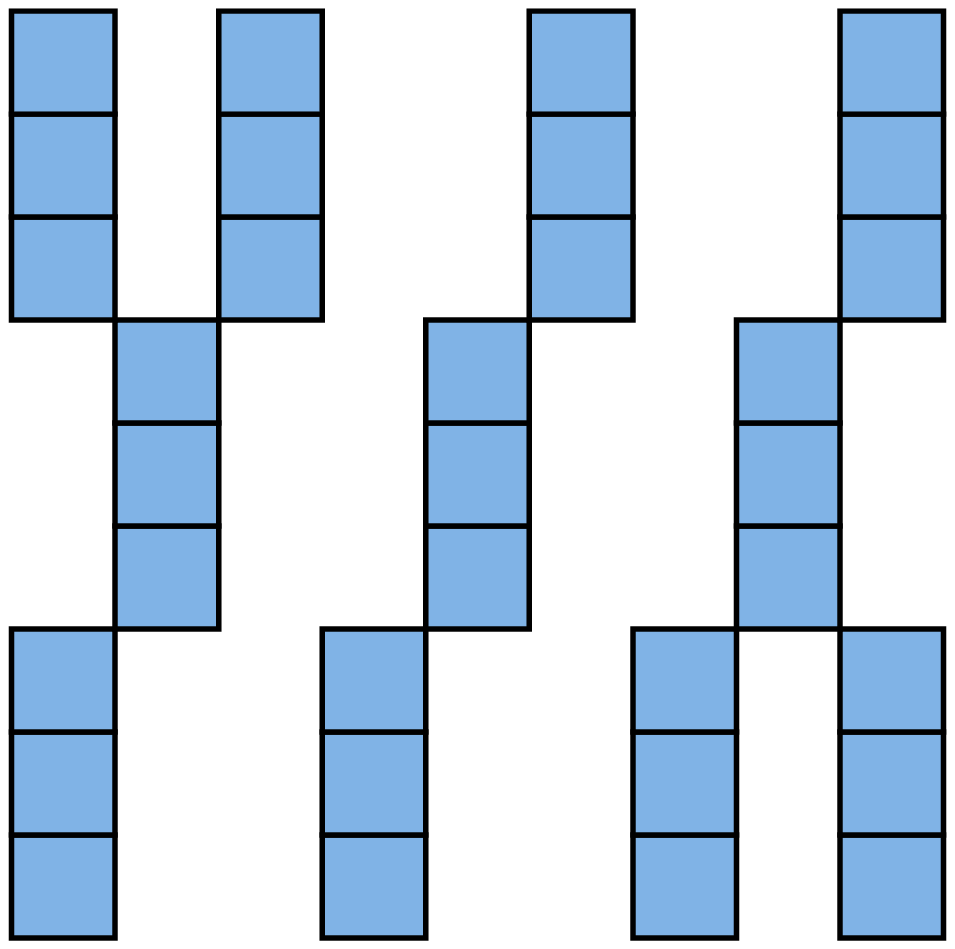} \quad\quad \includegraphics[scale=0.4]{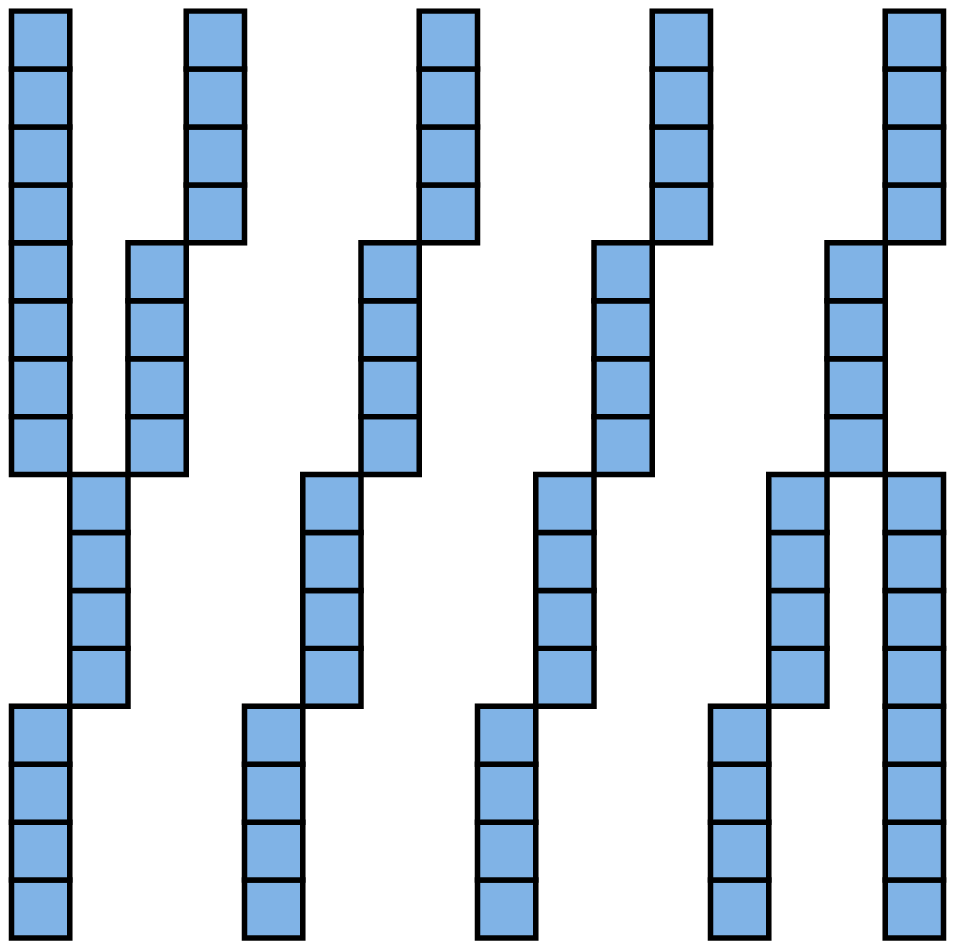} 
\caption{The initial pattern of fractal squares with $\#\C(F)=3$ or $\#\C(F)=4$.}\label{fig:exa2}
\end{figure}
By drawing level-1 graphs we can see that in the former case, $\#\C(G_F)=3$ so that $\#\C(F)=3$, and in the latter case, $\#\C(G_F)=\#\C(F)=4$. We can obtain a fractal square with exactly $m$ connected components for every $m \geqslant 5$ in a similar way. More precisely, first denote
\begin{equation*} 
A := \bigcup_{i=0}^{m-1}\bigcup_{j=im}^{(i+1)m-1} (i, j), \quad B := \bigcup_{j=2m}^{m^2-1} \{(0, j), (m^2-1, m^2-1-j)\}. 
\end{equation*}
Let $\D = B \cup \big( \bigcup_{k=0}^{m-1} (A+(km, 0)) \big)$, then $F=F(m^2,\D)$ is a fractal square with $\#\C(F)=m$ as desired.
\end{exa}

\section{Further study on the level-1 graph}

In this section we try to go further: \textit{Can a fractal square contain more but still finitely many connected components than its corresponding digital set does?} We first give an example of a fractal square with $\#\C(G_F)>\#\C(\D)$, which serves also as an illustration of the level-2 graph.

\begin{exa}\label{exa:last}
Let $F=(F+\D)/10$, where $Q_1, Q_2$ are shown in Figure~\ref{fig:lastexa}.
\begin{figure}[htbp]
\centering \includegraphics[scale=0.6]{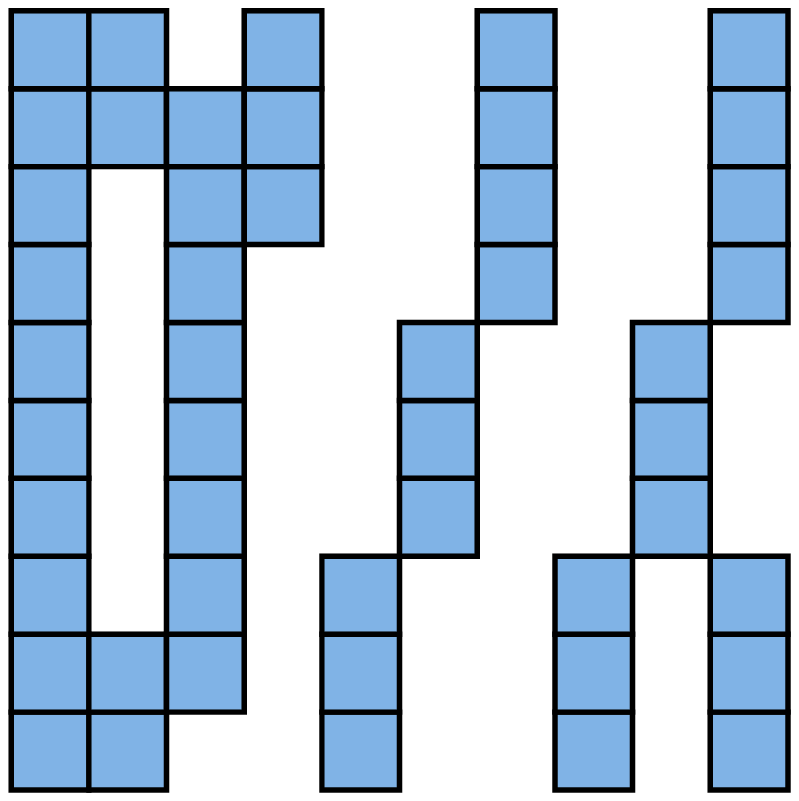}  \quad\quad \includegraphics[scale=0.51]{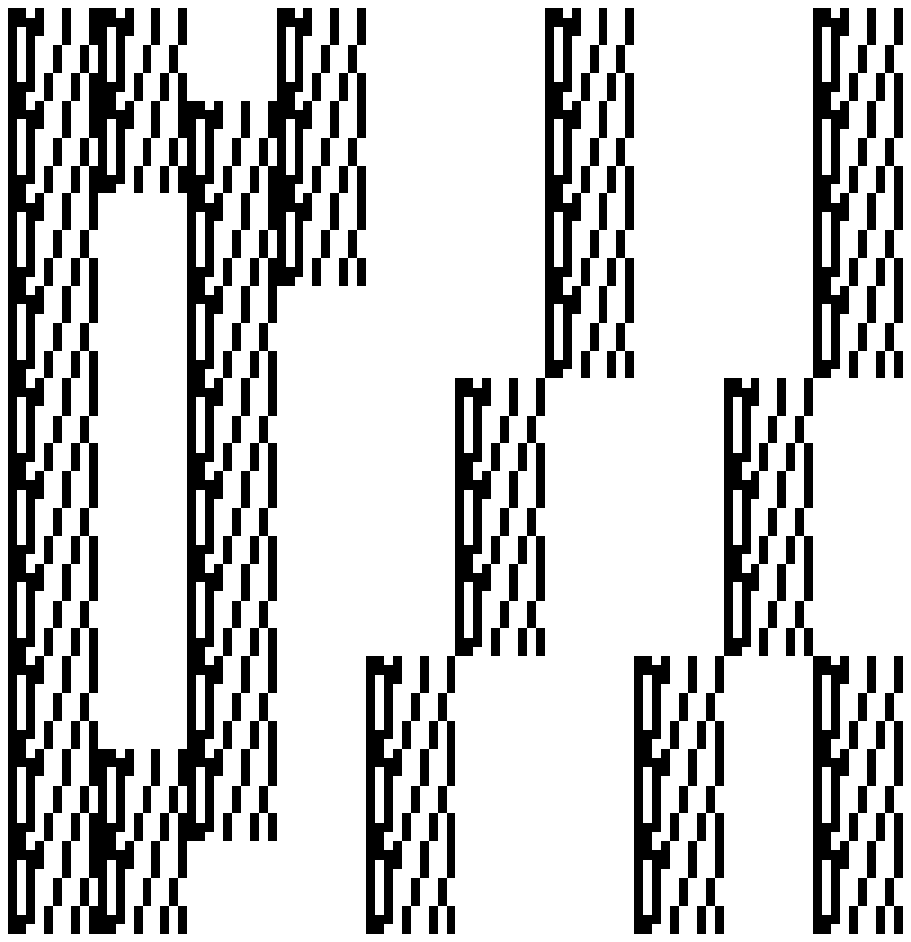}
\caption{$Q_1$ and $Q_2$, where $\#\C(G'_F)=\#\C(G_F)=4$ but $\#\C(\D)=3$.}\label{fig:lastexa} 
\end{figure}
Clearly $\#\C(\D)=3$. Let
\begin{equation*}
\D_1 = \{(i,k) \in \D: 0\leqslant i \leqslant 3\},\,\, \D_2=\{(i,k)\in\D: 4\leqslant i\leqslant 6\},\,\, \D_3=\{(i,k)\in\D: 7\leqslant i\leqslant 9\}. 
\end{equation*}
Note that the leftmost component in $Q_1$ will split into two components in $Q_2$. By definition we can draw $G_F$ and $G'_F$ as in Figure~\ref{fig:lastexag1} and Figure~\ref{fig:lastexag2}. Here $\#\C(G_F) = \#\C(G'_F)=4$ (so by Theorem~\ref{thm:main2}, this fractal square turns out to contain only $4$ connected components).
\begin{figure}[htbp]
\centering \includegraphics[scale=0.26]{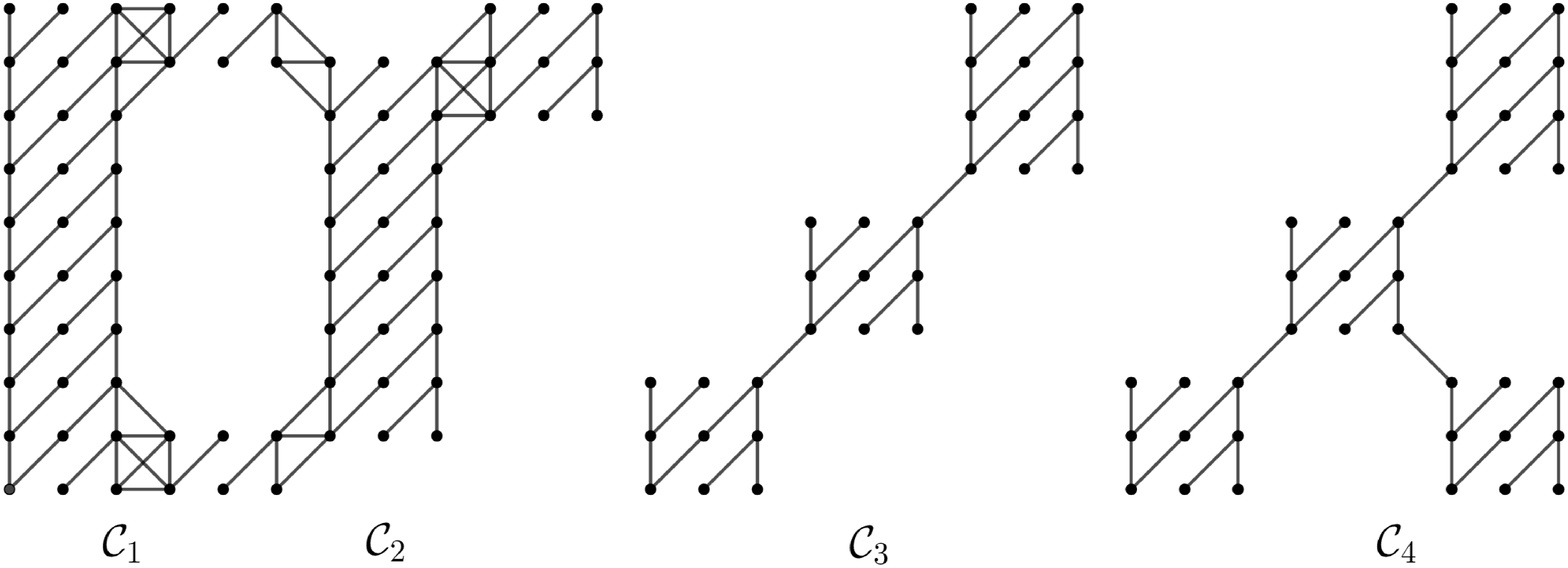}
\vspace{-0.6cm}
\caption{$G_F$ of Example~\ref{exa:last}, where $\#\C(G'_F)=\#\C(G_F)=4$ while $\#\C(\D)=3$.}\label{fig:lastexag1}
\end{figure}
\begin{figure}[htbp]
\centering \includegraphics[scale=0.28]{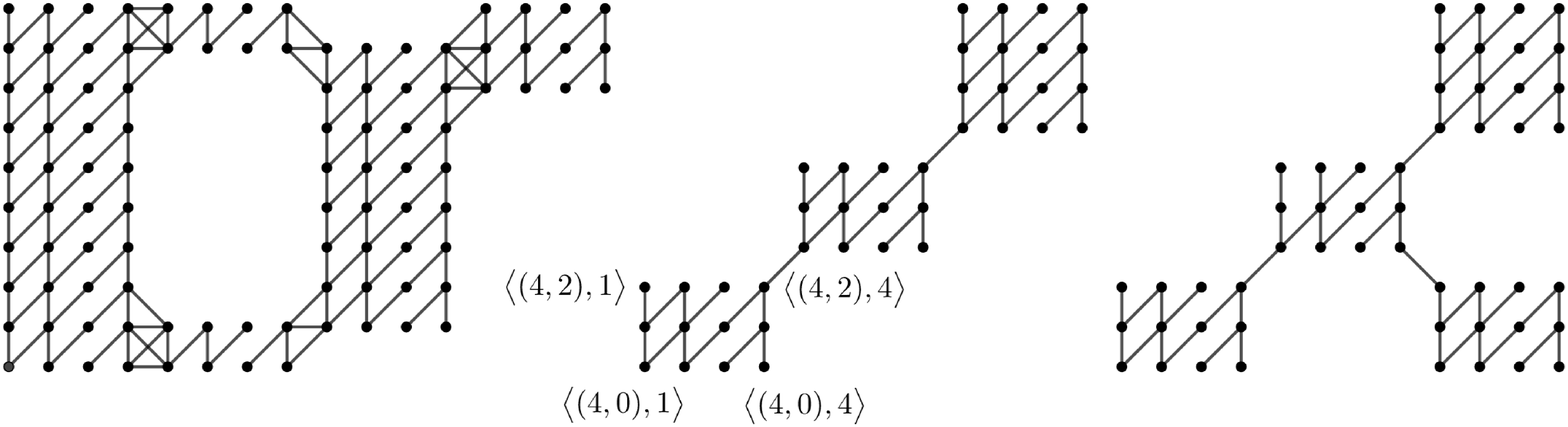}
\vspace{-0.7cm}
\caption{$G'_F$ of Example~\ref{exa:last}, where $\#\C(G'_F)=\#\C(G_F)=4$ while $\#\C(\D)=3$.}\label{fig:lastexag2}
\end{figure}
\end{exa}


Recall that for a disconnected fractal square $F=F(N, \D)$, we always assume $ \C(\D)=\{\D_1, \D_2, \ldots, \D_m\}$ where $m \geqslant 2$. 
\begin{de}
For $1 \leqslant i  \leqslant m$, we call $\D_i$ \emph{vertical-like} if
\begin{equation*} 
(\Z\times\{0\}) \cap \D_i \neq \varnothing \quad \text{and} \quad (\Z\times\{N-1\}) \cap \D_i \neq \varnothing. 
\end{equation*}
$F$ is said to be \emph{vertical-like} if $\D_i$ is vertical-like for all $1 \leqslant i \leqslant m$; similarly we call $\D_i$ \emph{horizontal-like} if
\begin{equation*} 
(\{0\}\times\Z) \cap \D_i \neq \varnothing \quad \text{and} \quad (\{N-1\}\times\Z) \cap \D_i \neq \varnothing, 
\end{equation*}
and $F$ is said to be \emph{horizontal-like} if $\D_i$ is horizontal-like for all $1 \leqslant i \leqslant m$. 
\end{de}
Note that fractal squares in previous examples are all vertical-like. By definition, we immediately have
\begin{fact}\label{fact:1}
  If one of $\D_1,\ldots,\D_m$ is vertical-like then others cannot be horizontal-like.
\end{fact}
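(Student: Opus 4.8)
The plan is to argue by contradiction. After relabelling, suppose $\D_1$ is vertical-like and $\D_2$ is horizontal-like, with $\D_1 \neq \D_2$; the goal is to exhibit $d \in \D_1$ and $d' \in \D_2$ with $\varphi_d(F) \cap \varphi_{d'}(F) \neq \varnothing$, which by the definition of connectedness in $\D$ forces $\D_1$ and $\D_2$ to lie in a single component --- the desired contradiction.

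As in the proof of Theorem~\ref{thm:main1}, $Q_1|_{\D_1}$ and $Q_1|_{\D_2}$ are path-connected finite unions of closed squares of side length $1/N$. Since $\D_1$ is vertical-like, $Q_1|_{\D_1}$ contains squares meeting $[0,1] \times \{0\}$ and $[0,1] \times \{1\}$, so it joins the bottom and top sides of $[0,1]^2$; since $\D_2$ is horizontal-like, $Q_1|_{\D_2}$ joins the left and right sides. By the classical planar fact that a connected subset of the square joining one pair of opposite sides meets every connected subset joining the other pair (a standard consequence of the Jordan curve theorem), $Q_1|_{\D_1} \cap Q_1|_{\D_2} \neq \varnothing$. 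Since $\D_1 \cap \D_2 = \varnothing$, no two of the squares involved coincide, so this intersection lies on the grid lines and, in particular, contains a grid vertex $p$.

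Promoting ``two squares touch at $p$'' to ``two fractal pieces touch'' is where the main work --- and the main obstacle --- lies. The key point is that, tracing how the connected set $Q_1|_{\D_2}$ contrives to reach both the left and right sides of $[0,1]^2$ while $Q_1|_{\D_1}$ together with the top and bottom sides separates them, the crossing is forced to take place at a \emph{pinch}: a grid vertex $p$ at which two diagonally adjacent squares $\varphi_{d_1}([0,1]^2), \varphi_{d_2}([0,1]^2)$ with $d_1,d_2 \in \D_1$ touch, while the remaining two squares meeting at $p$ both come from $\D_2$. Since $\D_1$ is connected, $d_1$ and $d_2$ --- whose squares meet only at $p$ --- must be joined by a chain inside $\D_1$, and by localising the separation argument near the pinch, and iterating it through the self-similar construction to rule out the chain evading $p$, one obtains $\varphi_{d_1}(F) \cap \varphi_{d_2}(F) = \{p\}$; equivalently, $p = \varphi_{d_1}(q_1) = \varphi_{d_2}(q_2)$ with $q_1, q_2$ opposite corners of $[0,1]^2$ lying in $F$. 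Applying the same reasoning to the two $\D_2$-squares at $p$ forces the other two corners of $[0,1]^2$ into $F$; then $p$ lies simultaneously in $\varphi_{d_1}(F)$, a $\D_1$-piece, and in $\varphi_{d'}(F)$ for one of the $\D_2$-squares $d'$ at $p$, so $\varphi_{d_1}(F) \cap \varphi_{d'}(F) \ni p$, completing the contradiction. Apart from the crossing lemma and the elementary incidence geometry of the grid of small squares, everything hinges on this pinch-and-corner analysis, and I expect that making the ``forced pinch'' step and the ``chain cannot evade the pinch'' step precise --- essentially a careful, scale-by-scale version of the separation argument --- will be the delicate part.
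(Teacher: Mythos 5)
The paper offers no proof of this Fact (it is asserted as ``immediate by definition''), so there is nothing to compare your argument against line by line; you are right that something genuinely needs to be proved, and your overall strategy --- crossing lemma, reduction to a diagonal ``pinch'' at a grid vertex, then forcing all four corners of $[0,1]^2$ into $F$ so that a $\D_1$-piece meets a $\D_2$-piece at that vertex --- is the correct one. The endgame is also right: two diagonally adjacent squares have fractal pieces meeting, if at all, exactly at the shared vertex, and this happens precisely when the two corresponding opposite corners of $[0,1]^2$ lie in $F$; once all four corners are in $F$, every square at the vertex contains it in its fractal piece and all four digits fall into one component of $\D$.

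The gap is in how you produce the pinch. A topological intersection of the square-unions $Q_1|_{\D_1}$ and $Q_1|_{\D_2}$ only yields two diagonally adjacent squares of $\D_1$ (and two of $\D_2$) that touch \emph{as squares}; it does not give $\varphi_{d_1}(F)\cap\varphi_{d_2}(F)\neq\varnothing$, which is what the corner analysis needs. Your proposed repair --- that the chain in $\D_1$ joining $d_1$ to $d_2$ ``cannot evade $p$'' --- does not hold: $\D_1$ may contain many other digits, so the chain can simply detour around $p$ through edge-adjacencies elsewhere, in which case $\varphi_{d_1}(F)\cap\varphi_{d_2}(F)$ can be empty and no amount of iterating the self-similar construction changes that. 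The clean fix is to run the crossing argument not on the square-unions but on two polygonal arcs: take a chain $d_1,\dots,d_k$ in $\D_1$ with $\varphi_{d_i}(F)\cap\varphi_{d_{i+1}}(F)\neq\varnothing$ joining the bottom row to the top row (it exists by connectedness and vertical-likeness), join the centres of consecutive squares and extend vertically to $y=0$ and $y=1$; do the same horizontally for a chain in $\D_2$. These arcs must meet, and a short case analysis (using $\D_1\cap\D_2=\varnothing$, so the arcs share no square centre) shows that the only possible meeting point is a transversal crossing of a slope $+1$ step of one chain with a slope $-1$ step of the other at a common grid vertex. At such a vertex both diagonal pairs are fractal-adjacent by construction of the chains, hence all four corners of $[0,1]^2$ lie in $F$, and your corner argument finishes the proof.
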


The following proposition presents a necessary condition for a fractal square to have only finitely many connected components.
\begin{prop}\label{prop:verhor}
  Let $F=F(N, \D)$ be a disconnected fractal square. If $\#\C(F)<\infty$ then $F$ is either vertical-like or horizontal-like.
\end{prop}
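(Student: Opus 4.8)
The plan is to argue by contraposition: assuming $F$ is neither vertical-like nor horizontal-like, I will exhibit infinitely many connected components. By Fact~\ref{fact:1}, the negation of ``vertical-like or horizontal-like'' means there is some component $\D_{i_0}$ that is \emph{not} vertical-like and some component $\D_{j_0}$ that is \emph{not} horizontal-like (possibly $i_0=j_0$). Not being vertical-like means $\D_{i_0}$ misses the bottom row $\Z\times\{0\}$ or misses the top row $\Z\times\{N-1\}$; correspondingly $F_{i_0}$ is contained in a horizontal strip $[0,1]\times[a,b]$ with $0<a$ or $b<1$, hence strictly inside the open strip $(0,1)$ in the vertical direction. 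Symmetrically $F_{j_0}$ sits strictly inside the open strip in the horizontal direction. The key geometric observation is that such a ``trapped'' piece cannot touch the corresponding pair of opposite edges of $[0,1]^2$, and therefore, when placed via $\varphi_d$ inside $Q_1$, it becomes isolated from its neighbours along that direction.

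First I would make the strip statement precise: if $\D_i$ is not vertical-like, then (after possibly noting the two symmetric cases) $\bigcup_{d\in\D_i}\varphi_d([0,1]^2)$, and hence $F_i$, lies in $[0,1]\times[1/N,1]$ or in $[0,1]\times[0,1-1/N]$; in either case there is a closed horizontal strip of positive distance from one horizontal edge of the unit square that contains $F_i$. The crucial consequence is a separation lemma: because the copies $\varphi_d(F_{i_0})$ for $d\in\D$ are separated from the top (or bottom) by a definite gap, iterating the construction produces, at every level $n$, a copy of $F_{i_0}$ surrounded by empty space on one side, and these copies lie in pairwise distinct connected components of $F$. Concretely, I would track a nested sequence: inside $F$ there is a copy $\varphi_{d^{(1)}}(F)$, inside which there is the sub-copy $\varphi_{d^{(1)}}(F_{i_0})$; because $F_{i_0}$ is trapped in a strip, this sub-copy meets $\varphi_{d^{(1)}}(F_{j})$ for other $j$ only if allowed, but in any case at the next level we can again find a trapped copy, and the ``missing row'' forces a genuine gap in $Q_n$ that survives to $F$. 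Each stage gives a new component, and distinct stages give distinct components because the gaps occur at different scales/locations — so $\#\C(F)=\infty$.

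The main obstacle, and where I would spend the most care, is turning ``$F_{i_0}$ lies in a strip away from one edge'' into ``the corresponding copy is a separate connected component of $F$,'' because a priori a copy could be reconnected to the rest of $F$ through the \emph{other} direction (left–right) even if it is cut off top–bottom. This is exactly why we need \emph{both} a non-vertical-like and a non-horizontal-like component: using $\D_{i_0}$ we get horizontal slices that are cut off vertically, and iterating while also invoking $\D_{j_0}$ (inside a suitable sub-copy) we get a piece that is cut off in \emph{both} directions, hence genuinely isolated as a connected component of $F$. The bookkeeping is: choose a digit $d_1\in\D$ placing $\varphi_{d_1}([0,1]^2)$ so that its $\D_{i_0}$-part is flush against the side of $Q_1$ where the gap lies, giving a component of $Q_1$ (and of $F$) trapped in a half-open region; then within $\varphi_{d_1}(F_{i_0})$, which is a scaled copy of $F_{i_0}$, re-run the argument with $\D_{j_0}$ at the next scale to peel off a piece isolated on all four sides; then repeat indefinitely, noting that the pieces produced at scale $N^{-n}$ cannot coincide with those at other scales (e.g. by comparing diameters, or by the explicit disjointness of the surrounding gaps). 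I expect the only real subtlety is verifying that the gaps genuinely persist in the limit $F=\bigcap_n Q_n$ rather than closing up — but this follows because the gap at stage $n$ is a fixed open neighbourhood depending only on the omitted row of $\D_{i_0}$ (resp.\ column of $\D_{j_0}$) and is therefore bounded below in size at each fixed scale, so it cannot be filled by later stages of the construction.
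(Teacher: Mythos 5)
Your opening reduction is where the argument breaks. The negation of ``vertical-like or horizontal-like'' gives, by definition alone (this is not what Fact~\ref{fact:1} says), a component $\D_{i_0}$ missing a boundary row and a component $\D_{j_0}$ missing a boundary column, possibly distinct. The paper's essential step, which you skip, is to use Fact~\ref{fact:1} to upgrade this to a \emph{single} component that is neither vertical-like nor horizontal-like: if no component failed both, then $\D_{i_0}$ would be horizontal-like and $\D_{j_0}$ vertical-like, and Fact~\ref{fact:1} forbids that coexistence. Once one component, say $\D_{i_0}$, misses (say) the top row and the right column, so that $Q_1|_{\D_{i_0}}\subset[0,(N-1)/N]^2$, placing it in the extremal cell of $\D$ (the bottom cell of the leftmost column) isolates $\varphi_{d_0}(F_{i_0})$ from all the rest of $F$ at a single scale: the missing row and column shield the top and right sides of the cell, and the extremal position of $d_0$ shields the left and bottom. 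Iterating inside this copy produces infinitely many components.

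Your substitute for this reduction, the two-scale cut with two different components, does not work, and it is exactly the configuration Fact~\ref{fact:1} excludes that you would have to survive. Say $\D_{i_0}$ misses the top row, $\D_{j_0}$ misses the right column, $i_0\neq j_0$. Choosing $d_1$ in the top row of $\D$ only keeps $\varphi_{d_1}(F_{i_0})$ off the top edge of $[0,1]^2$; it is not a connected component of $Q_1$ or of $F$ (contrary to what you assert), since it may still meet neighbours through the left, right and bottom edges of $\varphi_{d_1}([0,1]^2)$. The level-2 candidate $\varphi_{d_1}\varphi_d(F_{j_0})$, $d\in\D_{i_0}$, avoids only the right edge of its own level-2 cell, and since $d$ lies strictly below the top row of the level-1 cell (because $\D_{i_0}$ misses that row), the level-1 gap does not shield the top of this level-2 cell at all. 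To isolate the piece you would need a digit $d\in\D_{i_0}$ with no $\D_{i_0}$-cells above, below, or anywhere on its left; such a $d$ need not exist (already impossible when a column of $\D_{i_0}$ contains a pillar of height at least $2$), and when it fails the piece can reconnect vertically to $\varphi_{d_1}\varphi_{d\pm(0,1)}(F)$. So no relatively clopen piece, hence no new component, is produced, and $\#\C(F)=\infty$ is not established. The persistence-of-gaps point you worry about at the end is the harmless part; the genuine gap is the missing single-component reduction via Fact~\ref{fact:1}, which is what makes the one-scale corner isolation (and its iteration) possible.
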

\begin{proof}
Clearly $\#\D>1$. Assume that $F$ is neither vertical-like nor horizontal-like. We first claim that there exists  an $i_0\in\{1,\ldots,m\}$ such that $\D_{i_0}$ is neither vertical-like nor horizontal-like. Since $F$ is not horizontal-like, we know from definition that there exists a $\D_k$ which is not horizontal-like. If $\D_k$ is also not vertical-like then the assertion clearly holds. Otherwise $\D_k$ is vertical-like. By Fact~\ref{fact:1}, $\D_i$ cannot be horizontal-like for each $i \not=k$. Since $F$ is also not vertical-like, there exists an $i_0 \not=k$ such that $\D_{i_0}$ is not vertical-like. Thus $\D_{i_0}$ is neither vertical-like nor horizontal-like.

For this $i_0$, it is not difficult to see that $Q_1|_{\D_{i_0}}$ must be contained in one of the four shaded squares in Figure~\ref{fourcases}.
\begin{figure}[htbp]
\centering\includegraphics[scale=0.15]{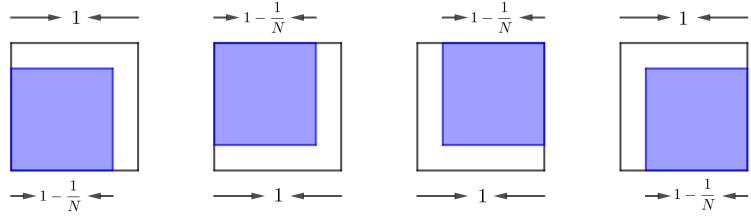}\caption{Four cases: the peripheral square is $[0, 1]^2$}\label{fourcases}
\end{figure}
Then one can conclude that $\#\C(F)=\infty$ (For example, if it were the first case, the bottom square in the leftmost column in $Q_1$ will split and contribute to a new connected component in $Q_2$. This procedure will go on and therefore $\#\C(F)=\infty$).
\end{proof}

\begin{remark}\label{rem:2}
It is noteworthy that for any $n \geqslant 2$, we can always regard $Q_n$ as the first stage in the geometric construction of $F$ since $\bigcap_{k=1}^\infty Q_{kn} = F$. More precisely, if we let
\begin{equation*} 
\D' = N^{n-1}\D+N^{n-2}\D+\cdots+\D, 
\end{equation*}
and set $\psi_d(x)=(x+d)/N^n$ for $d \in \D'$, then the fractal square generated by $\{\psi_d:d\in\D'\}$ coincides with $F$. Further by Proposition~\ref{prop:verhor}, if $\#\C(F)<\infty$, then either all connected components of $\D'$ are vertical-like or all of them are horizontal-like. In particular, for each $\mathcal{B}\in\C(Q_n)$, there exist $0\leqslant a,b\leqslant N^n-1$ such that 
\[ \Big[\frac{a}{N^n}, \frac{a+1}{N^n}\Big] \times \{0\} \subset \B \quad \text{ and } \quad \Big[\frac{b}{N^n}, \frac{b+1}{N^n}\Big] \times \{1\} \subset \B. \]
%
\end{remark}

\begin{coro}\label{cor:cooffver}
Suppose $F$ is a vertical-like fractal square with $\#\C(F)<\infty$, then every connected component $\c$ of $F$ is also ``vertical-like'', i.e., there exist $0\leqslant a,b \leqslant 1$ and a continuous curve $\gamma \subset \c$ joining $(a,0)$ and $(b,1)$.	
\end{coro}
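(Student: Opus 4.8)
The plan is to pass to an iterate of the generating IFS so that the pieces $F_1,\dots,F_m$ become exactly the connected components of $F$, then to use the resulting self-similar-type relations to show each component is locally connected, hence arcwise connected, and finally to place the two endpoints on $\{y=0\}$ and $\{y=1\}$ using Remark~\ref{rem:2}. For the reduction: $\#\C(Q_n)$ is non-decreasing in $n$ (each component of $Q_{n+1}$ lies in one of $Q_n$, and every component of $Q_n$ meets $F\subset Q_{n+1}$ because each square of $Q_n$ contains a scaled copy of $F$); $\#\C(Q_n)\le\#\C(F)$ (a component of $Q_n$ contains a point of $F$, hence the whole $F$-component of that point, so $\c\mapsto$ the $Q_n$-component containing $\c$ maps $\C(F)$ onto $\C(Q_n)$); and, by Lemma~\ref{lem:Mun}, two distinct components of $F$ cannot lie in a common component of $Q_n$ for all $n$, since otherwise the intersection of those nested $Q_n$-components would be a connected subset of $\bigcap_nQ_n=F$ meeting two components. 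Hence $\#\C(Q_n)=\#\C(F)$ for all large $n$. Replacing $\{\varphi_d\}$ by its $n$-th iterate for such an $n$, as in Remark~\ref{rem:2} (this changes neither $F$ nor $\C(F)$), we may assume $\#\C(\D)=\#\C(F)$, say both equal $m$, with $\C(\D)=\{\D_1,\dots,\D_m\}$; indeed for the iterated system $\#\C(Q_1)\le\#\C(\D)\le\#\C(F)$ (Lemma~\ref{lem:2-2} and the fact that square-components of $Q_1$ are unions of components of $\D$) while $\#\C(Q_1)$ is the old $\#\C(Q_n)=\#\C(F)$. Since the $F_i=\bigcup_{d\in\D_i}\varphi_d(F)$ are $m$ pairwise disjoint nonempty closed sets covering $F$, the components of $F$ refine $\{F_1,\dots,F_m\}$, and both collections having $m$ members they coincide; so each $F_i$ is connected and $\C(F)=\{F_1,\dots,F_m\}$.

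\textbf{The graph-directed picture.} As $F=\bigcup_{l=1}^m F_l$, for every $i$ we have $F_i=\bigcup_{d\in\D_i}\varphi_d(F)=\bigcup_{d\in\D_i}\bigcup_{l=1}^m\varphi_d(F_l)$, and iterating this, for each $k\ge1$ the set $F_i$ is the union of the ``cells'' $\varphi_{d_1}\!\circ\cdots\circ\varphi_{d_k}(F_l)$ with $d_1\in\D_i$, $d_2,\dots,d_k\in\D$ and $1\le l\le m$. Each cell is a homeomorphic copy of one of the connected sets $F_l$ and has diameter at most $\sqrt2\,N^{-k}$; and --- the key structural point --- the cells comprising a fixed $F_i$ chain up at every level (their intersection graph is connected). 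This follows by induction on $k$ from the elementary fact that a finite union of closed connected sets with connected union has connected intersection graph: apply it inside each level-$k$ cell, whose level-$(k+1)$ subcells union to that connected cell, and propagate between two intersecting level-$k$ cells through a common point, which lies in a subcell of each.

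\textbf{Arcwise connectedness and the curve.} A compact connected set that, for every $k$, decomposes into finitely many closed connected subsets of diameter $O(N^{-k})$ whose intersection graph is connected is locally connected --- this is precisely Hata's local-connectedness argument (see \cite{Hata}, \cite{Kigami}), which applies equally to the graph-directed family $(F_1,\dots,F_m)$. Hence each component $\c=F_i$ of $F$ is locally connected, thus a Peano continuum, in particular arcwise connected. To conclude, fix $\c$ and let $\B_n\in\C(Q_n)$ be the unique component with $\c\subset\B_n$; then $\B_n\supset\B_{n+1}$ and $\bigcap_n\B_n=\c$. By Remark~\ref{rem:2}, each $\B_n$ contains points $(a_n,0)$ and $(b_n,1)$ with $a_n,b_n\in[0,1]$; along a subsequence $a_n\to a$ and $b_n\to b$, and since the $\B_n$ are nested and closed, $(a,0),(b,1)\in\c$ with $0\le a,b\le1$. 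As $\c$ is arcwise connected, there is a continuous curve $\gamma\subset\c$ joining $(a,0)$ and $(b,1)$, which is exactly the assertion.

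\textbf{Main obstacle.} The genuinely delicate step is the arcwise connectedness of $\c$: writing $\c=\bigcap_n\B_n$ with the $\B_n$ path-connected (finite unions of squares) and nested yields, through Lemma~\ref{lem:Mun}, only that $\c$ is connected, and a decreasing intersection of path-connected compacta need not be path-connected --- it is the self-similar structure recorded above that forces local connectedness. I expect the bulk of the effort to be either transcribing Hata's local-connectedness proof to the graph-directed setting, or else constructing $\gamma$ directly, level by level, as a uniform limit of paths running through successively finer chains of cells; either way, the engine is the ``cells chain up at every level'' property isolated in the second step.
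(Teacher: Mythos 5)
Your proposal is correct, and its endgame coincides with the paper's proof: the paper also takes the nested components $\B_n\in\C(Q_n)$ with $\c=\bigcap_n\B_n$, invokes Remark~\ref{rem:2} to find bottom and top squares of $\B_n$, extracts the points $(a,0),(b,1)\in\c$ (via nested intervals rather than your subsequential limits — an immaterial difference), and then concludes by the equivalence of connectedness and path-connectedness. The real divergence is that the paper treats that last ingredient as known, citing Hata/Kigami for path-connectedness, whereas the bulk of your write-up supplies a self-contained proof of it: the reduction (using $\#\C(F)<\infty$ and Lemma~\ref{lem:Mun} to pass to an iterate of the IFS in which $\C(F)=\{F_1,\dots,F_m\}$) followed by the Hata-type cell-chaining argument giving local connectedness, hence arcwise connectedness, of each component. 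That reduction is sound (the surjection $\C(F)\to\C(Q_n)$, the eventual injectivity via nested $Q_n$-components, and the count $\#\C(Q_1)\leqslant\#\C(\D')\leqslant\#\C(F)$ for the iterated digit set all check out), and the induction showing the level-$k$ cells of $F_i$ have connected intersection graph is fine. Two small remarks: for local connectedness you do not actually need the intersection-graph step — the decomposition of the continuum $F_i$ into finitely many connected cells of diameter $O(N^{-k})$ for every $k$ is precisely property S, which already gives local connectedness; the chaining is only needed if you build the path directly, level by level. And your approach buys something the paper does not state explicitly, namely that every component of such an $F$ is a Peano continuum, at the cost of length; the paper's version is shorter because it outsources exactly the step you identify as the main obstacle.
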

\begin{proof}
Since $F=\bigcap_{n=1}^\infty Q_n$ is the limit set of a decreasing sequence, we can find $\{\B_n\}_{n=1}^\infty$, where $\B_n \in \C(Q_n)$ such that $\B_{n+1} \subset \B_n$ and $\c=\bigcap_{n=1}^\infty \B_n$. In view of the above remark, there exist $a_n,b_n$ such that 
\[ \Big[\frac{a_n}{N^n}, \frac{a_n+1}{N^n}\Big] \times \{0\} \subset \B_n \quad \text{ and } \quad \Big[\frac{b_n}{N^n}, \frac{b_n+1}{N^n}\Big] \times \{1\} \subset \B_n. \]
Since $\{\B_n\}$ is decreasing, we can choose $a_n, b_n$ properly to make above two sequences of closed intervals both decreasing. Then it suffices to choose $a=\bigcap_{n=1}^\infty [\frac{a_n}{N^n},\frac{a_n+1}{N^n}]$ and $b=\bigcap_{n=1}^\infty [\frac{b_n}{N^n},\frac{b_n+1}{N^n}]$. The existence of $\gamma$ follows from the equivalence between connectedness and path connectedness in $F$.	
\end{proof}


In the rest of this section, we always assume that $F=F(N,\D)$ is vertical-like, i.e., $\D_1,\D_2,\ldots,\D_m$ are all vertical-like. Thus for any distinct $i, j \in \{1, \ldots, m\}$, it is not difficult to see that either $\D_i$ lies on the left of $\D_j$ or on the right. More precisely, either
\begin{align*}
&\max\{s: (s, k) \in \D_i\} < \min\{s: (s, k) \in \D_j\}, \quad 0 \leqslant k \leqslant N-1, \text{or} \\
&\max\{s: (s, k) \in \D_j\} < \min\{s: (s, k) \in \D_i\}, \quad 0 \leqslant k \leqslant N-1.
\end{align*}
Rearranging if necessary we may assume
\begin{equation*} 
\max\{s: (s, k) \in \D_i\} < \min\{s: (s, k) \in \D_{i+1}\}, \quad 0 \leqslant k \leqslant N-1 
\end{equation*}
for every $1 \leqslant i \leqslant m-1$, and naturally say that the sequence $\{\D_i\}_{i=1}^m$ is \emph{arranged from left to right}.

\begin{de}
We call $P \subset \D$ a \emph{pillar} if $P=\{(a,b),(a,b+1),\ldots,(a,b+k)\}$ for some integers $0\leqslant a,b \leqslant N-1$ and $k \geqslant 0$, while $(a,b-1), (a,b+k+1) \notin P$.	
\end{de}
 The reason we call $P$ a pillar stems from the observation that $\bigcup_{d \in P}\varphi_d([0,1]^2)$ is a pillar in $Q_1$. For example, for the fractal square in Example~\ref{exa:1}, $\{(0,0),(0,1),(0,2),(0,3),(0,4)\}$ is a pillar. 


The following observation gives us the existence of some specific edges in $G_F$ when $\#\C(F)<\infty$.

\begin{prop}\label{prop:intersect}
Suppose $F=F(N,\D)$ is a vertical-like fractal square with $\#\C(F)<\infty$ and $\#\C(\D)=m>1$. Also $\{\D_i\}_{i=1}^m$ is arranged from left to right. Then
\begin{equation*} 
F_1 \cap \bigl( F_1 + (0, 1) \bigr) \neq \varnothing \quad \text{and} \quad F_m \cap \bigl( F_m+(0, 1) \bigr) \neq \varnothing. 
\end{equation*}
\end{prop}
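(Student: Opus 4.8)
\emph{Strategy and reduction.} The plan is to rephrase the conclusion as a statement about the bottom and top slices of $F_1$ and then argue by contradiction, exploiting the leftmost position of $\D_1$ to force infinitely many connected components, much as in the proof of Proposition~\ref{prop:verhor}. Since $F_1\subseteq Q_1|_{\D_1}\subseteq[0,1]^2$ while $F_1+(0,1)\subseteq[0,1]\times[1,2]$, any point of $F_1\cap(F_1+(0,1))$ lies on $[0,1]\times\{1\}$, so $F_1\cap(F_1+(0,1))\neq\varnothing$ if and only if $(t,0)\in F_1$ and $(t,1)\in F_1$ for some $t\in[0,1]$. Writing $L_1$ and $U_1$ for the projections to the first axis of $F_1\cap([0,1]\times\{0\})$ and of $F_1\cap([0,1]\times\{1\})$, the assertion for $F_1$ is exactly $L_1\cap U_1\neq\varnothing$; the statement for $F_m$ follows by applying this to the reflection $(x_1,x_2)\mapsto(1-x_1,x_2)$ of $F$, so I treat only $F_1$. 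Unwinding $F=(F+\D)/N$ once gives $L_1=\bigcup\{(L+a)/N:(a,0)\in\D_1\}$ and $U_1=\bigcup\{(U+a)/N:(a,N-1)\in\D_1\}$, where $L,U$ are the attractors of $\{t\mapsto(t+a)/N:(a,0)\in\D\}$ and $\{t\mapsto(t+a)/N:(a,N-1)\in\D\}$. Since a connected vertical-like $\D_i$ has a cell in every row, ``arranged from left to right'' forces the leftmost bottom-row cell and the leftmost top-row cell of $\D$ to lie in $\D_1$, whence $\min L_1=\frac1{N-1}\min\{a:(a,0)\in\D_1\}$ and $\min U_1=\frac1{N-1}\min\{a:(a,N-1)\in\D_1\}$. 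If these two minima agree, their common value lies in $L_1\cap U_1$ and we are done; so from now on I assume they differ, i.e. the leftmost column of $\D_1$ misses the bottom row or misses the top row.

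\emph{Contradiction setup.} Suppose $L_1\cap U_1=\varnothing$, so $\delta:=\dist(L_1,U_1)>0$ since both are nonempty compacts. The sets $F_i$ are unions of connected components of $F$ (being disjoint closed pieces whose union is $F$), so by Corollary~\ref{cor:cooffver} every component $\c$ of $F_1$ contains a continuous curve from a point $(a_\c,0)$ with $a_\c\in L_1$ to a point $(b_\c,1)$ with $b_\c\in U_1$; in particular $|a_\c-b_\c|\geq\delta$. Now $F_1=\bigcap_n Q_n|_{\D_1}$ with $Q_n|_{\D_1}$ decreasing; every component of $Q_n|_{\D_1}$ contains a component of $F_1$ (it meets $F_1$, since its level-$n$ cells contain copies of $F$ sitting inside $F_1$), hence spans from $y=0$ to $y=1$, and since the projected slices $\pi_1(Q_n|_{\D_1}\cap([0,1]\times\{0\}))$ and $\pi_1(Q_n|_{\D_1}\cap([0,1]\times\{1\}))$ decrease to $L_1$ and $U_1$, they are $\delta/2$-separated for all large $n$, so every such component has horizontal extent $\geq\delta/2$. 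Finally, $\#\C(Q_n|_{\D_1})$ is non-decreasing in $n$ (no component disappears, as each level-$n$ cell carries a copy of $F\subseteq F_1$) and bounded by $\#\C(F_1)<\infty$, hence eventually constant, equal to $\#\C(F_1)$, with the inclusion inducing a bijection $\C(Q_{n+1}|_{\D_1})\to\C(Q_n|_{\D_1})$ past some level and with the components of $F_1$ being the decreasing intersections of those of $Q_n|_{\D_1}$.

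\emph{Splitting off a new component --- the main obstacle.} Now invoke the leftmost position of $\D_1$: its whole leftmost column lies inside $\D_1$, and nothing of $F$ lies strictly left of $x=\mu:=\min\pi_1(F)=\frac1{N-1}\min\{a:(a,\cdot)\in\D_1\}$. Passing from $Q_n$ to $Q_{n+1}=\bigcup_{d\in\D}\varphi_d(Q_n)$, each bottom-row cell of $\D_1$ carries a scaled copy of $Q_n|_{\D_1}$ against $y=0$ and each top-row cell one against $y=1$, and by the previous paragraph these copies come with ``slanted'' spanning components of horizontal extent $\geq\delta/(2N)$; because the leftmost column of $\D_1$ misses the bottom (or the top) row, the leftmost of these slanted strips reaches the extreme left of $Q_{n+1}$ at one of its ends, so it has nothing to reattach to on its left, while the separation prevents it from closing up within a single cell. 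The upshot should be that $Q_{n+1}|_{\D_1}$ acquires a component that is not the image of a component of $Q_n|_{\D_1}$, contradicting the bijection $\C(Q_{n+1}|_{\D_1})\to\C(Q_n|_{\D_1})$ --- this is exactly the mechanism in the proof of Proposition~\ref{prop:verhor} --- which gives $L_1\cap U_1\neq\varnothing$, and $L_m\cap U_m\neq\varnothing$ by reflection. I expect this last step to be the delicate part: turning ``the leftmost slanted strip has nothing to reattach to on its left'' into a rigorous count requires pinning down, via the pillars of the leftmost column of $\D_1$, exactly which sub-square splits off in each of the two cases, and a careful accounting of the level at which $\#\C(Q_n|_{\D_1})$ has stabilized together with the $\delta/2$-separation. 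The rest --- the slice identities, Corollary~\ref{cor:cooffver}, Lemma~\ref{lem:Mun}, and the stabilization of $\#\C(Q_n|_{\D_1})$ --- is routine.
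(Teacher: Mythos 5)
Your reduction of the statement to $L_1\cap U_1\neq\varnothing$, the use of Corollary~\ref{cor:cooffver} to make every component of $Q_n|_{\D_1}$ span from $y=0$ to $y=1$, and the stabilization of $\#\C(Q_n|_{\D_1})$ at $\#\C(F_1)$ are all sound (if somewhat compressed). But the decisive step --- actually exhibiting, under the assumption $L_1\cap U_1=\varnothing$, a component of $Q_{n+1}|_{\D_1}$ that breaks the eventual bijection with $\C(Q_n|_{\D_1})$ --- is only asserted (``the upshot should be''), and it is not a routine technicality: it is exactly where the content of the proposition lies. The hypothesis $F_1\cap\bigl(F_1\pm(0,1)\bigr)=\varnothing$ does not prevent $F_1$ from attaching to vertical translates of the \emph{other} pieces, i.e.\ one may have $F_1\cap\bigl(F_{i_0}-(0,1)\bigr)\neq\varnothing$ and $F_1\cap\bigl(F_{j_0}+(0,1)\bigr)\neq\varnothing$ with $i_0,j_0\geqslant 2$. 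In that configuration the scaled copies of $Q_n|_{\D_1}$ sitting in the bottom- and top-row cells of $\D_1$ \emph{do} reattach to the rest of $Q_{n+1}|_{\D_1}$, namely through the sub-copies of $Q_n|_{\D_{i_0}}$ and $Q_n|_{\D_{j_0}}$ contained in the vertically adjacent $\D_1$-cells, across the horizontal cell edges; so ``nothing to reattach to on its left'' fails and no new component need split off at the spot you point to. (A smaller symptom of the same vagueness: if the leftmost column of $\D_1$ misses the bottom row, the copy carried by the leftmost bottom-row cell does not even reach the extreme left of $Q_{n+1}$.)

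Ruling out this cross-attachment scenario is precisely what the paper's proof does, and by a different, finite argument rather than level counting: if $F_1$ meets no $F_k\pm(0,1)$ at all, then the copy $\varphi_{d_0}(F_1)$ at the top of a leftmost pillar of $\D_1$ is disjoint from every other $\varphi_d(F_i)$, so $F$ has components trapped in a single level-one cell, contradicting Corollary~\ref{cor:cooffver}; and if cross-attachments to $F_{i_0}-(0,1)$ and $F_{j_0}+(0,1)$ do occur, a short coordinate computation (the inequality \eqref{eq:3-3} forcing $x_{d_*}=x_{d^*}=x_{d_{i_0}}-1=x_{d_{j_0}}-1$) shows two cells $d_*\in\D_1$ and $d_{j_0}\in\D_{j_0}$ must touch at the corner $\varphi_{d_*}((1,0))=\varphi_{d_{j_0}}((0,0))$ with this point in $\varphi_{d_*}(F)\cap\varphi_{d_{j_0}}(F)$, which would merge $\D_1$ and $\D_{j_0}$ into one component of $\D$ --- a contradiction. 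Nothing in your sketch plays the role of this second case, so as it stands the proposal has a genuine gap; to complete it along your lines you would have to prove that cross-attachment is impossible, which essentially amounts to reproducing the paper's corner argument, at which point the $Q_n$-counting machinery becomes unnecessary.
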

\begin{proof}
For any $d \in \D$, we temporarily denote $x_d$ to be its first coordinate and $y_d$ to be the second, i.e., $d=(x_d, y_d)$. The proposition is proved by contradiction. Assume that $F_1 \cap \bigl( F_1 + (0, 1) \bigr) = \varnothing$. Equivalently $F_1 \cap \bigl( F_1 - (0, 1) \bigr) = \varnothing$. Let $P \subset \D_1$ be a leftmost pillar, i.e., for any $d \in P$ we have $x_d = \min\{x: (x, y) \in \D_1\}$. Choose $d_0 \in P$ with the largest $2$nd coordinate, i.e., $y_{d_0}=\max\{y_d:d\in P\}$.

We claim that there exists an $i_0 \in \{2,\ldots,m\}$ such that $F_1 \cap (F_{i_0}-(0,1)) \neq \varnothing$. In fact, if $F_1 \cap \bigl( F_k-(0, 1) \bigr) = \varnothing$ for $2 \leqslant k \leqslant m$, then $F_1 \cap (F-(0,1))=\varnothing$. Thus for all $d\in\D\setminus\{d_0\}$ with $x_d=x_{d_0}$, we have $(F_1+d_0)\cap(F+d)=\varnothing$, and therefore $\varphi_{d_0}(F_1) \cap \varphi_d(F)=\varnothing$. Since $P$ is a leftmost pillar, we know that for all $d\in\D$ with $x_d\neq x_{d_0}$, $(F_1+d_0)\cap(F+d)=\varnothing$, so $\varphi_{d_0}(F_1) \cap \varphi_{d}(F)=\varnothing$. In conclusion,
\begin{equation*} 
\varphi_{d_0}(F_1) \cap \varphi_d(F_i) = \varnothing, \quad \forall (d,i)\not=(d_0,1).
\end{equation*}
This means there are some connected components of $F$ contained in $\varphi_{d_0}(F_1) \subset \varphi_{d_0}([0,1]^2)$, which cannot be vertical-like.
From Corollary~\ref{cor:cooffver} we see that  $\#\C(F)=\infty$, which is a contradiction.

Similarly, there exists a $j_0 \in \{2,\ldots,m\}$ such that $F_1 \cap (F_{j_0}+(0, 1)) \neq \varnothing$.

It follows from $F_1 \cap \bigl(F_{i_0}-(0, 1)\bigr) \neq \varnothing$, $F_1 \cap \bigl(F_{j_0}+(0, 1)\bigr) \neq \varnothing$ and \eqref{eq:Fi} that we can find $d_*, d^* \in \D_1$, $d_{i_0} \in \D_{i_0}$ and $d_{j_0} \in \D_{j_0}$, where $y_{d_*}=y_{d_{j_0}}=0, y_{d^*}=y_{d_{i_0}}=N-1$, such that
\begin{equation*} 
\varphi_{d_*}(F) \cap \big(\varphi_{d_{i_0}}(F)-(0,1)\big) \neq \varnothing, \quad \varphi_{d^*}(F) \cap \big( \varphi_{d_{j_0}}(F) + (0, 1) \big) \neq \varnothing. 
\end{equation*}
This implies that $d_{i_0} - d_*, d^* - d_{j_0} \in \{(\epsilon, N-1):\, \epsilon=0,\pm 1\}$. Since $\D_1$ lies on the left of $\D_{i_0}$ and $\D_{j_0}$, we have $x_{d_*} < x_{d_{j_0}}$ and $x_{d^*} < x_{d_{i_0}}$. Thus
\begin{equation}\label{eq:3-3}
 x_{d_{j_0}} - x_{d^*} \geqslant (x_{d_*} +1) - (x_{d_{i_0}}-1) = (x_{d_*}-x_{d_{i_0}})+2.
\end{equation}
Combining this with $x_{d_*}-x_{d_{i_0}},x_{d_{j_0}} - x_{d^*}\in \{0,\pm 1\}$, we have $x_{d_*}-x_{d_{i_0}}=-1$ and $x_{d_{j_0}} - x_{d^*}=1$.
Meanwhile, the inequality in \eqref{eq:3-3} should be an equality so that $x_{d_{j_0}}=x_{d_*} +1$ and $x_{d^*}=x_{d_{i_0}}-1$. Thus $x_{d_*}=x_{d^*}=x_{d_{i_0}}-1=x_{d_{j_0}}-1$. It follows that
\begin{equation*} 
\varphi_{d_*}(F) \cap \big( \varphi_{d_{i_0}}(F) -(0,1) \big) = \varphi_{d_{j_0}}(F)  \cap \big( \varphi_{d^*}(F)-(0,1) \big) 
\end{equation*}
is a singleton. Consequently, 
\[ \varphi_{d_*}((1,0))=\varphi_{d_{j_0}}((0,0)) \in \big(\varphi_{d_*(F)} \cap \varphi_{d_{j_0}}(F)\big). \]
This implies that $d_*$ and $d_{j_0}$ belong to the same connected component of $\D$. But this cannot happen since $\D_1 \neq \D_{j_0}$.

Now we obtain a contradiction and hence $F_1 \cap \bigl( F_1 + (0, 1) \bigr) \neq \varnothing$. Similarly $F_m \cap \bigl( F_m+(0, 1) \bigr) \neq \varnothing$.
\end{proof}

Given any pillar $P \subset \D$, $\{(d,i):d \in P,1\leqslant i \leqslant m\}$  is a subset of the vertex set of $G_F$, and we denoted it by $G_F[P]$ for notational simplicity. 

\begin{lemma}\label{lem:2}
Suppose $\#\C(\D)=m>1$, $P_1, P_2 \subset \D$ are pillars with $\#P_1\leqslant \#P_2$. If $G_F[P_1]$ is connected then so is $G_F[P_2]$.
\end{lemma}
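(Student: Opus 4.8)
The plan is to prove the contrapositive-flavored statement by directly exhibiting, for each vertex of $G_F[P_2]$, a path in $G_F$ connecting it to the ``bottom'' vertex of the column, using the connectedness of $G_F[P_1]$ as leverage. Write $P_1 = \{(a_1,b_1),\dots,(a_1,b_1+k_1)\}$ and $P_2=\{(a_2,b_2),\dots,(a_2,b_2+k_2)\}$ with $k_1 \leqslant k_2$. The key observation is that the existence of an edge in $G_F$ between $(d,i)$ and $(d',i')$ depends only on the relative position $d'-d$ of the two translates and on the pair $(i,i')$ (since $\varphi_{d}(F_i)\cap\varphi_{d'}(F_{i'})\neq\varnothing$ iff $(F_i+d)\cap(F_{i'}+d')\neq\varnothing$). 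Consequently, whether $G_F[P]$ is connected is governed entirely by the ``vertical adjacency data'' of the pieces $F_1,\dots,F_m$: namely the sets $S_{i,i'} = \{\epsilon\in\{0,\pm1\}: F_i\cap(F_{i'}+(\epsilon,1))\neq\varnothing\}$ together with the horizontal-overlap data $T_{i,i'}=\{\delta\in\{0,\pm1\}: F_i\cap(F_{i'}+(\delta,0))\neq\varnothing\}$. Since $P_1$ and $P_2$ occupy the same relative horizontal position internally (all squares share one column), the only feature distinguishing $G_F[P_1]$ from $G_F[P_2]$ is the number of available ``floors''.

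First I would reduce to the case $b_1=b_2=0$ by translation invariance, so $P_1$ uses floors $0,\dots,k_1$ and $P_2$ uses floors $0,\dots,k_2$. Next I would show connectedness of $G_F[P_2]$ reduces to two claims: (i) within each single floor $\ell$, the $m$ vertices $\{(d,i):1\leqslant i\leqslant m\}$ with $d$ the square of $P_2$ on floor $\ell$ are connected among themselves in $G_F$; and (ii) consecutive floors of $P_2$ are linked, i.e., some vertex on floor $\ell$ is joined by an edge to some vertex on floor $\ell+1$. For (i): since $G_F[P_1]$ is connected and $P_1$ has at least one floor, the single-floor horizontal connections $T_{i,i'}$ together with the vertical connections $S_{i,i'}$ across $P_1$'s floors suffice to connect the $m$ "types" to each other; but a vertical connection across two floors of $P_1$ uses exactly the same relative offset $(\epsilon,1)$ that is also available between any two consecutive floors of $P_2$ (when $k_1\geqslant 1$), and a purely horizontal connection inside one floor transplants verbatim. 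If $k_1=0$ (so $P_1$ is a single square), then $G_F[P_1]$ connected already forces the $m$ types to be connected using only the offsets in the $T_{i,i'}$, which again transplant to every single floor of $P_2$. For (ii): if $k_1\geqslant 1$, the edge of $G_F[P_1]$ crossing from floor $\ell$ to $\ell+1$ has some offset $(\epsilon,1)$ with $\epsilon\in S_{i,i'}$; this same offset produces an edge between floor $\ell$ and floor $\ell+1$ of $P_2$. If $k_1=0$ this case is vacuous. Assembling (i) and (ii) gives a path between any two vertices of $G_F[P_2]$, hence $G_F[P_2]$ is connected.

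The main obstacle I anticipate is handling the "horizontal-only" connections correctly, i.e., being careful that a path in $G_F[P_1]$ may repeatedly move sideways within a fixed floor and only occasionally move up or down. The clean way to handle this is to argue at the level of the abstract "adjacency relation on types and offsets": define an auxiliary finite graph $H_\ell$ on vertex set $\{1,\dots,m\}$ whose edges record which $T_{i,i'}$ are nonempty (this $H_\ell$ is the same for every floor $\ell$, call it $H$), and an auxiliary bipartite-type relation recording which $S_{i,i'}$ are nonempty. Then $G_F[P]$ is connected iff the graph on $\{0,\dots,k\}\times\{1,\dots,m\}$ obtained by placing a copy of $H$ on each floor and the $S$-edges between consecutive floors is connected; monotonicity in the number of floors is then essentially immediate, since adding a top floor only enlarges this graph by a copy of $H$ attached via the (nonempty, because $k_1\geqslant1$ case) $S$-edges — and the $k_1=0$ case is the base case where $H$ alone is already connected. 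I would present this monotonicity cleanly rather than chase individual paths, as that keeps the bookkeeping manageable.
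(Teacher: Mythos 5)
Your claim (i) is false, and this breaks the main assembly of the argument. A pillar occupies a single column, so within one ``floor'' (one square $d$ of $P_2$) the only candidate edges are between $(d,i)$ and $(d,i')$, which exist iff $F_i\cap F_{i'}\neq\varnothing$ --- and this is \emph{always} empty for $i\neq i'$, since distinct $F_i$ are disjoint by construction. Your horizontal data $T_{i,i'}$ with $\delta=\pm1$ never comes into play inside a pillar (there are no horizontally adjacent squares in it), so the $m$ vertices on a single floor are pairwise non-adjacent, not connected. For the same reason your $k_1=0$ base case is not a case where ``$H$ alone is already connected'': a singleton pillar has $G_F[P_1]$ equal to $m$ isolated vertices, so the hypothesis of the lemma forces $\#P_1\geqslant 2$. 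The paper makes exactly this observation at the outset and then leans on it. With (i) gone, claim (ii) alone (``some vertex on floor $\ell$ is joined to some vertex on floor $\ell+1$'') does not yield connectedness of $G_F[P_2]$.

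Your closing ``abstract monotonicity'' paragraph is closer to a correct proof, but the step you call essentially immediate is where the content lives: when you add a top floor you must attach \emph{every} one of its $m$ vertices to the graph below, and the mere nonemptiness of the $S$-relation does not give that. It can be repaired --- since the within-floor graph is edgeless, connectedness of the $(k_1+1)$-floor graph forces every top-floor vertex to have a downward neighbour, and translation invariance transports this to each new floor --- but you need to say so. The paper avoids this bookkeeping by covering $P_2$ with the $q-p+1$ translated copies $P_{2,k}$ of $P_1$ (each of length $\#P_1\geqslant 2$): each $G_F[P_{2,k}]$ is connected by translation invariance, consecutive windows share at least one square and hence at least $m$ vertices, so the union $G_F[P_2]$ is connected. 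I would adopt that overlapping-window argument; it uses only the translation invariance you already identified and sidesteps both of the issues above.
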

\begin{proof}
Suppose $G_F[P_1]$ is connected. Then $\#P_1>1$ (otherwise $G_F[P_1]$ has $m$ trivial connected components). Without loss of generality, we may assume
\begin{equation*}
P_1 = \{(a, b), (a, b+1), \ldots, (a, b+p)\}, \quad P_2 = \{(c, d), (c, d+1), \ldots, (c, d+q)\},
\end{equation*}
where $0 \leqslant a,b,c,d \leqslant N-1$ and $1 \leqslant p \leqslant q$. If $q=p$, note that $\bigcup_{d \in P_2}\varphi_d(F)$ is just a translation of $\bigcup_{d \in P_1}\varphi_d(F)$, so $G_F[P_2]$ is also connected. Suppose $q>p$ and decompose $P_2$ into
\begin{equation*} 
P_2 = \bigcup_{k=0}^{q-p} \{(c, d+k), (c, d+k+1), \ldots, (c, d+k+p)\} := \bigcup_{k=0}^{q-p} P_{2,k}.
\end{equation*}
Since $\#P_{2, k} = \#P_1 = p$ for each $k$, by the same reason as previous case we know that $\{(d,i):d \in P_{2,k},1 \leqslant i \leqslant m\}$ is connected. Moreover, $P_{2,k} \cap P_{2,k+1} \neq \varnothing$ (since $p>1$) implies that $G_F[P_2]=\{(d,i):d \in \bigcup_{k=0}^{q-p}P_{2,k}, 1\leqslant i\leqslant m\}$ is also connected. 
\end{proof}

\begin{prop}\label{prop:mindiscon}
Suppose $F=F(N, \D)$ is a vertical-like fractal square with $\#\C(F)<\infty$ and $\#\C(\D)=m>1$. Let $P_0$ be a pillar with the least number of elements. If $\#\C(G_F)>m$ then $G_F[P_0]$ is disconnected.
\end{prop}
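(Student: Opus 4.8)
The plan is to prove the contrapositive: assuming $G_F[P_0]$ is connected, I will deduce $\#\C(G_F)=m$, which together with $\#\C(G_F)\geqslant m$ (Lemma~\ref{lem:2-2}) contradicts $\#\C(G_F)>m$. Write $V_i=\{(d,k):d\in\D_i,\ 1\leqslant k\leqslant m\}$, so that $V_1,\dots,V_m$ partition the vertex set of $G_F$. Recall that ``$V'$ is connected'' for a vertex subset $V'$ just means that $V'$ lies inside a single connected component of $G_F$; also, by the argument in the proof of Lemma~\ref{lem:2-2}, two vertices coming from distinct $V_i$'s always lie in different components of $G_F$. Hence it suffices to show that each $V_i$ lies in a single component of $G_F$, for then necessarily $\C(G_F)=\{V_1,\dots,V_m\}$.

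\emph{Step 1 (localizing pillars inside components of $\D$).} Since $P_0$ is a pillar with the fewest elements, Lemma~\ref{lem:2} gives that $G_F[P]$ is connected for \emph{every} pillar $P\subset\D$. I then claim that each pillar $P$ is contained in a single connected component of $\D$: indeed, if $P$ contained $d\in\D_i$ and $d'\in\D_{i'}$ with $i\neq i'$, then by the proof of Lemma~\ref{lem:2-2} the vertices $(d,1)$ and $(d',1)$ lie in different components of $G_F$, contradicting the connectedness of $G_F[P]$. Consequently, for each $i$ the set $\D_i$ is a disjoint union of pillars of $\D$, say $\D_i=P^{(1)}\sqcup\cdots\sqcup P^{(r)}$, and $V_i=\bigcup_{\ell=1}^{r}G_F[P^{(\ell)}]$.

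\emph{Step 2 (chaining the pillars of $\D_i$, and conclusion).} Form the auxiliary graph $H_i$ on the pillars $P^{(1)},\dots,P^{(r)}$ in which $P^{(\ell)}$ and $P^{(\ell')}$ are joined whenever there exist $d\in P^{(\ell)}$ and $d'\in P^{(\ell')}$ with $\varphi_d(F)\cap\varphi_{d'}(F)\neq\varnothing$. First, $H_i$ is connected: given $d,d'\in\D_i$, the connectedness of $\D_i$ yields a chain $d=d_1,\dots,d_n=d'$ in $\D_i$ with $\varphi_{d_t}(F)\cap\varphi_{d_{t+1}}(F)\neq\varnothing$, and the pillars containing $d_1,\dots,d_n$ (all contained in $\D_i$ by Step 1) form a walk in $H_i$ from the pillar of $d$ to the pillar of $d'$. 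Second, if $P^{(\ell)}$ and $P^{(\ell')}$ are adjacent in $H_i$, then the intersection $\varphi_d(F)\cap\varphi_{d'}(F)\neq\varnothing$ witnessing the adjacency forces $\varphi_d(F_k)\cap\varphi_{d'}(F_{k'})\neq\varnothing$ for some $k,k'$, so $G_F$ has the edge $(d,k)(d',k')$; since $G_F[P^{(\ell)}]$ and $G_F[P^{(\ell')}]$ are each connected (Step 1) and this edge joins them, $G_F[P^{(\ell)}]\cup G_F[P^{(\ell')}]$ lies in one component of $G_F$. Combining the two observations, $V_i=\bigcup_{\ell}G_F[P^{(\ell)}]$ lies in a single component of $G_F$; as noted at the outset this forces $\#\C(G_F)=m$, proving the contrapositive.

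\emph{Where the difficulty lies.} The crux is Step 1, which upgrades the hypothesis on the single minimal pillar $P_0$ into the global fact that pillars respect the partition $\C(\D)$ and hence that each $\D_i$ decomposes cleanly into pillars; everything after that is the standard ``connected via overlapping/adjacent blocks'' routine. The only mild points to watch are that the pillars in the chain of Step 2 genuinely lie in $\D_i$ (guaranteed by Step 1, so that $H_i$ is a graph on pillars of $\D_i$ rather than of $\D$), and that a single edge of $G_F$ between $G_F[P^{(\ell)}]$ and $G_F[P^{(\ell')}]$ is enough to merge the two blocks even though its endpoints need not share a pillar, precisely because ``connected'' for vertex subsets allows the connecting path to leave the subset.
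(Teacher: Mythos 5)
Your proposal is correct and takes essentially the same route as the paper's own proof: argue by contradiction/contrapositive, use Lemma~\ref{lem:2} to upgrade the connectedness of $G_F[P_0]$ to every pillar, and then chain through each $\D_i$ (via edges of $G_F$ linking consecutive pillar blocks $G_F[P_k]$, $G_F[P_{k+1}]$) to conclude $\#\C(G_F)=m$. The only, harmless, difference is in how the fact that each pillar lies in a single component of $\D$ is obtained: you deduce it from the connectedness of $G_F[P]$ together with the separation argument in Lemma~\ref{lem:2-2}, while the paper gets it from Proposition~\ref{prop:intersect} (so your version does not even invoke the vertical-like and $\#\C(F)<\infty$ hypotheses at this step); both are valid.
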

\begin{proof}
A direct application of Proposition~\ref{prop:intersect} shows that $F \cap \bigl( F+(0, 1) \bigr) \neq \varnothing$. Hence for each pillar $P \subset \D$, there exists a unique $\D_i$ such that $P \subset \D_i$.

We prove the proposition by contradiction. Suppose $G_F[P_0]$ is connected. Then by Proposition~\ref{lem:2}, $G_F[P]$ is connected for every pillar $P$. Fix any $1\leqslant i_0\leqslant m$, we claim that $\{(d,j):d \in \D_{i_0},1\leqslant j\leqslant m\}$ is connected. By the arbitrariness of $i_0$ this implies $\#\C(G_F)=m$ and leads to a contradiction.

Arbitrarily pick $d, d ' \in \D_{i_0}$ and $1 \leqslant j, j' \leqslant m$. By the connectedness of $\D_{i_0}$, we can find a sequence of $\{d_k\}_{k=1}^n \subset \D_{i_0}$ such that $d_1=d$, $d_n = d'$, and $(F+d_k) \cap (F+d_{k+1}) \neq \varnothing$ for each $k$. If we let $P_k$ denote the pillar to which $d_k$ belongs, then 
\begin{equation*} 
\Big( \bigcup_{d \in P_k} \varphi_d(F) \Big) \cap \Big( \bigcup_{d \in P_{k+1}} \varphi_d(F) \Big) \neq \varnothing.
\end{equation*}
Thus for each $1 \leqslant k \leqslant n-1$, there exists an edge joining one vertex in $G_F[P_k]$ and another vertex in $G_F[P_{k+1}]$. Thus their union is connected since each $G_F[P_k]$ is connected. In particular, $(d,j)$ and $(d',j')$ belongs to the same connected component of $G_F$, which proves our assertion.
\end{proof}

\section{Proof of Theorem~\ref{thm:main2}}


In this section, we always assume that $F=F(N, \D)$ is a fractal square with $\#\C(\D)=m>1$ and $\#\C(G_F)=M$, say $\C(G_F) = \{\c_1, \c_2, \cdots, \c_M\}$. 
We first prove Lemma~\ref{lem:3}.


\begin{proof}[Proof of Lemma~\ref{lem:3}]
For any $d \in \D$ and any $1 \leqslant i \leqslant m$, by the connectedness of $\D_i$ we see that $Nd+\D_i$ is a connected subset of $\D^*$. Also note that
\begin{equation}\label{eq:sec3-3}
\varphi_d(F_i) = \frac{F_i+d}{N} = \frac{\frac{F+\D_i}{N}+d}{N} = \frac{F+Nd+\D_i}{N^2} = \bigcup_{d' \in Nd+\D_i} \psi_{d'}(F).
\end{equation}
If $(d, i)$ and $(d', i')$ belong to the same connected component of $G_F$, i.e., there exists a sequence $\{(d_k, i_k)\}_{k=1}^n$ such that $(d_1, i_1) = (d, i)$, $(d_n, i_n) = (d', i')$, and $\varphi_{d_k}(F_{i_k}) \cap \varphi_{d_{k+1}}(F_{i_{k+1}}) \neq \varnothing$ for $1 \leqslant k \leqslant n-1$. By \eqref{eq:sec3-3} this is equivalent to
\begin{equation*}\label{eq:sec3-4}
\Big( \bigcup_{d' \in Nd_k+\D_{i_k}}\psi_{d'}(F) \Big) \cap \Big( \bigcup_{d' \in Nd_{k+1}+\D_{i_{k+1}}}\psi_{d'}(F) \Big) \neq \varnothing, \quad k=1,2,\ldots, n-1.
\end{equation*}
Since $Nd_k+\D_{i_k}$ is a connected subset of $\D^*$ for each $k$, this implies that $\bigcup_{k=1}^n (Nd_k+\D_{i_k})$ is also a connected subset of $\D^*$. In particular, $Nd+\D_i$($=Nd_1+\D_{i_1}$) and $Nd'+\D_{i'}$($=Nd_n+\D_{i_n}$) belong to the same connected component of $\D^*$. In conclusion, $\bigcup_{(d,i) \in \c} (Nd+\D_i)$ is a connected subset of $\D^*$ for any $\c \in \C(G_F)$. Hence $\#\C(\D^*) \leqslant \#\C(G_F)$.

On the other hand, note that for any pair of distinct $\c,\c' \in \C(G_F)$ we have 
\[ \Big( \bigcup_{(d,i) \in \c} \varphi_d(F_i) \Big) \cap \Big( \bigcup_{(d,i) \in \c'} \varphi_d(F_i) \Big) = \varnothing. \]
By \eqref{eq:sec3-3}, this implies
\begin{equation*} 
\Big( \bigcup_{(d,i) \in \c}\bigcup_{d' \in Nd+\D_i} \psi_{d'}(F) \Big) \cap \Big( \bigcup_{(d,i) \in \c'}\bigcup_{d' \in Nd+\D_i} \psi_{d'}(F) \Big) = \varnothing, 
\end{equation*}
and therefore $\bigcup_{(d,i) \in \c} (Nd+\D_i)$ and $\bigcup_{(d,i) \in \c'} (Nd+\D_i)$ must belong to different connected components of $\D^*$. Thus we have $\#\C(\D^*) \geqslant \#\C(G_F)$.

From above arguments, we see that $\#\C(\D^*) = \#\C(G_F) = M$, say $\C(\D^*) = \{\D^*_1, \D^*_2, \ldots, \D^*_M\}$, and rearranging if necessary we have $\D^*_j =  \bigcup_{(d,i) \in \c_j} (Nd+\D_i)$ for $1 \leqslant j \leqslant M$. Combining with \eqref{eq:sec3-3},
\[ \bigcup_{(d,i) \in \c_j} \varphi_d(F_i) = \bigcup_{(d,i)\in\c_j}\bigcup_{d' \in Nd+\D_i} \psi_{d'}(F) = \bigcup_{d' \in \D^*_j} \psi_{d'}(F). \]
\end{proof}


\begin{lemma}\label{lem:3-8}
	$\#\C(F)\geqslant \#\C(G'_F) \geqslant \#\C(G_F)$.
\end{lemma}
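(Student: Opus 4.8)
The plan is to mimic the proof of Lemma~\ref{lem:2-2}, using Lemma~\ref{lem:3} to transfer the combinatorial picture from $G_F$ up to the level-2 digital set $\D^*$, and then to compare $G'_F$ against $\D^*$ and against $\C(F)$ in exactly the same two-step fashion.

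First I would prove $\#\C(G'_F)\geqslant\#\C(G_F)$. By Lemma~\ref{lem:3} we have $\#\C(\D^*)=\#\C(G_F)=M$ with $\C(\D^*)=\{\D^*_1,\ldots,\D^*_M\}$, and the sets $F^*_j=\bigcup_{d'\in\D^*_j}\psi_{d'}(F^*)$ satisfy $F^*_i\cap F^*_j=\varnothing$ for $i\neq j$ (this is the level-2 analogue of the disjointness $F_i\cap F_j=\varnothing$, and follows from the definition of connectedness in $\D^*$; alternatively it is built into \eqref{eq:ej}). Now observe that $\varphi_d(F)=\bigcup_{j=1}^M\varphi_d(F^*_j)$ for every $d\in\D$, so the vertex set $\{\langle d,j\rangle:d\in\D,1\leqslant j\leqslant M\}$ of $G'_F$ is genuinely refining the level-1 decomposition. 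If $\langle d_1,j_1\rangle$ and $\langle d_2,j_2\rangle$ lie in the same connected component of $G'_F$, then chaining the defining intersections $\varphi_{d_k}(F^*_{j_k})\cap\varphi_{d_{k+1}}(F^*_{j_{k+1}})\neq\varnothing$ along a path gives $\varphi_{d_1}(F)\cap\cdots$ connectivity at the coarser level, forcing $d_1$ and $d_2$ to lie in the same connected component of $\D$; conversely, one checks that whenever $\D_i\neq\D_j$ the corresponding $\varphi$-images are disjoint, so distinct components of $\D$ force distinct components of $G'_F$. Combined with the refinement, a clean way to get the inequality $\#\C(G'_F)\geqslant M$ is to exhibit, for each $1\leqslant j_0\leqslant M$, a vertex $\langle d,j\rangle$ whose $G'_F$-component is contained in a single ``level-1 component block''; more directly, I would argue that the partition of the vertices of $G'_F$ induced by $G_F$-components is coarser than the partition into $G'_F$-components, because two vertices $\langle d_1,j_1\rangle,\langle d_2,j_2\rangle$ joined by an edge of $G'_F$ have $\varphi_{d_1}(F^*_{j_1})$ and $\varphi_{d_2}(F^*_{j_2})$ meeting, hence (since $F^*_{j_k}\subset\varphi_{\text{something}}(F)$ lies inside one $\varphi_{d_k}$-image that itself sits in one $G_F$-component by Lemma~\ref{lem:3}) they land in the same $G_F$-component; this yields a well-defined surjection $\C(G'_F)\twoheadrightarrow\C(G_F)$, hence $\#\C(G'_F)\geqslant\#\C(G_F)$.

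Next I would prove $\#\C(F)\geqslant\#\C(G'_F)$ by the same mechanism as the first half of Lemma~\ref{lem:2-2}: if $\langle d_1,j_1\rangle$ and $\langle d_2,j_2\rangle$ lie in different connected components of $G'_F$, then any $x\in\varphi_{d_1}(F^*_{j_1})$ and $y\in\varphi_{d_2}(F^*_{j_2})$ lie in different connected components of $F$. To see this one checks that the union of all $\varphi_d(F^*_j)$ over a single $G'_F$-component $\c'$ is relatively clopen in $F$: it is closed (finite union of compacts) and its complement, the union over the other components, is also closed, and together they exhaust $F=\bigcup_{d\in\D}\bigcup_{j=1}^M\varphi_d(F^*_j)$ — here I would use that for $\langle d_1,j_1\rangle,\langle d_2,j_2\rangle$ in different $G'_F$-components the sets $\varphi_{d_1}(F^*_{j_1})$ and $\varphi_{d_2}(F^*_{j_2})$ are disjoint, which is immediate from the edge definition of $G'_F$. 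Hence each $G'_F$-component gives a union of full connected components of $F$, and the map sending a connected component of $F$ to the $G'_F$-component containing it is well-defined and surjective, giving $\#\C(F)\geqslant\#\C(G'_F)$.

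I expect the main obstacle to be the bookkeeping in the first inequality: one must be careful that $G'_F$ is built from $F^*_j$ (the level-2 pieces attached to $\D$, not to $\D^*$) while Lemma~\ref{lem:3} describes the $F^*_j$ as unions of $\psi_{d'}(F^*)$, so identifying which $\varphi_d(F^*_j)$ sit inside which single $G_F$-component requires unwinding \eqref{eq:ej} and \eqref{eq:sec3-3} correctly and invoking Corollary~\ref{cor:ejfij}. Once the surjection $\C(G'_F)\to\C(G_F)$ is set up cleanly, the rest is a routine repetition of the Lemma~\ref{lem:2-2} argument one level up, so I would state the proof compactly: ``The second inequality follows exactly as in Lemma~\ref{lem:2-2} with $\D$ replaced by $\D^*$ and $F_i$ by $F^*_j$, using Lemma~\ref{lem:3}; the first is proved as the first half of Lemma~\ref{lem:2-2} with $F_i$ replaced by $F^*_j$.''
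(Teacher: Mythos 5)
Your proposal is correct and follows essentially the same route as the paper: the inequality $\#\C(F)\geqslant\#\C(G'_F)$ is the Lemma~\ref{lem:2-2} disjointness argument with $\varphi_d(F_i)$ replaced by $\varphi_d(F^*_j)$, and $\#\C(G'_F)\geqslant\#\C(G_F)$ comes from Corollary~\ref{cor:ejfij}: since each $F^*_j$ lies in a unique $F_{i_j}$, every edge of $G'_F$ projects to an edge of $G_F$, giving exactly the surjection $\C(G'_F)\to\C(G_F)$ you describe. The only wording to repair is the parenthetical claim that $F^*_{j_k}$ sits inside a single $\varphi_{d_k}$-image of $F$ (it need not, as a component $\c_{j_k}$ of $G_F$ may involve several digits; what you need, and correctly invoke, is $F^*_{j_k}\subset F_{i_{j_k}}$), and note that your opening comparison with components of $\D$ only yields $\#\C(G'_F)\geqslant\#\C(\D)$, which is superseded by the surjection argument anyway.
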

\begin{proof}
By the same argument as in the proof of Lemma~\ref{lem:2-2}, we have $\#\C(F)\geqslant \#\C(G'_F)$.

In view of Lemma~\ref{lem:3}, suppose $\D^*=\{\D^*_1, \D^*_2, \ldots, \D^*_M\}$. Also by Corollary~\ref{cor:ejfij}, for any $ 1 \leqslant j \leqslant M$ there exists a unique $1 \leqslant i_j \leqslant m$ such that $F^*_j \subset F_{i_j}$. In particular, $\varphi_d(F^*_j) \subset \varphi_d(F_{i_j})$ for any $d \in \D$. Thus if $\varphi_d(F^*_j) \cap \varphi_{d'}(F^*_{j'})\neq\varnothing$ then we must have $\varphi_d(F_{i_j}) \cap \varphi_{d'}(F_{i_{j'}}) \neq \varnothing$. Combining this fact with definitions of $G_F$ and $G'_F$, we obtain $\#\C(G'_F) \geqslant \#\C(G_F)$.
\end{proof}

In view of Remark~\ref{rem:2}, if $\#\C(F)<\infty$, one can focus on cases when $\D^*_1, \ldots, \D^*_M$ are all vertical-like. Further, we can still arrange $\{\D^*_j\}_{j=1}^M$ from left to right, and in this case we also, for convenience, say that $\{\c_j\}_{j=1}^M$ is \emph{arranged from left to right}.

Similarly, we can say something on the existence of specific edges in $G'_F$ as in Proposition~\ref{prop:intersect}.
\begin{coro}\label{cor:kintersect}
Suppose $F$ is a vertical-like fractal square with $\#\C(F)<\infty$ and $\#\C(\D)=m\geqslant 2$. Assume that $\{\c_j\}_{j=1}^M$ is arranged from left to right, then
\begin{equation*} 
F^*_1 \cap \bigl( F^*_1+(0, 1) \bigr) \neq \varnothing, \quad  F^*_M \cap \bigl( F^*_M+(0, 1) \bigr) \neq \varnothing. 
\end{equation*}
\end{coro}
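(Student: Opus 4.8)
\textbf{Proof proposal for Corollary~\ref{cor:kintersect}.}

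The plan is to reduce this to Proposition~\ref{prop:intersect} by exploiting Remark~\ref{rem:2}: since $\#\C(F)<\infty$, the fractal square $F$ can be regarded as generated by the level-$2$ digit set $\D^* = N\D+\D$ together with the maps $\psi_{d'}(x)=(x+d')/N^2$, and by Lemma~\ref{lem:3} the connected components of $\D^*$ are exactly $\D^*_1,\ldots,\D^*_M$ with $F^*_j = \bigcup_{d'\in\D^*_j}\psi_{d'}(F^*)$. Moreover, by Remark~\ref{rem:2} (applied with $n=2$), the hypothesis $\#\C(F)<\infty$ forces each $\D^*_j$ to be vertical-like (we are in the vertical-like, not horizontal-like, situation because $F$ itself is assumed vertical-like, and Fact~\ref{fact:1} rules out the mixed case). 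So $F^* = F(N^2, \D^*)$ is itself a vertical-like fractal square with $\#\C(F^*)=\#\C(F)<\infty$ and $\#\C(\D^*)=M$, and $\{\D^*_j\}_{j=1}^M$ is arranged from left to right by our convention. Applying Proposition~\ref{prop:intersect} verbatim to $F^*$ in place of $F$ (with $N^2$ in place of $N$ and $M$ in place of $m$) yields
\[
F^*_1 \cap \bigl(F^*_1+(0,1)\bigr)\neq\varnothing \quad\text{and}\quad F^*_M\cap\bigl(F^*_M+(0,1)\bigr)\neq\varnothing,
\]
which is exactly the assertion.

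The only point needing a little care is that Proposition~\ref{prop:intersect} is stated for a fractal square of the form $F(N,\D)$ where $\D\subset\{0,\ldots,N-1\}^2$ and the contracting maps are $x\mapsto (x+d)/N$; one must check that $F^*$, $\D^*$, $N^2$, $\psi_{d'}$ genuinely fit this template. This is immediate: $\D^*\subset\{0,\ldots,N^2-1\}^2$, $\psi_{d'}(x)=(x+d')/N^2$, and $F^*$ satisfies $F^*=\bigcup_{d'\in\D^*}\psi_{d'}(F^*)$, so $F^*$ is a bona fide fractal square with base $N^2$. The notions ``vertical-like,'' ``arranged from left to right,'' and ``pillar'' all transfer without change once $N$ is replaced by $N^2$, and Proposition~\ref{prop:intersect} is a statement purely about such objects. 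Thus there is no real obstacle — the content is entirely in the already-established Proposition~\ref{prop:intersect} and Remark~\ref{rem:2}; the corollary is their formal combination.

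I expect the main (minor) obstacle to be purely expository: making sure the reader accepts that the geometric iterated construction of $F^*$ from $Q_0=[0,1]^2$ via $Q^*_{n+1}=(Q^*_n+\D^*)/N^2$ produces the same limit set $F$, and that ``$\{\c_j\}_{j=1}^M$ arranged from left to right'' is by definition the same as ``$\{\D^*_j\}_{j=1}^M$ arranged from left to right,'' so that the indices $1$ and $M$ match up correctly on both sides. Once that bookkeeping is in place, the proof is a single sentence invoking Proposition~\ref{prop:intersect}.
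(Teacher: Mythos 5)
Your proposal is correct and follows essentially the same route as the paper: the paper's own proof likewise observes that $\{\D^*_j\}_{j=1}^M$ is arranged from left to right (so $F^*_1\subset F_1$, $F^*_M\subset F_m$) and then invokes Proposition~\ref{prop:intersect} for the level-$2$ representation $F=F(N^2,\D^*)$; you merely spell out the bookkeeping more explicitly. The only loose point is your appeal to Fact~\ref{fact:1} (which is stated for components of $\D$, not $\D^*$) to rule out the horizontal-like case; the cleaner justification is Remark~\ref{rem:2} together with the observation that each $\D^*_j$ lies inside some $F_{i_j}$, which cannot meet both vertical edges of $[0,1]^2$ when $m\geqslant 2$.
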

\begin{proof}
Since the connected components of $\D^*$ are arranged from left to right, we have $F^*_1\subset F_1$ and $F^*_M \subset F_m$. Thus the desired result follows immediately from Proposition~\ref{prop:intersect}.
\end{proof}

The ``if\," part of Theorem~\ref{thm:main2} is a direct result of  the following proposition.

\begin{prop}\label{prop:last1}
If $\#\C(G'_F) = \#\C(G_F)$, then $\#\C(F)=\#\C(G_F)$.
\end{prop}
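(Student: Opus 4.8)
The goal is to show that if $G_F$ and $G'_F$ have the same number of connected components, then $\#\C(F)$ also equals this number. Since we always know $\#\C(F) \geqslant \#\C(G'_F) \geqslant \#\C(G_F)$ from Lemma~\ref{lem:3-8}, the content is the reverse inequality $\#\C(F) \leqslant \#\C(G_F)$ under the hypothesis. The natural strategy, mirroring the proof of Theorem~\ref{thm:main1}, is to apply the nested-intersection criterion of Lemma~\ref{lem:Mun}: it suffices to exhibit, for each $j$, a decreasing sequence of compact \emph{connected} sets whose intersection is $F^*_j$, so that each $F^*_j$ is connected; since $F = \bigsqcup_{j=1}^M F^*_j$ with the $F^*_j$ pairwise disjoint (they sit inside disjoint unions of squares, by the $\D^*$-decomposition of Lemma~\ref{lem:3}), this forces $\#\C(F) = M = \#\C(G_F)$.

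\textbf{Key steps.} First, I would record the hypothesis in usable form: since $\#\C(G'_F) = \#\C(G_F) = M$, and the level-2 graph $G'_F$ is built from the $\D^*$-pieces $F^*_j$ exactly as $G_F$ was built from the $F_i$, the analogue of equation~\eqref{eq:add1} holds for $G'_F$ — that is, $\C(G'_F) = \{\{\langle d,j\rangle : d \in \D^*_1\big|_{\text{proj}}\}, \ldots\}$; more precisely, each connected component of $G'_F$ consists of all $\langle d,j\rangle$ with $d$ ranging over one component of the ``first-coordinate'' digit set and $j$ over a single fixed value (this is the $G'_F$-version of the remark following Corollary~\ref{cor:ejfij}). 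Equivalently, $\#\C(G'_F) = M$ says each $F^*_j$ does not split further at the next level. Second, set $R_0^{(j)} := \bigcup_{d' \in \D^*_j} \psi_{d'}([0,1]^2)$ and recursively $R_{n+1}^{(j)} := \bigcup_{d' \in \D^*_j} \psi_{d'}(R_n^{(\bullet)})$ appropriately — that is, I would define, in analogy with \eqref{eq:sec1-2} and \eqref{eq:qn}, the level-$n$ approximations $\widetilde Q_n\big|_{\D^*_j}$ of $F^*_j$ in the $\psi$-iteration, so that $F^*_j = \bigcap_n \widetilde Q_n\big|_{\D^*_j}$. Third, prove by induction on $n$ that each $\widetilde Q_n\big|_{\D^*_j}$ is path-connected: the base case is the connectedness of $\D^*_j$; for the inductive step, one writes $\widetilde Q_{n+1}\big|_{\D^*_j} = \bigcup_{d' \in \D^*_j}\bigcup_{k} \psi_{d'}(\widetilde Q_n\big|_{\D^*_k})$ and uses the hypothesis-derived structure of $\C(G'_F)$ — namely that $\{\langle d', k\rangle : d' \in \D^*_j, 1 \leqslant k \leqslant M\}$ (suitably interpreted through the projection identifying $\D^*_j$ with a subset of $\D$) is connected in $G'_F$ — to link, via the inductive hypothesis and the fact that $F^*_k \subset \widetilde Q_n\big|_{\D^*_k}$, any two of the path-connected pieces $\psi_{d'}(\widetilde Q_n\big|_{\D^*_k})$ through a chain with nonempty consecutive intersections. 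This is the exact structural transcription of the induction in the proof of Theorem~\ref{thm:main1}, now one level up. Finally, conclude via Lemma~\ref{lem:Mun} that each $F^*_j$ is connected, and by disjointness that $\#\C(F) = M$.

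\textbf{Main obstacle.} The delicate point is the bookkeeping of indices: in the $\psi$-iteration, the ``digit set at the first level'' is $\D^*$, but when one iterates $\psi_{d'}$ a second time one is looking at $\psi_{d'}(F^*_k)$ for $d' \in \D^*_j$ and $k \in \{1,\dots,M\}$, and one must be careful that the graph $G'_F$ — whose vertices are indexed by $\langle d, j\rangle$ with $d \in \D$, \emph{not} $d \in \D^*$ — genuinely governs these intersections. The resolution is the observation (already implicit in the construction of $G'_F$ and in \eqref{eq:sec3-3}) that $\psi_{d'}(F^*_k)$ for $d' \in \D^*$ is a subpiece of some $\varphi_d(F^*_k)$ with $d \in \D$, so intersection patterns of the finer pieces are controlled by the edges of $G'_F$; combined with the hypothesis $\#\C(G'_F) = M$ forcing the ``no further splitting'' structure, the induction goes through. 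A secondary check is that the hypothesis is used at the right moment — it is precisely what guarantees, in the inductive step, that the pieces sitting over a single $\D^*_j$ cannot fall into distinct path-components of $\widetilde Q_{n+1}$. I expect the proof to be short once the index conventions are pinned down, since everything reduces to the template already established for Theorem~\ref{thm:main1}.
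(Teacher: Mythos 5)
Your high-level strategy is the right one and matches the paper's: reduce to showing that each $F^*_j$ is connected via Lemma~\ref{lem:Mun}, by an induction on the connectedness of finite-stage approximations in the style of Theorem~\ref{thm:main1}. But the concrete inductive scheme you set up does not work, and the place where it breaks is exactly the place you flagged as the ``main obstacle.'' You iterate with the maps $\psi_{d'}$, $d'\in\D^*$, and group the level-$(n+1)$ pieces as $\widetilde Q_{n+1}\big|_{\D^*_j}=\bigcup_{d'\in\D^*_j}\bigcup_k\psi_{d'}(\widetilde Q_n\big|_{\D^*_k})$. To chain these pieces together you need, for each $j$, chains $(d'_1,k_1),\ldots,(d'_p,k_p)$ with $d'_t\in\D^*_j$ and $\psi_{d'_t}(F^*_{k_t})\cap\psi_{d'_{t+1}}(F^*_{k_{t+1}})\neq\varnothing$; that is connectedness of the \emph{level-1 graph of the system} $(N^2,\D^*)$, which (by the argument of Lemma~\ref{lem:3}) amounts to $Q_4$ having only $M$ components --- one full level beyond what the hypothesis $\#\C(G'_F)=M$ supplies (that hypothesis controls $Q_3$). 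Your proposed resolution, that each $\psi_{d'}(F^*_k)$ sits inside some $\varphi_d(F^*_{k'})$ so that ``intersection patterns of the finer pieces are controlled by the edges of $G'_F$,'' goes in the wrong direction: two fine pieces lying in intersecting coarse pieces need not themselves be chainable, and indeed the possibility that they are not is precisely the splitting phenomenon the proposition must rule out. Relatedly, there is no ``projection identifying $\D^*_j$ with a subset of $\D$,'' and your description of $\C(G'_F)$ as having $j$ fixed on each component is false (see Figure~7 in Example~\ref{exa:last}); the correct structure forced by the hypothesis is $\c'_k=\{\langle d,j\rangle:\varphi_d(F^*_j)\subset F^*_k\}$, in which both $d$ and $j$ vary.

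The repair is to iterate one $\varphi$-level at a time and to group by the components of $G'_F$ rather than by the $\D^*_j$. Set $A_1=Q_2$ and $A_{n+1}=\bigcup_{d\in\D}\varphi_d(A_n)$; the pieces at the next stage are $\varphi_d(A_n|_k)$ with $\langle d,k\rangle$ ranging over the vertex set of $G'_F$, so the edges of $G'_F$ are now exactly the right tool. Cluster them as $A_{n+1}|_j:=\bigcup_{\langle d,k\rangle\in\c'_j}\varphi_d(A_n|_k)$. The hypothesis, in the form \eqref{eq:cgf}, gives $F^*_j=\bigcup_{\langle d,k\rangle\in\c'_j}\varphi_d(F^*_k)$, and together with the inductively maintained inclusion $F^*_k\subset A_n|_k$ this both supplies the nonempty consecutive intersections needed to show each $A_{n+1}|_j$ is connected and reproduces the inclusion $F^*_j\subset A_{n+1}|_j$ that lets the induction continue. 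This self-reproducing inclusion is the one ingredient your write-up is missing; without it the induction cannot get past $n=1$.
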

\begin{proof}
By Lemma~\ref{lem:3}, we again set $\C(\D^*)=\{\D^*_1,\ldots,\D^*_M\}$ with $\D^*_1,\ldots,\D^*_M$ arranged from left to right. From \eqref{eq:ej} and Corollary~\ref{cor:ejfij}, for any $d\in\D$ and $1\leq j\leq M$, there is a unique $k$ such that $\varphi_d(F_j^*) \subset F^*_k$. Since $F^*_1,\ldots,F^*_M$ are disjoint, it follows from $\#\C(G'_F) = \#\C(G_F)$ that 
\begin{equation}\label{eq:cgf}
\C(G'_F) = \big\{\{\langle d,j \rangle: \varphi_d(F^*_j) \subset F^*_1 \}, \ldots, \{\langle d,j \rangle: \varphi_d(F^*_j) \subset F^*_M\} \big\}.
\end{equation}
Denote $\c'_k:=\{\langle d,j \rangle: \varphi_d(F^*_j) \subset F^*_k\}$, $1\leqslant k\leqslant M$. Note that \eqref{eq:cgf} also implies that $F^*_k=\bigcup_{\langle d,j \rangle \in \c'_k} \varphi_d(F^*_j)$.

Set $A_1=\bigcup_{d\in\D^*_j}\psi_d([0,1]^2)$ and recursively define $A_{n+1}:= \bigcup_{d \in \D} \varphi_d(A_n)$, $n \in \Z^+$. Since $A_1=Q_2$, we see that $A_n=Q_{n+1}$ and hence $\bigcap_{n=1}^\infty A_n=F$. We claim that $\#\C(A_n)\leqslant M$ for every $n$, then it follows from Lemma~\ref{lem:Mun} that $\#\C(F)\leqslant M$, and by Lemma~\ref{lem:3-8} we have $\#\C(F)=M=\#\C(G_F)$.

Define $A_1|_j:=\bigcup_{d\in\D_j^*}\psi_d([0,1]^2)$ for $1\leqslant j\leqslant M$. By the connectedness of $\D_j^*$ we know that every $A_1|_j$ is connected. Note that 
\[ A_1 = \bigcup_{d \in \D^*}\psi_d([0,1]^2) = \bigcup_{j=1}^M\bigcup_{d\in\D_j^*}\psi_d([0,1]^2) = \bigcup_{j=1}^M A_1|_j. \] 
This implies that
\begin{align*}
A_2=\bigcup_{d \in \D} \varphi_d(A_1) = \bigcup_{d \in \D}\bigcup_{j=1}^M \varphi_d	(A_1|_j) = \bigcup_{\langle d,j \rangle} \varphi_d(A_1|_j) = \bigcup_{k=1}^M\bigcup_{\langle d,j\rangle \in \c'_k} \varphi_d(A_1|_j).
\end{align*}
Fix any $1\leqslant k\leqslant M$. For every pair of $\langle d,j \rangle, \langle d',j' \rangle \in \c'_k$, by the construction of $G'_F$ we can find $\{\langle d_t,j_t \rangle\}_{t=1}^n \subset \c'_k$ such that $\langle d_1,j_1 \rangle=\langle d,j \rangle$, $\langle d_n,j_n \rangle=\langle d',j' \rangle$, and $\varphi_{d_t}(F^*_{j_t}) \cap \varphi_{d_{t+1}}(F^*_{j_{t+1}}) \neq \varnothing$ for every $1\leqslant t\leqslant n-1$. Also $F^*_{j_t}=\bigcup_{d\in \D^*_{j_t}}\psi_d(F^*) \subset A_1|_{j_t}$, hence 
\[ \varphi_{d_t}(A_1|_{j_t}) \cap \varphi_{d_{t+1}}(A_1|_{j_{t+1}}) \neq \varnothing, \quad \forall 1\leqslant t\leqslant n-1. \] 
Since each $A_1|_{j_t}$ is connected, this implies that $\varphi_d(A_1|_j)$ and $\varphi_{d'}(A_1|_{j'})$ lie in the same connected component of $A_2$. We can conclude now that $\bigcup_{\langle d,j\rangle \in \c'_k} \varphi_d(A_1|_j)$ is connected, and therefore $\#\C(A_2)\leqslant M$.

Let $A_2|_k:=\bigcup_{\langle d,j\rangle \in \c'_k} \varphi_d(A_1|_j)$, $1\leqslant k\leqslant M$. From the above proof, each $A_2|_k$ is a connected set and $A_2=\bigcup_{k=1}^M A_2|_k$. Note that 
\[ A_3=\bigcup_{d\in\D} \varphi_d(A_2) = \bigcup_{d\in\D}\bigcup_{k=1}^M\varphi_d(A_2|_k) =\bigcup_{t=1}^M\bigcup_{\langle d,k\rangle \in \c'_t} \varphi_d(A_2|_k). \]
Also we can deduce from the fact $F^*_j \subset A_1|_j$ that $A_2|_k = \bigcup_{\langle d,j\rangle \in \c'_k} \varphi_d(A_1|_j) \supset \bigcup_{\langle d,j\rangle \in \c'_k} \varphi_d(F^*_j) = F^*_k$ (see the last equality in the beginning of this proof). Applying the same argument we can show that $\#\C(A_3) \leqslant M$. This procedure can go on and we finally obtain that $\#\C(A_n)\leqslant M$ for each $n \in \Z^+$, which is all we need.
\end{proof}




In order to prove the ``only if\," part of Theorem~\ref{thm:main2}, we shall start with introducing a few notations and then give an important observation.
Comparing with the previous notation $G_F[P]$, for any pillar $P \subset \D$, we let $G'_F[P]$ denote $\{\langle d, j\rangle: d \in P, 1 \leqslant j \leqslant M\}$, which is a subset of the vertex set of $G'_F$. Suppose $\{\D_i\}_{i=1}^m$ and $\{\c_j\}_{j=1}^M$ are both arranged from left to right.  By Proposition~\ref{prop:intersect}, vertices $\{(d, 1)\}_{d \in P}$ are connected together in $G_F[P]$, and similarly so are $\{(d, m)\}_{d \in P}$. We always denote the connected components of $G_F[P]$ they belong to by $\c_{P, L}$ and $\c_{P, R}$ respectively. Analogously, using  Corollary~\ref{cor:kintersect}, we denote by $\c'_{P, L}$ (resp. $\c'_{P, R}$) the connected components of $G'_F[P]$ containing $\{\langle d,1 \rangle\}_{d\in P}$ (resp. $\{\langle d,M \rangle\}_{d\in P}$).

For example, for the fractal square $F$ in Example~\ref{exa:last} and the pillar $P=\{(1, 0), (1, 1)\}$, we have (see Figure~\ref{fig:se-s'e} for an illustration)
\begin{equation*}
\begin{gathered}
\c_{P,L} = \big\{\big((1,0),1\big),\big((1,1),1\big),\big((1,1),2\big)\big\},\,\, \c_{P,R} = \big\{ \big((1,0),2\big),\big((1,0),3\big),\big((1,1),3\big) \big\}, \\
\c'_{P,L} = \big\{ \big\langle(1,1),3\big\rangle,\big\langle(1,s),t\big\rangle,s=0,1,t=1,2 \big\},\,\, \c'_{P,R} = \big\{ \big\langle(1,0),3\big\rangle,\big\langle(1,0),4\big\rangle,\big\langle(1,1),4\big\rangle \big\}.
\end{gathered}	
\end{equation*}
\vspace{-0.5cm}
\begin{figure}[htbp]
\centering \includegraphics[scale=0.32]{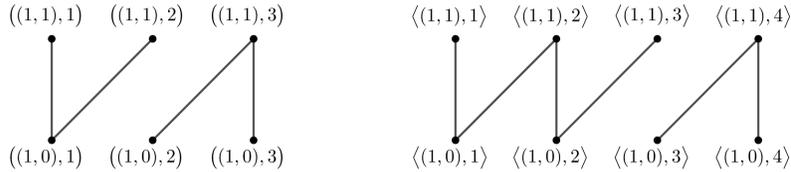}
\vspace{-0.6cm}
\caption{$G_F[P]$ and $G'_F[P]$ for $P=\{(1, 0), (1, 1)\}$ in Example~\ref{exa:last}}\label{fig:se-s'e}
\end{figure}

\begin{lemma}\label{lem:4}
Suppose $F=F(N, \D)$ is a vertical-like fractal square with $\#\C(F)<\infty$ and $\#\C(G_F)>\#\C(\D)>1$. Let $P_0 \subset \D$ be a pillar with the least number of elements. If $\#P_0 < N$ then $\#\C(G_F[P_0]) = \#\C(G'_F[P_0])=2$. Moreover,
\begin{equation*} 
\bigcup_{(d,i) \in \c_{P_0,L}} \varphi_d(F_i) = \bigcup_{\langle d,j \rangle \in \c'_{P_0,L}} \varphi_{d}(F^*_j), \quad \bigcup_{(d,i) \in \c_{P_0,R}} \varphi_d(F_i) = \bigcup_{\langle d,j \rangle \in \c'_{P_0,R}} \varphi_{d}(F^*_j). 
\end{equation*}
\end{lemma}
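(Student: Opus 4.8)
The plan is to prove the two cardinality identities first and then read off the two set equalities from them.

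\emph{The count $\#\C(G_F[P_0])=2$.} Since $\#\C(G_F)>m$ and $P_0$ has the least number of elements among pillars, Proposition~\ref{prop:mindiscon} gives that $G_F[P_0]$ is disconnected, so $\#\C(G_F[P_0])\geqslant 2$; and since $F$ is vertical-like with $\#\C(F)<\infty$ and $\{\D_i\}_{i=1}^m$ is arranged from left to right, Proposition~\ref{prop:intersect} supplies the edges that chain $\{(d,1):d\in P_0\}$ together inside $\c_{P_0,L}$ and $\{(d,m):d\in P_0\}$ together inside $\c_{P_0,R}$. It remains to exclude any other component. Suppose for contradiction that $\c'$ is a connected component of $G_F[P_0]$ different from $\c_{P_0,L}$ and $\c_{P_0,R}$. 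If some $(d,1)$ (resp. $(d,m)$) with $d\in P_0$ lay in $\c'$, then the chaining edges just mentioned would force $\c'=\c_{P_0,L}$ (resp. $\c_{P_0,R}$); hence every vertex of $\c'$ has the form $(d,i)$ with $d\in P_0$ and $2\leqslant i\leqslant m-1$, so in particular $m\geqslant 3$. Writing $P_0=\{(a,b),(a,b+1),\dots,(a,b+\#P_0-1)\}$, the set $W:=\bigcup_{(d,i)\in\c'}\varphi_d(F_i)$ then lies in $\bigcup_{d\in P_0}\varphi_d([0,1]^2)=[\tfrac aN,\tfrac{a+1}N]\times[\tfrac bN,\tfrac{b+\#P_0}N]$, a vertical strip of height $\#P_0/N<1$; because $\#P_0<N$, this strip cannot meet both $\{y=0\}$ and $\{y=1\}$. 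As $\c'$ is a connected component of $G_F[P_0]$, the set $W$ is relatively clopen in $S:=\bigcup_{d\in P_0}\varphi_d(F)$. I would then argue — this is the technical core — that $W$ is a union of connected components of $F$; granting this, each such component is vertical-like by Corollary~\ref{cor:cooffver}, hence meets both $\{y=0\}$ and $\{y=1\}$, which is impossible for a subset of the strip above. (Should the passage from ``clopen in $S$'' to ``union of components of $F$'' be obstructed by pieces of $F$ that enter and leave the strip through its two vertical sides, the fallback is the splitting cascade of the proof of Proposition~\ref{prop:verhor}: the part of $F$ indexed by $\c'$ would then, at every stage of the geometric construction, spawn a new connected component confined to a smaller copy of the strip, forcing $\#\C(F)=\infty$.) With $\c'$ ruled out, $\C(G_F[P_0])=\{\c_{P_0,L},\c_{P_0,R}\}$, and these are distinct since $G_F[P_0]$ is disconnected; hence $\#\C(G_F[P_0])=2$.

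\emph{The count $\#\C(G'_F[P_0])=2$.} By Corollary~\ref{cor:ejfij} there is for each $j$ a unique $i_j$ with $F^*_j\subseteq F_{i_j}$, and since $\{\D_i\}$ and $\{\c_j\}$ are both arranged from left to right, $j\mapsto i_j$ is non-decreasing with $i_1=1$ and $i_M=m$. Thus $h:\langle d,j\rangle\mapsto(d,i_j)$ maps the vertices of $G'_F[P_0]$ \emph{onto} those of $G_F[P_0]$ and carries each edge to an edge or to a single vertex (if $\varphi_d(F^*_j)\cap\varphi_{d'}(F^*_{j'})\neq\varnothing$ then $\varphi_d(F_{i_j})\cap\varphi_{d'}(F_{i_{j'}})\neq\varnothing$), so connected subsets of $G'_F[P_0]$ have connected images; in particular $\#\C(G'_F[P_0])\geqslant\#\C(G_F[P_0])=2$, and $h(\c'_{P_0,L})\subseteq\c_{P_0,L}$, $h(\c'_{P_0,R})\subseteq\c_{P_0,R}$ because $h$ sends $\langle d,1\rangle\mapsto(d,1)$ and $\langle d,M\rangle\mapsto(d,m)$. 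For the reverse bound I repeat the strip argument of the previous paragraph with the family $\{F^*_j\}_{j=1}^M$ in place of $\{F_i\}_{i=1}^m$: here $\bigcup_{\langle d,j\rangle:\,d\in P_0}\varphi_d(F^*_j)=\bigcup_{d\in P_0}\varphi_d(F)=S$, the leftmost and rightmost ``columns'' of $G'_F[P_0]$ are chained by Corollary~\ref{cor:kintersect}, and a third component would again trap a union of vertical-like components of $F$ in a strip of height $<1$ missing $\{y=0\}$ or $\{y=1\}$. Hence $\#\C(G'_F[P_0])=2$ with components $\c'_{P_0,L}$ and $\c'_{P_0,R}$.

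\emph{The set equalities.} Set $S_L:=\bigcup_{(d,i)\in\c_{P_0,L}}\varphi_d(F_i)$, $S_R:=\bigcup_{(d,i)\in\c_{P_0,R}}\varphi_d(F_i)$, $S'_L:=\bigcup_{\langle d,j\rangle\in\c'_{P_0,L}}\varphi_d(F^*_j)$, $S'_R:=\bigcup_{\langle d,j\rangle\in\c'_{P_0,R}}\varphi_d(F^*_j)$. By the two counts just established, $S=S_L\sqcup S_R=S'_L\sqcup S'_R$ are two partitions of $S$ into disjoint compact sets, so it is enough to show $S_L\cap S'_R=\varnothing$ and $S_R\cap S'_L=\varnothing$: then $S_L\subseteq S'_L$, $S_R\subseteq S'_R$, and comparing the two partitions forces $S_L=S'_L$ and $S_R=S'_R$, which are exactly the asserted identities. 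Suppose $x\in S_L\cap S'_R$, say $x\in\varphi_d(F_i)$ with $(d,i)\in\c_{P_0,L}$ and $x\in\varphi_{d'}(F^*_j)$ with $\langle d',j\rangle\in\c'_{P_0,R}$. Since $F^*_j\subseteq F_{i_j}$ we get $x\in\varphi_d(F_i)\cap\varphi_{d'}(F_{i_j})$, so $(d,i)$ and $(d',i_j)$ are equal or adjacent in $G_F[P_0]$, whence $(d',i_j)\in\c_{P_0,L}$; but $(d',i_j)=h(\langle d',j\rangle)\in h(\c'_{P_0,R})\subseteq\c_{P_0,R}$, contradicting $\c_{P_0,L}\neq\c_{P_0,R}$. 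The equality $S_R\cap S'_L=\varnothing$ is symmetric.

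\textbf{Main obstacle.} The one genuinely hard step is the upper bound $\#\C(G_F[P_0])\leqslant 2$ (and its analogue for $G'_F[P_0]$): showing that a hypothetical third component cannot exist, i.e., that the piece $W$ of $F$ it carves out — confined to a strip of height $\#P_0/N<1$ that avoids $\{y=0\}$ or $\{y=1\}$ — must either split into vertical-like components of $F$ (contradicting Corollary~\ref{cor:cooffver}) or generate infinitely many components (contradicting $\#\C(F)<\infty$). The care is entirely in controlling how connected pieces of $F$ can cross the two vertical sides of the strip into the neighbouring columns of the digital set.
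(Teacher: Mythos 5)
Your skeleton is right (Proposition~\ref{prop:mindiscon} for $\#\C(G_F[P_0])\geqslant 2$, Proposition~\ref{prop:intersect} and Corollary~\ref{cor:kintersect} to chain the vertices $\{(d,1)\}_{d\in P_0}$ and $\{(d,m)\}_{d\in P_0}$ into $\c_{P_0,L}$ and $\c_{P_0,R}$, and a partition argument for the two set equalities once both counts are known), but the step you yourself flag as ``the technical core'' is exactly where the paper's proof does its real work, and you leave it unproven. To kill a hypothetical third component $\c$ of $G_F[P_0]$ one needs, for every $d'\in\D\setminus P_0$,
\[
\varphi_{d'}(F)\cap\Bigl(\bigcup_{d\in P_0}\varphi_d(F)\Bigr)=\varphi_{d'}(F)\cap\Bigl(\bigcup_{d\in P_0}\bigl(\varphi_d(F_1)\cup\varphi_d(F_m)\bigr)\Bigr),
\]
which holds because $P_0$ is a maximal vertical run (the cells directly above and below it are not in $\D$), so an outside square can meet a pillar square only along its left or right vertical edge (or a corner of it), while the trace of $\varphi_d(F)$ on that edge lies in $\varphi_d(F_1)$ resp.\ $\varphi_d(F_m)$: since each $\D_i$ is connected and vertical-like it meets every row, so the left-to-right arrangement forces all digits with first coordinate $0$ into $\D_1$ and all with first coordinate $N-1$ into $\D_m$. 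Because $\c$ contains no vertex $(d,1)$ or $(d,m)$, the set $W=\bigcup_{(d,i)\in\c}\varphi_d(F_i)$ is disjoint from $\bigcup_{d\in P_0}(\varphi_d(F_1)\cup\varphi_d(F_m))$, hence from every $\varphi_{d'}(F)$ with $d'\notin P_0$; only with this in hand is $W$ a union of connected components of $F$, and the contradiction with Corollary~\ref{cor:cooffver} follows.

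Your substitute observations do not close this: relative clopenness of $W$ in $S=\bigcup_{d\in P_0}\varphi_d(F)$ says nothing about intersections with $F\setminus S$, and the parenthetical fallback via the ``splitting cascade'' of Proposition~\ref{prop:verhor} is not an argument — the scenario it must exclude is precisely that pieces of $W$ hook onto neighbouring columns through the vertical sides of the strip, in which case nothing need split at later stages. The same missing ingredient is invoked again for your bound $\#\C(G'_F[P_0])\leqslant 2$ (there with $F^*_1,F^*_M$ in place of $F_1,F_m$), so both counts, and with them the set equalities, rest on an unproved claim. The parts you do carry out — the vertex map $\langle d,j\rangle\mapsto(d,i_j)$ giving $\#\C(G'_F[P_0])\geqslant\#\C(G_F[P_0])$, and the disjointness/partition derivation of the two displayed equalities — are correct and are a mild variant of the paper's ending (the paper instead uses $F^*_1\subset F_1$ and $F^*_M\subset F_m$ to get the inclusions of the primed unions into the unprimed ones and then compares the two partitions), but as written the proposal has a genuine gap at the crux.
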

\begin{proof}
It follows from Proposition~\ref{prop:mindiscon} that $G_F[P_0]$ is disconnected, i.e., $\#\C(G_F[P_0]) \geqslant 2$. If $G_F[P_0]$ contains a connected component other than $\c_{P_0, L}$ and $\c_{P_0, R}$, say $\c$, then
\begin{equation}\label{eq:lem4}
\Big( \bigcup_{(d,i)\in\c} \varphi_d(F_i) \Big) \cap \Big( \bigcup_{(d,i) \in \c_{P_0,L}} \varphi_d(F_i) \Big) = \varnothing, \quad \Big( \bigcup_{(d,i)\in\c} \varphi_d(F_i) \Big) \cap \Big( \bigcup_{(d,i) \in \c_{P_0,R}} \varphi_d(F_i) \Big) = \varnothing.
\end{equation}
Note that for every $d' \notin P_0$,
\[ \varphi_{d'}(F) \cap \Big( \bigcup_{d \in P_0}\varphi_d(F) \Big) = \varphi_{d'}(F) \cap \Big( \bigcup_{d \in P_0} \varphi_d(F_1) \cup \varphi_d(F_m) \Big). \]
Since $\bigcup_{(d,i)\in\c} \varphi_d(F_i) \subset \bigcup_{d \in P_0}\varphi_d(F)$, combining with \eqref{eq:lem4} we know that $\varphi_{d'}(F) \cap \bigcup_{(d,i)\in\c} \varphi_d(F_i) = \varnothing$. Thus there are some connected components of $F$ contained in $\bigcup_{(d,i)\in\c} \varphi_d(F_i) \subset \bigcup_{d \in P_0} \varphi_d([0,1]^2)$.
Since $\#P_0 < N$, these components cannot be vertical-like and therefore $\#\C(F)=\infty$ (by Corollary~\ref{cor:cooffver}), which leads to a contradiction. Thus $G_F[P_0]$ contains exactly two connected components $\c_{P_0, L}$ and $\c_{P_0, R}$.

Note that $\#\C(G'_F[P_0]) \geqslant \#\C(G_F[P_0]) = 2$ (for the same reason as $\#\C(G'_F) \geqslant \#\C(G_F)$). Similarly as above, we can show that $\#\C(G'_F[P_0])=2$, and 
\begin{align*}
\Big( \bigcup_{(d,i) \in \c_{P_0,L}} \varphi_d(F_i) \Big) \cup \Big( \bigcup_{(d,i) \in \c_{P_0,R}} \varphi_d(F_i) \Big) &= \bigcup_{d \in P_0}\varphi_d(F) \\
&= \Big( \bigcup_{\langle d,j \rangle \in \c'_{P_0,L}} \varphi_{d}(F^*_j) \Big) \cup \Big( \bigcup_{\langle d,j \rangle \in \c'_{P_0,R}} \varphi_{d}(F^*_j) \Big),
\end{align*} 
Moreover, note that $F^*_1 \subset F_1, F^*_M \subset F_m$ (since they are both on the leftmost or rightmost), we have
\begin{equation*} 
\bigcup_{\langle d,j \rangle \in \c'_{P_0,L}} \varphi_{d}(F^*_j) \subset \bigcup_{(d,i) \in \c_{P_0,L}} \varphi_d(F_i), \quad \bigcup_{\langle d,j \rangle \in \c'_{P_0,R}} \varphi_{d}(F^*_j) \subset \bigcup_{(d,i) \in \c_{P_0,R}} \varphi_d(F_i),
\end{equation*}
and obtain the desired equality.
\end{proof}


\begin{coro}\label{cor:last}
Under the assumptions of the above lemma, if $\C(G_F[P_0])=\{\c_{P_0, L}, \c_{P_0, R}\}$, then for any other pillar $P$, $G_F[P]$ has at most two connected components $\c_{P, L}$ and $\c_{P, R}$, and
\begin{equation}\label{eq:3-9}
 \bigcup_{(d,i) \in \c_{P,L}} \varphi_d(F_i) = \bigcup_{\langle d,j \rangle \in \c'_{P,L}} \varphi_{d}(F^*_j), \quad \bigcup_{(d,i) \in \c_{P,R}} \varphi_d(F_i) = \bigcup_{\langle d,j \rangle \in \c'_{P,R}} \varphi_{d}(F^*_j). 
\end{equation}
\end{coro}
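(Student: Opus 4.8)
The plan is to bootstrap everything from the minimal pillar $P_0$: decompose an arbitrary pillar $P$ into overlapping integer translates of $P_0$, transport the conclusion of Lemma~\ref{lem:4} to each translate, and glue. Since $P_0$ is shortest, $\#P\ge\#P_0=:p$. The degenerate case $p=1$ forces $m=2$ (as $G_F[P_0]=\{(d,1),\dots,(d,m)\}$ has no edges, so its having exactly two components means $m=2$), and then Proposition~\ref{prop:intersect} --- via $F_1\cap(F_1+(0,1))\ne\varnothing$ and $F_m\cap(F_m+(0,1))\ne\varnothing$, rescaled and translated --- produces edges between consecutive vertices $(e,1),(e',1)$ along $P$ and likewise for $(e,m),(e',m)$, so $G_F[P]=\c_{P,L}\cup\c_{P,R}$ and \eqref{eq:3-9} is immediate. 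So assume $p\ge 2$; writing $P=\{(a,b),\dots,(a,b+q-1)\}$, set $P_k=\{(a,b+k),\dots,(a,b+k+p-1)\}$ for $0\le k\le q-p$, so consecutive $P_k$ overlap in $p-1\ge 1$ elements.

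Next I transport Lemma~\ref{lem:4}. Translation by the appropriate integer vector is a graph isomorphism $G_F[P_k]\cong G_F[P_0]$ and $G'_F[P_k]\cong G'_F[P_0]$ carrying ``left'' vertices to ``left'' vertices. Hence each $G_F[P_k]$ and each $G'_F[P_k]$ has exactly two components $\c_{P_k,L},\c_{P_k,R}$ (resp.\ $\c'_{P_k,L},\c'_{P_k,R}$); these two components of $G_F[P_k]$ lie in distinct components of $G_F$ (resp.\ $G'_F$), since by Lemma~\ref{lem:4} the same is true at level $P_0$; and applying the translation to the displayed equalities of Lemma~\ref{lem:4} gives, for every $k$,
\[ \bigcup_{(d,i)\in\c_{P_k,L}}\varphi_d(F_i)=\bigcup_{\langle d,j\rangle\in\c'_{P_k,L}}\varphi_d(F^*_j),\qquad \bigcup_{(d,i)\in\c_{P_k,R}}\varphi_d(F_i)=\bigcup_{\langle d,j\rangle\in\c'_{P_k,R}}\varphi_d(F^*_j). \]
Now glue: choosing $d^*\in P_k\cap P_{k+1}$ we have $(d^*,1)\in\c_{P_k,L}\cap\c_{P_{k+1},L}$ and $(d^*,m)\in\c_{P_k,R}\cap\c_{P_{k+1},R}$, so inductively $\bigcup_k\c_{P_k,L}$ lies in one component of $G_F$, necessarily $\c_{P,L}$, and $\bigcup_k\c_{P_k,R}\subseteq\c_{P,R}$. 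Since the vertex set of $G_F[P]$ is the union of those of the $G_F[P_k]$, we conclude $G_F[P]=\c_{P,L}\cup\c_{P,R}$ has at most two components, and likewise $G'_F[P]=\c'_{P,L}\cup\c'_{P,R}$.

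It remains to prove \eqref{eq:3-9}. Consider $\pi\colon\langle d,j\rangle\mapsto(d,i_j)$, where $F^*_j\subseteq F_{i_j}$ (Corollary~\ref{cor:ejfij}); it sends $G'_F$-edges to $G_F$-edges, hence $G'_F$-components into $G_F$-components, and $\pi(\langle d,1\rangle)=(d,1)$, $\pi(\langle d,M\rangle)=(d,m)$ because $F^*_1\subseteq F_1$ and $F^*_M\subseteq F_m$; consequently $\c'_{P,L}=\c'_{P,R}$ implies $\c_{P,L}=\c_{P,R}$. If $\c_{P,L}\ne\c_{P,R}$, the contrapositive yields $\c'_{P,L}\ne\c'_{P,R}$, and then --- using that $\c_{P,L},\c_{P,R}$ are disjoint and exhaust $G_F[P]$ while $\c_{P_k,L}\subseteq\c_{P,L}$, $\c_{P_k,R}\subseteq\c_{P,R}$ --- we get $\c_{P,L}=\bigcup_k\c_{P_k,L}$, $\c_{P,R}=\bigcup_k\c_{P_k,R}$ and the same identities in $G'_F$, so summing the displayed equalities over $k$ proves \eqref{eq:3-9}. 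If instead $\c_{P,L}=\c_{P,R}$, then $G_F[P]$ is connected and each side of each equation in \eqref{eq:3-9} equals $\bigcup_{d\in P}\varphi_d(F)$ as soon as we also know $\c'_{P,L}=\c'_{P,R}$.

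The main obstacle is precisely this last clause: ruling out that $G_F[P]$ is connected while $G'_F[P]$ is not. I expect to settle it by the argument that drives the proof of Lemma~\ref{lem:4}. If such a discrepancy occurred, then feeding it through the level-$k$ set identities above (the left and right pieces $A_k,B_k$ attached to $F$ coincide with the corresponding pieces $A'_k,B'_k$ attached to $F^*$) would force the left and right parts inside the strip $\bigcup_{d\in P}\varphi_d([0,1]^2)$ to merge in the $F$-picture but remain separated in the $F^*$-picture; tracking this configuration, one produces inside that strip either a connected component of $F$ that fails to be vertical-like or an infinite family of components, contradicting Corollary~\ref{cor:cooffver} together with $\#\C(F)<\infty$. (Alternatively, a direct proof that $\c_{P,L}\ne\c_{P,R}$ for every pillar under these hypotheses would eliminate this case outright.) Everything else is routine bookkeeping with translations and the gluing step.
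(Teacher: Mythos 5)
Your overall route is the same as the paper's: decompose $P$ into translates $P_k$ of $P_0$ overlapping in $\#P_0-1$ cells, transport Lemma~\ref{lem:4} to each $P_k$, and glue; your treatment of the separated case $\c_{P,L}\neq\c_{P,R}$ (via the projection $\langle d,j\rangle\mapsto (d,i_j)$) is correct. The genuine gap is exactly the point you flag as ``the main obstacle'': the case $\c_{P,L}=\c_{P,R}$ with possibly $\c'_{P,L}\neq\c'_{P,R}$ is not settled — you only sketch a hoped-for geometric contradiction (a non-vertical-like component or infinitely many components inside the strip) without carrying it out, and as written that is a plan, not a proof. The case matters: if $G_F[P]$ were connected while $G'_F[P]$ were not, the left-hand sides of \eqref{eq:3-9} would equal $\bigcup_{d\in P}\varphi_d(F)$ while the right-hand sides would be the two disjoint proper pieces, so \eqref{eq:3-9} would fail; indeed, part of the content of the corollary is precisely that this discrepancy cannot occur, so it cannot be waved away.

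The good news is that no new geometry is needed: the per-translate identities you already have close the gap. Set $L_k=\bigcup_{(d,i)\in\c_{P_k,L}}\varphi_d(F_i)$ and $R_k=\bigcup_{(d,i)\in\c_{P_k,R}}\varphi_d(F_i)$, and recall from the translated Lemma~\ref{lem:4} that $L_k=\bigcup_{\langle d,j\rangle\in\c'_{P_k,L}}\varphi_d(F^*_j)$ and $R_k=\bigcup_{\langle d,j\rangle\in\c'_{P_k,R}}\varphi_d(F^*_j)$. Since every vertex of $G_F[P]$ lies in some $\c_{P_k,L}$ or $\c_{P_k,R}$, one checks that $\c_{P,L}=\c_{P,R}$ if and only if $\bigl(\bigcup_k L_k\bigr)\cap\bigl(\bigcup_k R_k\bigr)\neq\varnothing$: a path in $G_F[P]$ from a left vertex to a right vertex must contain either a vertex lying in both families or an edge between them, and either produces a point of the intersection; conversely an intersection point produces a common vertex or an edge joining $\c_{P,L}$ to $\c_{P,R}$. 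The identical criterion, with the very same sets $L_k,R_k$, characterizes $\c'_{P,L}=\c'_{P,R}$; hence the merged/separated status of $G_F[P]$ and $G'_F[P]$ always coincide, and in the merged case both sides of \eqref{eq:3-9} equal $\bigcup_{d\in P}\varphi_d(F)$. (Two smaller remarks: your separate treatment of $\#P_0=1$ is a nice addition the paper skips, but there \eqref{eq:3-9} is not ``immediate'' either — you need Corollary~\ref{cor:kintersect} to see that $G'_F[P]$ has at most the two components $\c'_{P,L},\c'_{P,R}$, and then the same criterion as above; and your assertion that the two components of each $G_F[P_k]$ lie in distinct components of the full graph $G_F$ is neither justified nor needed — all gluing takes place inside the induced vertex set $G_F[P]$.)
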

The proof is very similar to the one of Lemma~\ref{lem:2}.
\begin{proof}
If $\#P=\#P_0$, then $\bigcup_{d \in P}\varphi_d(F)$ is just a translation of $\bigcup_{d \in P_0}\varphi_d(F)$ and the corollary follows immediately from Lemma~\ref{lem:4}. For the case when $\#P > \#P_0$, we may again assume that
\begin{equation*}
P_0 = \{(a, b), (a, b+1), \ldots, (a, b+p)\}, \quad P = \{(c, d), (c, d+1), \ldots, (c, d+q)\},
\end{equation*}
where $0 \leqslant a,b,c,d \leqslant N-1$ and $1 \leqslant p< q$. Denote
\begin{equation*} 
P_k := \{(c, d+k), \ldots, (c, d+k+p)\}, \quad k=0,1,\ldots,q-p. 
\end{equation*}
Clearly $P = \bigcup_{k=0}^{q-p} P_k$. Since $\#P_k=\#P_0$, $G_F[P_k]$ contains exactly two connected components $\c_{P_k, L}$ and $\c_{P_k, R}$ (Here we abuse the notation since $P_k$ is no longer a pillar, but these expressions retain their clarity). Meanwhile, $\c_{P_k, L} \subset \c_{P, L}$ and $\c_{P_k, R} \subset \c_{P, R}$. Note that by $p>1$ we have $\c_{P_k,L} \cap \c_{P_{k+1},L} \neq \varnothing$ and $\c_{P_k,R} \cap \c_{P_{k+1},R} \neq \varnothing$, thus $G_F[P]$ has at most two connected components $\c_{P, L}$ and $\c_{P, R}$. Similarly, $G'_F[P]$ has at most two connected components $\c'_{P, L}$ and $\c'_{P, R}$. Furthermore, by Lemma~\ref{lem:4} we also know 
\[ \bigcup_{(d,i)\in\c_{P_k,L}} \varphi_d(F_i) = \bigcup_{\langle d,j \rangle \in \c'_{P_k,L}} \varphi_{d}(F^*_j), \quad  \bigcup_{(d,i)\in\c_{P_k,R}} \varphi_d(F_i) = \bigcup_{\langle d,j \rangle \in \c'_{P_k,R}} \varphi_{d}(F^*_j), \] 
holds for each $k$, and hence \eqref{eq:3-9} holds.
\end{proof}

The following result is the ``only if\," part of Theorem~\ref{thm:main2}.
\begin{prop}\label{prop:last}
If $F$ is a vertical-like fractal square with $\#\C(F)<\infty$, then $\#\C(G'_F) = \#\C(G_F)$.
\end{prop}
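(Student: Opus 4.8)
The plan is to argue by contradiction: assume $F$ is vertical-like, $\#\C(F)<\infty$, but $\#\C(G'_F) > \#\C(G_F)$. Since $\#\C(G_F) \geqslant \#\C(\D) = m$, we may first dispose of the easy case $\#\C(G_F) = m$ separately — but actually it is cleaner to split on whether $\#\C(G_F) = m$ or $\#\C(G_F) > m$. If $\#\C(G_F) = m$, then Theorem~\ref{thm:main1} already gives $\#\C(F) = m$, and then Lemma~\ref{lem:3-8} forces $m \leqslant \#\C(G'_F) \leqslant \#\C(F) = m$, so $\#\C(G'_F) = \#\C(G_F)$ and we are done. So the substantive case is $\#\C(G_F) > \#\C(\D) > 1$, which is precisely the setting of Lemma~\ref{lem:4} and Corollary~\ref{cor:last}.

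In that case, first I would invoke Remark~\ref{rem:2}: since $\#\C(F)<\infty$, passing to $\D^* = N\D+\D$ and $F^*$ we may assume $\D^*_1,\ldots,\D^*_M$ are all vertical-like and arranged from left to right, so $\{\c_j\}_{j=1}^M$ is arranged from left to right as well. Next I would check the dichotomy from Proposition~\ref{prop:mindiscon}: let $P_0$ be a pillar with the least number of elements. If $\#P_0 = N$, then every pillar has exactly $N$ elements, meaning each $\D_i$ is a union of full columns; in this situation one can see directly (each $\varphi_d(F_i)$ sits in a full column of height $1/N$, and vertical neighbors in the same $\D_i$ glue up) that $G_F[P]$ and $G'_F[P]$ have the same structure for every pillar, and in fact $\#\C(G_F) = \#\C(G'_F)$ — this degenerate sub-case should be handled with a short separate remark. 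The main line is when $\#P_0 < N$: then Lemma~\ref{lem:4} applies and gives $\#\C(G_F[P_0]) = \#\C(G'_F[P_0]) = 2$ together with the key identities $\bigcup_{(d,i)\in\c_{P_0,L}}\varphi_d(F_i) = \bigcup_{\langle d,j\rangle\in\c'_{P_0,L}}\varphi_d(F^*_j)$ and the same on the right. Corollary~\ref{cor:last} then propagates this to every pillar $P$: $G_F[P]$ has at most the two components $\c_{P,L},\c_{P,R}$, $G'_F[P]$ has at most $\c'_{P,L},\c'_{P,R}$, and \eqref{eq:3-9} holds.

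The heart of the proof is then to build, from \eqref{eq:3-9} holding for all pillars, a bijection between $\C(G_F)$ and $\C(G'_F)$, or more directly to show every connected component of $G'_F$ maps into a single connected component of $G_F$ under the natural projection $\langle d,j\rangle \mapsto (d, i_j)$ where $F^*_j \subset F_{i_j}$ (Corollary~\ref{cor:ejfij}); combined with Lemma~\ref{lem:3-8} this forces equality of the counts. To see this: take an edge in $G'_F$ joining $\langle d_1,j_1\rangle$ and $\langle d_2,j_2\rangle$, so $\varphi_{d_1}(F^*_{j_1}) \cap \varphi_{d_2}(F^*_{j_2}) \neq \varnothing$. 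Since $F^* \cap (F^*+(0,1)) \neq \varnothing$ (from Proposition~\ref{prop:intersect} applied to $\D^*$, or Corollary~\ref{cor:kintersect}), each pillar lies in a single $\D_i$, and the intersection point lies in some $\varphi_d([0,1]^2)$ for $d$ in a pillar $P$ (two vertically adjacent cells, or the same cell). Using \eqref{eq:3-9} for that pillar $P$, the union $\bigcup_{\langle d,j\rangle\in\c'_{P,L}}\varphi_d(F^*_j)$ equals $\bigcup_{(d,i)\in\c_{P,L}}\varphi_d(F_i)$, which is a connected piece sitting inside one connected component of $F$ and hence (by Lemma~\ref{lem:2-2}-type reasoning) inside one connected component of $G_F$; likewise on the right. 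Tracking which side of the pillar the intersection point lies on, one deduces that the $G_F$-vertices $(d_1,i_{j_1})$ and $(d_2,i_{j_2})$ lie in a common connected component of $G_F$. Hence the projection sends $G'_F$-edges into $G_F$-components consistently, so each $G'_F$-component is carried into one $G_F$-component; since the projection is surjective on components (every $(d,i)$ with $F_i$ meeting a cell is hit), we get $\#\C(G'_F) \leqslant \#\C(G_F)$, contradicting the strict inequality. Therefore $\#\C(G'_F) = \#\C(G_F)$.

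The main obstacle I anticipate is the bookkeeping in the last paragraph: the identities in \eqref{eq:3-9} are stated pillar-by-pillar, and one must carefully argue that an intersection $\varphi_{d_1}(F^*_{j_1}) \cap \varphi_{d_2}(F^*_{j_2}) \neq \varnothing$ always ``sees'' a single pillar on which the left/right identity can be invoked — this uses that $F^*$ touches its own vertical translate and that intersections between cells of $Q_1$ occur only along shared horizontal edges (or within one cell) once one is in the vertical-like, finitely-many-components regime, exactly as exploited in the proof of Proposition~\ref{prop:intersect}. Once that localization is set up, matching the ``L'' piece of one graph to the ``L'' piece of the other is forced by the left-to-right arrangement, and the counting inequality drops out. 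I would also make sure the $\#P_0 = N$ degenerate case is genuinely trivial before relegating it to a one-line remark.
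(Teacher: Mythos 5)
Your setup follows the paper's route up to the decisive step: the reduction to the case $\#\C(G_F)>\#\C(\D)$ via Theorem~\ref{thm:main1} and Lemma~\ref{lem:3-8}, the appeal to Remark~\ref{rem:2}, the minimal pillar $P_0$, and Lemma~\ref{lem:4} together with Corollary~\ref{cor:last} and the identities \eqref{eq:3-9} are exactly the right ingredients. The gap is in how you assemble them at the end: your final argument establishes only that the projection $\pi:\langle d,j\rangle\mapsto (d,i_{j})$ sends every edge of $G'_F$ to a pair of vertices lying in one component of $G_F$. But this is automatic from $F^*_j\subset F_{i_j}$ (if $\varphi_{d_1}(F^*_{j_1})\cap\varphi_{d_2}(F^*_{j_2})\neq\varnothing$ then $\varphi_{d_1}(F_{i_{j_1}})\cap\varphi_{d_2}(F_{i_{j_2}})\neq\varnothing$, so the images are already \emph{adjacent} in $G_F$); it is precisely the content of the proof of Lemma~\ref{lem:3-8}, needs no pillar analysis, and yields a well-defined surjection $\C(G'_F)\to\C(G_F)$, hence $\#\C(G'_F)\geqslant\#\C(G_F)$ --- the inequality you already have. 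Deducing ``$\#\C(G'_F)\leqslant\#\C(G_F)$'' from surjectivity of this map is a non sequitur: a surjection bounds the domain from \emph{below}. What is actually needed is injectivity of the induced map on components, i.e., that for each $\c_{j_0}\in\C(G_F)$ the fibre $\{\langle d,j\rangle:\varphi_d(F^*_j)\subset F^*_{j_0}\}$ is connected in $G'_F$. That requires lifting $G_F$-paths up to $G'_F$, not pushing $G'_F$-edges down: given $(d',i'),(d'',i'')\in\c_{j_0}$ joined by a path $\{(d_k,i_k)\}_{k=1}^n$, one places each $(d_k,i_k)$ in $\c_{P_k,L}$ or $\c_{P_k,R}$ (Corollary~\ref{cor:last}), notes that $\varphi_{d_k}(F_{i_k})\cap\varphi_{d_{k+1}}(F_{i_{k+1}})\neq\varnothing$ forces $\bigl(\bigcup_{(d,i)\in\c_{P_k,t_k}}\varphi_d(F_i)\bigr)\cap\bigl(\bigcup_{(d,i)\in\c_{P_{k+1},t_{k+1}}}\varphi_d(F_i)\bigr)\neq\varnothing$, and only then uses \eqref{eq:3-9} to convert each such intersection into an intersection of the corresponding $G'_F$-unions, chaining $\c'_{P_1,t_1},\ldots,\c'_{P_n,t_n}$ into a single component of $G'_F$ containing both $\langle d',j'\rangle$ and $\langle d'',j''\rangle$. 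In short, \eqref{eq:3-9} must be read as transporting connectivity upward into $G'_F$; your use of it transports information downward into $G_F$, where nothing new is obtained.

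A smaller point: the sub-case $\#P_0=N$ does not need the separate structural argument you defer to a remark (and your claim that $G_F[P]$ and $G'_F[P]$ ``have the same structure'' there is unsubstantiated). If every pillar has $N$ elements then $F=C\times[0,1]$ with $C=(C+A)/N$, $A$ the set of occupied columns; since $\#\C(\D)\geqslant 2$ forces $A$ to be a proper, non-interval subset of $\{0,\ldots,N-1\}$, the set $C$ has Lebesgue measure zero, hence is totally disconnected and infinite, so $\#\C(F)=\infty$. The case is therefore vacuous under the hypothesis $\#\C(F)<\infty$, which is how the paper dismisses it.
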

\begin{proof}
In the case when $\#\C(G_F)=\#\C(\D)$, it follows from Theorem~\ref{thm:main1} and Lemma~\ref{lem:3-8} that
\begin{equation*}
\#\C(\D)=\#\C(G_F)\leqslant \#\C(G'_F) \leqslant \#\C(F) = \#\C(\D),
\end{equation*}
so that $\#\C(G'_F)= \#\C(G_F)$. Thus we may assume that $\#\C(G_F)>\#\C(\D)$.

As before we set $\C(\D)=\{\D_1,\ldots,\D_m\}$, $\C(G_F)=\{\c_1,\ldots,\c_M\}$, where $M>m\geqslant 2$, and $\{\D_i\}_{i=1}^m$ and $\{\c_j\}_{j=1}^M$ are both arranged from left to right. Let $P_0 \subset \D$ be a pillar with the least number of  elements. If $\#P_0=N$, $F$ is the product of a Cantor set with $[0,1]$, and hence the union of infinitely many parallel line segments. Thus we may assume $\#P_0<N$.

In order to show $\#\C(G'_F)=M$, it suffices to show that $\{\langle d,j \rangle: \varphi_d(F^*_j) \subset F^*_{j_0}\}$ is a connected component in $G'_F$ for every fixed $1\leqslant j_0\leqslant M$. Recall from Lemma~\ref{lem:3} that $F^*_{j_0} = \bigcup_{(d,i)\in\c_{j_0}}\varphi_d(F_i)$. Note that for any such $\langle d', j' \rangle, \langle d'', j'' \rangle$, by Corollary~\ref{cor:ejfij} and the above equality, there exist $i', i''$ with $(d',i'), (d'', i'') \in \c_{j_0}$, such that $F^*_{j'}\subset F_{i'}$ and $F^*_{j''}\subset F_{i''}$. In particular, we have $\varphi_{d'}(F^*_{j'}) \subset \varphi_{d'}(F_{i'})$ and $\varphi_{d''}(F^*_{j''}) \subset \varphi_{d''}(F_{i''})$. Moreover, since $(d',i'), (d'', i'') \in \c_{j_0}$, we can then find a sequence of $\{(d_k, i_k)\}_{k=1}^n \subset \c_{j_0}$ such that $(d_1, i_1) = (d', i')$, $(d_n, i_n) = (d'', i'')$, and
\begin{equation}\label{eq:sec3-9}
\varphi_{d_k}(F_{i_k}) \cap \varphi_{d_{k+1}}(F_{i_{k+1}}) \neq \varnothing, \quad k=1,2,\ldots,n-1.
\end{equation}
Let $P_k$ denote the pillar to which $d_k$ belongs. By Corollary~\ref{cor:last}, $G_F[P_k]$ contains at most two connected components $\c_{P_k,L}$ and $\c_{P_k,R}$. Thus there exists a sequence of letters $\{t_k\} \subset \{L,R\}$ such that $(d_k,i_k) \in \c_{P_k,t_k}$. By \eqref{eq:sec3-9} this implies
\begin{equation*} 
\Big( \bigcup_{(d,i)\in\c_{P_k,t_k}}\varphi_d(F_i) \Big) \cap \Big( \bigcup_{(d,i)\in\c_{P_{k+1},t_{k+1}}}\varphi_d(F_i) \Big) \neq \varnothing, \quad k=1,2,\ldots,n-1. 
\end{equation*}
Then it follows immediately from \eqref{eq:3-9} that
\begin{equation}\label{eq:last}
\Big( \bigcup_{\langle d,j \rangle \in\c'_{P_k,t_k}}\varphi_d(F^*_j) \Big) \cap \Big( \bigcup_{\langle d,j \rangle \in\c'_{P_{k+1},t_{k+1}}}\varphi_d(F^*_j) \Big) \neq \varnothing, \quad k=1,2,\ldots,n-1. 
\end{equation}
This means $\c'_{P_k, t_k}$ and $\c'_{P_{k+1}, t_{k+1}}$ must lie in the same connected component of $G'_F$. In particular, $\c'_{P_1, t_1}$ and $\c'_{P_n, t_n}$ lie in the same connected component of $G'_F$. Note that we have
\begin{equation*}
\begin{gathered}
\varphi_{d'}(F^*_{j'}) \subset \varphi_{d'}(F_{i'}) \subset \bigcup_{(d,i)\in\c_{P_1,t_1}}\varphi_d(F_i) = \bigcup_{\langle d,j \rangle \in \c'_{P_1,t_1}}\varphi_d(F^*_j), \\
\varphi_{d''}(F^*_{j''}) \subset \varphi_{d''}(F_{i''}) \subset \bigcup_{(d,i)\in\c_{P_n,t_n}}\varphi_d(F_i) = \bigcup_{\langle d,j \rangle \in \c'_{P_n,t_n}}\varphi_d(F^*_j),
\end{gathered}
\end{equation*}
which (by \eqref{eq:last}) implies that $\langle d', j' \rangle$ and $\langle d'', j'' \rangle$ belong to the same connected component of $G'_F$.
\end{proof}

\section{Further remarks}

\begin{remark}
	Theorem~\ref{thm:main1} can be easily extended to higher dimensional cases. In fact, for any fixed integer $N \geqslant 2$ and any non-empty $\D \subset \{0,1,\ldots,N-1\}^n$ where $n \geqslant 2$ is the dimension of the Euclidean space $\R^n$, one can similarly define $F=F(N,\D)$ to be the unique non-empty compact set such that $F=(F+\D)/N$, and call it an \emph{$n$-dimensional fractal cube}. We can analogously apply the previous procedure to this case and reach the same conclusion, i.e., if $\#\C(\D)=m\geqslant 2$, then $\#\C(F)=m$ if and only if $\#\C(G_F)=m$.
\end{remark}

Unfortunately, we do not know whether Theorem~\ref{thm:main2} holds or not in higher dimensional cases. One might also expect to determine that by an induction process, i.e., an $n$-dimensional fractal cube has only finitely many connected components if and only if the projection of it to each face of the unit cube $[0,1]^n$ has only finitely connected components. As we shall see in the following example, however, this turns out not to be the case.


%

\begin{exa}
	Let $\D=\{(0,0,0), (0,1,1),(1,0,1),(1,1,0)\}$ and $F = (F+\D)/2$, then $\pi_{xy}(F) = \pi_{yz}(F) = \pi_{xz}(F) = [0,1]^2$, where $\pi_{xy}, \pi_{yz}, \pi_{xz}$ are projection mappings defined by
\begin{equation*} 
\pi_{xy}: (x,y,z) \mapsto (x,y), \quad \pi_{yz}: (x,y,z) \mapsto (y,z), \quad \pi_{xz}: (x,y,z) \mapsto (x,z). \end{equation*}	
However, $F$ contains infinitely many connected components. In fact, $\#\C(Q_n)=4^{n-1}$.
\end{exa}

On the other hand, given any fractal square $F=F(N,\D)$, we can also consider the cardinality of $\#\C(F)$. In Section 2 we have constructed fractal squares with exactly $m$ connected components for any integer $m \geqslant 2$. It turns out that if $\#\C(F)=\infty$ then the infinity should be uncountable. 

Since every element in $\C(F)$ is a compact subset of $F$, we can endow $\C(F)$ with the Hausdorff metric $h$, i.e.,
\[ h(\c_1,\c_2) = \inf\{\delta>0: \c_1 \subset N(\c_2,\delta) \text{ and } \c_2 \subset N(\c_1,\delta)\}, \quad \c_1,\c_2 \in \C(F), \]
where $N(\cdot,\delta)$ represents the $\delta$-neighborhood. For more details about the Hausdorff metric, please see Falconer~\cite{Fal14}. The following result is well-known (e.g., see Falconer~\cite[Exercise 14.1]{Fal14}).

\begin{lemma}
Every perfect set in a metric space is uncountable.	
\end{lemma}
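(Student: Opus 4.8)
The plan is the classical Cantor-scheme argument: a non-empty perfect subset $P$ of a \emph{complete} metric space $X$ admits an injection from $\{0,1\}^{\mathbb N}$, hence has cardinality at least $2^{\aleph_0}$ (indeed it contains a homeomorphic copy of the Cantor set). Completeness is what is actually used — without it the assertion fails, e.g.\ $\mathbb{Q}$ is perfect in itself — so I read the statement as referring to complete, for instance compact, metric spaces, which is all that is needed in the application. Recall that ``perfect'' means closed with no isolated points; thus for every $x\in P$ and every $\varepsilon>0$ the ball $B(x,\varepsilon)$ contains a point of $P$ distinct from $x$.

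First I would construct, by induction on the length of a binary word $s$, non-empty open balls $U_s$ ($s\in\{0,1\}^{<\omega}$) such that $U_s\cap P\neq\varnothing$; $\overline{U_{s0}}$ and $\overline{U_{s1}}$ are disjoint and contained in $U_s$; and $\operatorname{diam}U_s\leq 2^{-|s|}$. Start with any ball $U_\varnothing$ of radius $<1/2$ centred at a point of $P$. For the inductive step, pick $p\in U_s\cap P$; since $p$ is not isolated in $P$, there is $p'\in P\cap U_s$ with $p'\neq p$. Put $2\delta=d(p,p')>0$ and choose $0<r<\delta$ small enough that $\overline{B(p,r)}$ and $\overline{B(p',r)}$ are disjoint (this needs $2r<2\delta$), lie inside the open set $U_s$, and each has diameter $\leq 2^{-|s|-1}$; then set $U_{s0}=B(p,r)$ and $U_{s1}=B(p',r)$. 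Both meet $P$ (they contain $p$, resp.\ $p'$), so the construction continues.

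Finally, fix $\sigma\in\{0,1\}^{\mathbb N}$. The closed sets $\overline{U_{\sigma|_1}}\supseteq\overline{U_{\sigma|_2}}\supseteq\cdots$ are nested, non-empty, and have diameters tending to $0$, so by completeness their intersection is a single point $x_\sigma$; choosing any $q_n\in U_{\sigma|_n}\cap P$ gives $q_n\to x_\sigma$, and $x_\sigma\in P$ because $P$ is closed. If $\sigma\neq\tau$, let $s$ be their longest common prefix; then one of $x_\sigma,x_\tau$ lies in $\overline{U_{s0}}$ and the other in $\overline{U_{s1}}$, and these are disjoint, so $x_\sigma\neq x_\tau$. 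Hence $\sigma\mapsto x_\sigma$ is an injection of $\{0,1\}^{\mathbb N}$ into $P$ and $\#P\geq 2^{\aleph_0}$, so $P$ is uncountable. I do not expect any real obstacle here; the only point requiring care is the tacit use of completeness (equivalently, the fact that the nested closed balls have diameters shrinking to $0$), without which the statement is not true.
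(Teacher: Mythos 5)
Your proof is correct and is the standard Cantor-scheme argument; the paper itself offers no proof, simply citing the statement as well known (Falconer, Exercise 14.1), so there is nothing to compare step by step. Two remarks. First, your caveat about completeness is well taken: as literally stated the lemma is false for arbitrary metric spaces ($\mathbb{Q}$ is a perfect subset of itself and is countable), and the honest reading is that the ambient space should be complete. This is harmless in the paper's application, since $\C(F)$ is being viewed inside the hyperspace of non-empty compact subsets of the compact set $F$ equipped with the Hausdorff metric, which is itself compact and hence complete; a closed subset of it with no isolated points is then perfect in a complete space and your argument applies. Second, your construction is sound in its details: the choice $0<r<\delta=\tfrac12 d(p,p')$ forces $\overline{B(p,r)}\cap\overline{B(p',r)}=\varnothing$, each $U_s$ meets $P$ so the induction never stalls, the diameters $2^{-|s|}$ guarantee a unique point $x_\sigma$ in each nested intersection by completeness, and $x_\sigma\in P$ because $P$ is closed and is approximated by the points $q_n\in U_{\sigma|_n}\cap P$. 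The map $\sigma\mapsto x_\sigma$ is injective by the disjointness of $\overline{U_{s0}}$ and $\overline{U_{s1}}$, giving $\#P\geqslant 2^{\aleph_0}$, which is stronger than mere uncountability. No gaps.
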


We say $\c \in\C(F)$ is \emph{of corner type}, if $\c$ is contained in one of the shaded square in Figure~\ref{fourcases}.

\begin{prop}\label{prop:cortype}
Suppose $F$ is not a singleton. If there is a $\c_* \in \C(F)$ which is of corner type, then $\C(F)$ is uncountable. 
\end{prop}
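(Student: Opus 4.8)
The plan is to build a perfect subset of $\C(F)$ out of translated/rescaled copies of $\c_*$, and then invoke the lemma that perfect sets are uncountable. The starting observation is that $\c_*$ is of corner type, say (after a reflection) it lies in the top-left shaded square $[0,\frac1N]\times[\frac{N-1}{N},1]$ of Figure~\ref{fourcases}. Let $d_* \in \D$ be the digit with $\varphi_{d_*}([0,1]^2)$ containing this corner square; since $F = (F+\D)/N$, there is a connected component $\widetilde{\c}$ of $F$ with $\varphi_{d_*}(\widetilde{\c}) \subset \c_*$, and in fact, because $\c_*$ cannot reach the rest of $F$ (that is exactly what ``corner type'' forces: $\varphi_{d_*}(F)$ meets no other $\varphi_d(F)$ along the relevant edges), we get $\varphi_{d_*}(\c_*) \subset \c_*$ as well — more precisely $\varphi_{d_*}^{\,k}(\c_*)$ is a genuine connected component of $F$ for every $k \geqslant 1$. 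This already produces countably many distinct components converging (in Hausdorff metric) to the fixed point of $\varphi_{d_*}$, but to get uncountably many I need a branching construction.

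\textbf{Key steps.} First, since $F$ is not a singleton, $\C(F)$ has at least two elements, hence some $d' \in \D$ with $\varphi_{d'}(F)$ meeting a component $\c'$ distinct from the ``corner orbit''; by the corner-type hypothesis one can find two distinct digits $d_0, d_1 \in \D$ (e.g. $d_0 = d_*$ and $d_1$ any other digit lying in the column or row that does \emph{not} destroy the corner isolation) such that for every finite word $w = w_1 w_2 \cdots w_k \in \{0,1\}^k$, the set $\varphi_{w_1}\circ \cdots \circ \varphi_{w_k}(\c_*)$ is a connected component of $F$, and distinct words give distinct components. Actually it is cleaner to note that $\varphi_{d_*}(\c_*)\subsetneq\c_*$ but $\c_* \neq \varphi_{d_*}(\c_*)$ since $\c_*$ has more than one point (as $F$ is not a singleton and $\c_*$ is not totally disconnected — or if $\c_*$ is a point, then $\#\C(F)=\infty$ forces infinitely many corner-type singletons which I handle separately). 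So I would: (i) show $\varphi_{d_*}$ maps $\C(F)$ into $\C(F)$ injectively with image contained in a small ball, giving an infinite discrete-looking orbit plus its limit point; (ii) show that the limit point $\{x_0\}$ (fixed point of $\varphi_{d_*}$) together with $\{\varphi_{d_*}^k(\c_*)\}_k$ is \emph{not yet} perfect, so I enrich it: use a second branch. Concretely, define $S = \{\, \varphi_{d_*}^{k}(\sigma) : k \geqslant 0,\ \sigma \in \C(F),\ \sigma \text{ of corner type} \,\} \cup (\text{accumulation points})$.

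Here is the cleaner route I would actually write: set $\Phi := \varphi_{d_*}$ and let $T := \overline{\{\Phi^k(\c_*): k \geqslant 0\}}^{\,h}$, the Hausdorff-metric closure inside $\C(F)$. Every $\Phi^k(\c_*)$ is an isolated point of $\C(F)$ only if $\c_*$ is, so instead take $\Phi$ and a second map $\Psi := \varphi_{d_1}$ for a digit $d_1\neq d_*$ chosen (using that $\c_*$ is corner type hence $\varphi_{d_*}(F)$ is isolated, so one is free to pick $d_1$ anywhere) so that $\Psi(\c_*)$ is again a component; then $K := \bigcap_{k\geqslant 1}\bigcup_{w \in \{d_*,d_1\}^k} \{\,\varphi_{w_1}\cdots\varphi_{w_k}(\c_*)\,\}^{\text{closure}}$ — the attractor, in $(\C(F),h)$, of the IFS $\{A \mapsto \Phi(A),\ A \mapsto \Psi(A)\}$ acting on components — is a nonempty compact set on which the two contractions act with disjoint images (disjointness because $\varphi_{d_*}([0,1]^2)$ and $\varphi_{d_1}([0,1]^2)$ are distinct small squares), so $K$ is a Cantor set, in particular perfect and uncountable. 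Each element of $K$ is a decreasing intersection of genuine connected components, hence (by Lemma~\ref{lem:Mun}) a connected compact subset of $F$, and distinct branches give disjoint such sets lying in disjoint squares, so they are distinct connected components of $F$. Therefore $\#\C(F) \geqslant \#K = \infty$ uncountably.

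\textbf{Main obstacle.} The delicate point is verifying that every set of the form $\varphi_{w_1}\circ\cdots\circ\varphi_{w_k}(\c_*)$ is an \emph{entire} connected component of $F$ and not merely a connected subset of a larger one — i.e. that applying $\varphi_d$ for $d \in \{d_*, d_1\}$ never glues the corner copy to anything else. This is exactly where the corner-type hypothesis does its work: because $\c_*$ sits in a corner square and touches no other first-level piece, $\varphi_{d_*}(F)\cap\varphi_{d}(F)$ can only meet $\varphi_{d_*}(\c_*)$ along edges that $\c_*$ does not reach, so $\varphi_{d_*}(\c_*)$ remains isolated; one then iterates this. I would spell this out with a short lemma: if $\c \in \C(F)$ is of corner type and $d \in \D$, then $\varphi_d(\c) \in \C(F)$ and is again of corner type (relative to the sub-square $\varphi_d([0,1]^2)$, which for the intersection analysis behaves like a corner of $[0,1]^2$ after rescaling). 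The rest — that disjoint squares force disjoint components, that a decreasing intersection of components is connected (Lemma~\ref{lem:Mun}) and a component, that an IFS of two contractions with disjoint images has a perfect attractor — is standard and I would only sketch it.
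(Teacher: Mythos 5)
Your construction hinges on the claim that $\varphi_{d}(\c_*)$ (and, iterating, $\varphi_{w_1}\circ\cdots\circ\varphi_{w_k}(\c_*)$) is again a connected component of $F$ for suitably chosen digits, justified by saying that corner type forces the other first-level pieces to meet $\varphi_{d}(\c_*)$ only along edges that $\c_*$ does not reach. This is the gap. Being of corner type means $\c_*$ is contained in one of the four squares of side $(N-1)/N$ anchored at a corner of $[0,1]^2$ (not in a single $1/N$-cell, as your choice of $d_*$ presupposes, so there is in general no digit $d_*$ with $\c_*\subset\varphi_{d_*}([0,1]^2)$ and no basis for $\varphi_{d_*}(\c_*)\subset\c_*$). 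Say $\c_*\subset[0,\frac{N-1}{N}]^2$: then $\c_*$ avoids the top and right edges of $[0,1]^2$ but may perfectly well touch the bottom and left edges. Consequently $\varphi_d(\c_*)$ avoids the top and right edges of the cell $\varphi_d([0,1]^2)$ but can touch its bottom and left edges, and if $\D$ contains a digit whose cell sits below or to the left of $\varphi_d([0,1]^2)$, the copy $\varphi_d(\c_*)$ gets glued into a strictly larger component and is not an element of $\C(F)$. So ``one is free to pick $d_1$ anywhere'' is false, and in general no fixed pair of digits works at every level of the iteration. This is exactly the point the paper's proof is built around: given an arbitrary $\c\subset\c_n\in\C(Q_n)$, it selects inside $\c_n$ an extremal cell $R_n$ having no squares of $\c_n$ to its left, bottom, or bottom-left, and only for such a cell is the planted copy $\varphi_n(\c_*)$ guaranteed to be a full component of $F$; this adaptive, level-by-level choice (depending on $\c$) is the mechanism missing from your fixed two-map IFS.

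A second problem: even granting that every finite-word image $\varphi_{w_1}\cdots\varphi_{w_k}(\c_*)$ is a component, the elements of your attractor $K$ corresponding to infinite words are Hausdorff limits of these, i.e.\ singletons; the sets $\varphi_{w_1}\cdots\varphi_{w_k}(\c_*)$ along one infinite word are pairwise distinct components contained in nested cells but \emph{not} in one another, so they form no decreasing sequence, Lemma~\ref{lem:Mun} does not apply, and it is not established that the limit singletons are components of $F$. Note finally that the two strategies aim at different conclusions: your route would exhibit a single uncountable Cantor family of components (which would suffice if it worked), whereas the paper shows that no element of $\C(F)$ is isolated in the Hausdorff metric and then invokes the perfect-set lemma.
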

\begin{proof}
We may assume that $\c_* \subset [0,\frac{N-1}{N}]^2$. By the above lemma, it suffices to show that $\C(F)$ has no isolated points.  For any $\c \in \C(F)$ and any $n \in \Z^+$, there exists a connected component of $Q_n$, say $\c_n$, such that $\c \subset \c_n$. Note that $\c_n$ consists of squares with side length $1/N^n$. We can easily select a square such that there are no other squares in $\c_n$ located on the left or the bottom or the bottom left of it. Denote this sqaure by $R_n$, and let $\varphi_n$ be the self-similar mapping which maps $[0, 1]^2$ to $R_n$. From the location of $R_n$ it is easy to see that $\varphi_n(\c_*) \cap (F \setminus R_n)=\varnothing$ and therefore $\varphi_n(\c_*)$ is also a connected component of $F$. 

However, $\varphi_n(\c_*)$ might coincide with $\c$. To fix this problem, we shall find a connected component of $F$ other than $\c_*$ which is also of corner type. This is achieved as follows. Since $\c_*$ is of corner type, we know that $F$ is disconnected (by Corollary~\ref{cor:cooffver} and the fact that $F$ is not a singleton). Then we can find a large $K$ and a square $R'\subset Q_K$ such that there are no other squares in $Q_K$ located on the left of the bottom or the bottom left of it, and $\c_* \cap R'=\varnothing$. Denote $\varphi_{R'}$ to be the similitude mapping $[0,1]^2$ to $R$. Then $\varphi_{R'}(\c_*)$ is a connected component of $F$ which is of corner type, and $\varphi_{R'}(\c_*) \neq \c_*$. Also suppose $\varphi_{R'}(\c_*) \subset [0,\frac{N-1}{N}] \times [\frac{1}{N},1]$ (other three cases are similar).

If $\varphi_n(\c_*)=\c$ for some $n$ then we replace $R_n$ with a square in $Q_n$ such that there are no other squares in $\c_n$ located on the left of the top or the top left of it (if $R_n$ already satisfies this condition then there is no need of replacing it), and replace $\varphi_n(\c_*)$ with $\varphi_n(\varphi_{R'}(\c_*))$. We immediately see that $\varphi_n(\varphi_{R'})$ is a connected component of $F$ and $\varphi_n(\varphi_{R'}(\c_*))\neq\c$.

%
%
%
%

For large $n$ we have $R_n \cap \c \neq \varnothing$. This implies that $h(\c,\varphi_n(\c_*)) \leqslant 2/N^n$, which tends to $0$ as $n \to \infty$. In conclusion, $\c$ is not isolated and the result is proved.  
\end{proof}

In view of the above proposition, we shall now restrict our attention on fractal squares with every connected component vertical-like (recall this concept in Corollary~\ref{cor:cooffver}). In this case, for $\c,\c' \in \C(F)$ we naturally say $\c$ is on the \emph{left} of $\c'$ (or $\c'$ is on the \emph{right} of $\c$) if 
\[ \max\{x: (x,y) \in \c\} < \min\{x: (x,y) \in \c'\}, \quad \forall 0\leqslant y\leqslant 1. \]
Denote $\c_l, \c_r$ to be the leftmost and rightmost connected components of $F$. Applying a similar argument as in the proof of Proposition~\ref{prop:intersect} one can show that
\begin{prop}
Suppose $F$ is a fractal square with every connected component vertical-like. If $\C(F)$ is at most countable then $\c_l \cap (\c_l+(0,1)) \neq \varnothing$ and $\c_r \cap (\c_r+(0,1)) \neq \varnothing$.
\end{prop}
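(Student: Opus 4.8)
The plan is to argue by contradiction, mirroring the structure of the proof of Proposition~\ref{prop:intersect} but working directly with connected components of $F$ rather than with the pieces $F_i$. Assume $\C(F)$ is at most countable and, say, $\c_l \cap (\c_l + (0,1)) = \varnothing$; equivalently $\c_l \cap (\c_l - (0,1)) = \varnothing$. By Theorem~\ref{thm:main3}, ``at most countable'' forces $\#\C(F) < \infty$, so Proposition~\ref{prop:verhor} applies and $F$ is either vertical-like or horizontal-like; since every component is assumed vertical-like, $F$ is vertical-like, and we may as well pass (via Remark~\ref{rem:2}) to a level-$n$ construction in which every connected component of $\D'$ is vertical-like, so each $\mathcal{B} \in \C(Q_n)$ touches both the bottom and the top edge of $[0,1]^2$. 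From now on I will just say ``$\D$'' and ``$Q_1$'' for this possibly-refined construction.

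First I would record the geometric input: since $F = \bigcap Q_n$ is a decreasing limit and each component $\c$ of $F$ is an intersection $\bigcap \mathcal{B}_n$ with $\mathcal{B}_n \in \C(Q_n)$ decreasing, the leftmost component $\c_l$ is obtained by always choosing the leftmost cell-column at each stage; in particular $\c_l \subset \varphi_{d}([0,1]^2)$ for any $d$ in a leftmost pillar $P \subset \D$, where ``leftmost'' means the first coordinate of $d$ is minimal among all elements of $\D$. Next, the key claim is: there exists a connected component $\c_{i_0} \neq \c_l$ of $F$ with $\c_l \cap (\c_{i_0} - (0,1)) \neq \varnothing$, and likewise some $\c_{j_0} \neq \c_l$ with $\c_l \cap (\c_{j_0} + (0,1)) \neq \varnothing$. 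The proof is exactly as in Proposition~\ref{prop:intersect}: if no component of $F$ meets $\c_l - (0,1)$, then choosing $d_0$ in the leftmost pillar $P$ with largest second coordinate, $\varphi_{d_0}(\c_l)$ would be disjoint from $F \setminus \varphi_{d_0}([0,1]^2)$ (using that $P$ is leftmost to kill intersections with cells of other first coordinate, and the no-translation-upward hypothesis to kill the ones directly above), hence $\varphi_{d_0}(\c_l)$ would be a connected component of $F$ contained in a single cell $\varphi_{d_0}([0,1]^2)$. But such a component cannot be vertical-like, contradicting the standing assumption that every component of $F$ is vertical-like. (Here I replace the appeal to Corollary~\ref{cor:cooffver} by the hypothesis itself, which is cleaner.)

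Then I would run the same coordinate bookkeeping as in Proposition~\ref{prop:intersect}. From $\c_l \cap (\c_{i_0} - (0,1)) \neq \varnothing$ and $\c_l \cap (\c_{j_0} + (0,1)) \neq \varnothing$ and the set equation $F = (F + \D)/N$ applied once, one finds $d_*, d^* \in P' $ (a leftmost pillar) and $d_{i_0} \in \D$ with second coordinate $N-1$, $d_{j_0} \in \D$ with second coordinate $0$, reached from the appropriate components, such that $\varphi_{d_*}(F) \cap (\varphi_{d_{i_0}}(F) - (0,1)) \neq \varnothing$ and $\varphi_{d^*}(F) \cap (\varphi_{d_{j_0}}(F) + (0,1)) \neq \varnothing$; this forces $d_{i_0} - d_*,\, d^* - d_{j_0} \in \{(\epsilon, N-1) : \epsilon = 0, \pm 1\}$. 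Since $\c_l$ is strictly to the left of $\c_{i_0}$ and $\c_{j_0}$, we get $x_{d_*} < x_{d_{j_0}}$ and $x_{d^*} < x_{d_{i_0}}$, and the inequality chain
\begin{equation*}
x_{d_{j_0}} - x_{d^*} \geqslant (x_{d_*} + 1) - (x_{d_{i_0}} - 1) = (x_{d_*} - x_{d_{i_0}}) + 2
\end{equation*}
together with $x_{d_*} - x_{d_{i_0}},\, x_{d_{j_0}} - x_{d^*} \in \{0, \pm 1\}$ pins down $x_{d_*} - x_{d_{i_0}} = -1$, $x_{d_{j_0}} - x_{d^*} = 1$, and equality throughout, so $x_{d_*} = x_{d^*} = x_{d_{i_0}} - 1 = x_{d_{j_0}} - 1$. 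Consequently $\varphi_{d_*}(F) \cap (\varphi_{d_{i_0}}(F) - (0,1))$ is a singleton, forcing $\varphi_{d_*}((1,0)) = \varphi_{d_{j_0}}((0,0)) \in \varphi_{d_*}(F) \cap \varphi_{d_{j_0}}(F)$, so $d_*$ and $d_{j_0}$ lie in the same connected component of $\D$; but then $\varphi_{d_*}(F)$ and $\varphi_{d_{j_0}}(F)$ lie in the same component of $F$, contradicting that one contains a piece of $\c_l$ and the other a piece of $\c_{j_0} \neq \c_l$. This contradiction proves $\c_l \cap (\c_l + (0,1)) \neq \varnothing$; the argument for $\c_r$ is symmetric.

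The main obstacle I anticipate is purely organizational: transferring the piece-level reasoning of Proposition~\ref{prop:intersect} (which is about the finitely many $F_i$ coming from $\C(\D)$) to the component-level setting of $\C(F)$, which is only known to be finite \emph{a posteriori} and where ``leftmost/rightmost component'' must be defined via nested cells. Once one fixes the convention that $\c_l$ sits inside the leftmost pillar at every level and re-reads ``component of $F$'' in place of ``$F_i$'' everywhere, the combinatorial core and the coordinate inequalities are identical; the only genuinely new point is invoking the hypothesis ``every component is vertical-like'' in place of Corollary~\ref{cor:cooffver} to rule out a corner-type component, and invoking Theorem~\ref{thm:main3} at the outset to upgrade ``at most countable'' to ``finite'' so that Proposition~\ref{prop:verhor} and Remark~\ref{rem:2} are available.
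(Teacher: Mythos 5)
Your overall strategy --- transplanting the proof of Proposition~\ref{prop:intersect} from the pieces $F_i$ to the components of $F$ --- is exactly what the paper intends, but two of your steps do not survive the transplant. The first is the opening appeal to Theorem~\ref{thm:main3}: that theorem is \emph{proved} via this proposition (through Corollaries 5.5--5.6 and the final theorem of Section 5), so invoking it here to upgrade ``at most countable'' to ``finite'' is circular. It is also unnecessary: the hypothesis that every component of $F$ is vertical-like already forces every connected component of $\D$ (and of each level-$n$ digit set $\D'$) to be vertical-like, since each such component contains at least one component of $F$, which by hypothesis reaches both $y=0$ and $y=1$. So Proposition~\ref{prop:verhor} and Remark~\ref{rem:2} should be bypassed, not cited.

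The second problem is the coordinate bookkeeping. In Proposition~\ref{prop:intersect} the strict inequalities $x_{d_*}<x_{d_{j_0}}$ and $x_{d^*}<x_{d_{i_0}}$ hold because $d_*,d^*$ and $d_{i_0},d_{j_0}$ lie in \emph{distinct components of $\D$}, which are separated cell by cell at every height. For components of $F$ this fails: two distinct components of $F$ can meet the same level-$1$ cell, so the cells containing the contact points of $\c_l$ and of $\c_{j_0}$ need not have distinct first coordinates, and your inequality chain collapses. Relatedly, your closing contradiction (``$\varphi_{d_*}(F)$ and $\varphi_{d_{j_0}}(F)$ lie in the same component of $F$'') is a non sequitur: $d_*$ and $d_{j_0}$ lying in the same component of $\D$ only places those cells in the same $F_i$, which may contain many components of $F$. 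The remedy is that the component-level step is actually \emph{easier} than the cell-level one: ``left of'' for components of $F$ is defined by a strict inequality on point coordinates at every height. Writing the contact points as $(x_1,0)\in\c_l$ with $(x_1,1)\in\c_{i_0}$, and $(x_2,1)\in\c_l$ with $(x_2,0)\in\c_{j_0}$, the slice comparison at $y=0$ gives $x_1<x_2$ while the one at $y=1$ gives $x_2<x_1$ --- an immediate contradiction, with no cells and no singleton analysis. Finally, one smaller gap: the inference ``disjoint from $F$ outside the cell, hence a connected component of $F$'' was legitimate for $\varphi_{d_0}(F_1)$ because $F\setminus\varphi_{d_0}(F_1)$ is a finite union of compacta, making $\varphi_{d_0}(F_1)$ clopen in $F$; for $\varphi_{d_0}(\c_l)$ the complement involves $\varphi_{d_0}(F\setminus\c_l)$, which need not be closed, so you must argue separately that the component of $F$ containing $\varphi_{d_0}(\c_l)$ cannot leave the cell (for instance, that it cannot reach $y=1$) before concluding it is not vertical-like.
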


\begin{coro}\label{cor:ls1}
Suppose $F$ is a fractal square with every connected component vertical-like and $\C(F)$ at most countable. Then
\begin{enumerate}[(1)]
\item for any pillar $P \subset \D$, either $\#P=N$ or $\bigcup_{d \in P}\varphi_d(F)$ contains at most two connected components.
\item for any $D \in \C(\D)$, if $P$ is a leftmost (or rightmost) pillar of $D$ then either $\#P=N$ or $\bigcup_{d \in D} \varphi_{d}(F)$ is connected.	
\end{enumerate}
\end{coro}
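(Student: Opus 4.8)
The plan is to derive both claims from the previously established ``vertical-like'' structure theory, namely Proposition~\ref{prop:intersect} (adapted via the preceding proposition to the case where merely $\C(F)$ is at most countable, with every component vertical-like) together with Lemma~\ref{lem:2} and Corollary~\ref{cor:cooffver}. Throughout I assume $F$ is disconnected and $\#P<N$ is the interesting case; when $\#P=N$ the pillar fills an entire column and nothing more need be said.

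For part (1), first I would observe that by the proposition immediately above this corollary, $\c_l\cap(\c_l+(0,1))\neq\varnothing$ and $\c_r\cap(\c_r+(0,1))\neq\varnothing$, so in particular $F\cap(F+(0,1))\neq\varnothing$; hence every pillar $P\subset\D$ lies inside a single connected component $D$ of $\D$, and the vertices $\{(d,i):d\in P\}$ corresponding to the leftmost and rightmost components of $F$ meeting $\bigcup_{d\in P}\varphi_d([0,1]^2)$ are joined up vertically (this is the analogue of the $\c_{P,L},\c_{P,R}$ bookkeeping from Section~4). If $\bigcup_{d\in P}\varphi_d(F)$ had a third connected component $\c$, then as in the proof of Lemma~\ref{lem:4} one checks that $\c$ does not meet $\varphi_{d'}(F)$ for any $d'\notin P$ (because such an intersection could only occur along the left or right vertical edge of the pillar, which belongs to the first two components), so $\c$ is itself a connected component of $F$ contained in $\bigcup_{d\in P}\varphi_d([0,1]^2)$, a union of $\#P<N$ stacked squares. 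Such a set cannot be vertical-like, contradicting the hypothesis that \emph{every} connected component of $F$ is vertical-like. This gives the dichotomy in (1). The analogue of Lemma~\ref{lem:2} (monotonicity under enlarging a pillar) lets one reduce an arbitrary pillar to a shortest one, exactly as in Corollary~\ref{cor:last}, so it suffices to argue for a pillar of minimal length.

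For part (2), let $D\in\C(\D)$ and let $P$ be a leftmost pillar of $D$. If $\#P=N$ there is nothing to prove, so assume $\#P<N$. By part (1), $\bigcup_{d\in P}\varphi_d(F)$ has at most two connected components, a ``left'' one and a ``right'' one. Since $P$ is a leftmost pillar, no square of $\bigcup_{d\in D}\varphi_d([0,1]^2)$ lies strictly to the left of the column containing $P$; I would argue that the ``left'' component of the pillar therefore cannot be joined, within $\bigcup_{d\in D}\varphi_d(F)$, to anything outside $\varphi_P:=\bigcup_{d\in P}\varphi_d([0,1]^2)$ except possibly through the ``right'' component of the pillar itself, and then conclude that if the pillar's two components were genuinely distinct, the ``left'' one would be an entire connected component of $F$ sitting inside $\varphi_P$, again not vertical-like — contradiction. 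Hence the pillar is connected, and running the argument of Proposition~\ref{prop:mindiscon} (propagate connectedness of a minimal pillar along an arbitrary $d,d'\in D$ via the chain $(F+d_k)\cap(F+d_{k+1})\neq\varnothing$, using that each $G_F[P_k]$ is now connected) shows $\bigcup_{d\in D}\varphi_d(F)$ is connected. The rightmost-pillar case is symmetric.

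\textbf{Main obstacle.} The delicate point is the claim that the ``left'' component of a leftmost pillar $P$ of $D$ is insulated from the rest of $\bigcup_{d\in D}\varphi_d(F)$: one must rule out the possibility that it connects to a component of an \emph{adjacent} pillar of $D$ along the right edge of $\varphi_P$ in a way that does not force the pillar itself to be connected. This requires a careful case analysis of which boundary points of the stacked squares in $\varphi_P$ can touch neighbouring squares — essentially the same edge-incidence bookkeeping as in the proof of Proposition~\ref{prop:intersect}, where intersections of translated copies of $F$ were pinned down to differences in $\{(\epsilon,N-1):\epsilon=0,\pm1\}$ — and I expect this to be the only step needing genuine care; the rest is a routine transcription of Section~3's arguments with ``$\#\C(F)<\infty$'' replaced by ``$\C(F)$ at most countable and every component vertical-like''.
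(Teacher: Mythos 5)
Your part (1) is correct and is essentially the paper's own argument: the proposition immediately preceding the corollary gives $\c_l\cap(\c_l+(0,1))\neq\varnothing$ and $\c_r\cap(\c_r+(0,1))\neq\varnothing$, so the copies $\varphi_d(\c_l)$ (resp. $\varphi_d(\c_r)$), $d\in P$, chain up into a single component, and any third component of $\bigcup_{d\in P}\varphi_d(F)$ is insulated from every $\varphi_{d'}(F)$ with $d'\notin P$ because (by maximality of the pillar and the left-to-right ordering of the vertical-like components of $F$) such contact can only occur on the two vertical edges of the pillar, where $F$ meets only $\c_l$- and $\c_r$-copies. The only differences are cosmetic: you derive the contradiction from the stray component failing to be vertical-like, whereas the paper notes it is of corner type and invokes Proposition~\ref{prop:cortype} against countability (both are valid), and your reduction to a shortest pillar via the analogue of Lemma~\ref{lem:2} is unnecessary — the argument applies to an arbitrary pillar directly. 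The first half of your part (2) (a leftmost pillar of $D$ with $\#P<N$ must have connected image, since otherwise its ``left'' piece would be an isolated, non-vertical-like component of $F$) likewise coincides with the paper's proof.

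The gap is in the last step of part (2). You invoke the chaining of Proposition~\ref{prop:mindiscon} ``using that each $G_F[P_k]$ is now connected,'' but at that point you only know this for the leftmost (and, by symmetry, the rightmost) pillar of $D$; for an intermediate pillar $P_k$ part (1) yields \emph{at most two} components, and Lemma~\ref{lem:2} cannot upgrade this to one, since the leftmost pillar need not have minimal length. The chaining genuinely needs connectedness of every pillar: a contact between consecutive pillars ties the right piece of one to the left piece of the next, so if some intermediate pillar splits, nothing in the argument joins its two pieces, and the conclusion that $\bigcup_{d\in D}\varphi_d(F)$ is connected does not follow. For what it is worth, the paper's own proof of (2) stops exactly where your solid part ends — it concludes only that $\bigcup_{d\in P}\varphi_d(F)$ is connected for the leftmost pillar and never supplies the bridge to $\bigcup_{d\in D}\varphi_d(F)$ (part (2) is also not used in the final theorem of Section 5, which relies only on part (1) and Corollary~\ref{cor:ls2}) — so your proposal is no less complete than the published argument; but as a proof of the stated conclusion, the step asserting connectedness of all the $P_k$ is missing and would require an additional idea.
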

\begin{proof}
(1) By the above proposition we know that $\{\varphi_d(\c_l):d \in P\}$ lie in the same connected component of $F$, and so do $\{\varphi_d(\c_r):d \in P\}$. If there is another connected component $\c$ of $\bigcup_{d \in P}\varphi_d(F)$, then clearly $\c \cap \bigcup_{d \in \D\setminus P} \varphi_d(F) = \varnothing$, and hence $\c \in \C(F)$. If $\#P<N$ then $\c$ is of corner type, which contradicts the fact that $\C(F)$ is at most countable (see Proposition~\ref{prop:cortype}). In conclusion, either $\#P=N$ or $\bigcup_{d \in P}\varphi_d(F)$ has at most two connected components (one contains $\{\varphi_d(\c_l):d \in P\}$, the other contains $\{\varphi_d(\c_r):d \in P\}$, but these two may coincide with each other). \\
(2) Suppose $P$ is a leftmost pillar of $D$. If $\#P<N$ and $\bigcup_{d \in P}\varphi_d(F)$ is disconnected (i.e., it has at least two connected components), then we know from the above proof that $\bigcup_{d \in P}\varphi_d(F)$ has exactly two connected components, one contains $\bigcup_{d \in P}\varphi_d(\c_l)$, and the other contains $\bigcup_{d \in P}\varphi_d(\c_r)$. Since $P$ is leftmost, the component to which $\bigcup_{d \in P}\varphi_d(\c_l)$ belongs is also a connected component of $F$. It follows from $n<N$ that this component is of corner type and leads to a contradiction. In conclusion, if $\#P<N$ then $\bigcup_{d \in P}\varphi_d(F)$ is connected.
\end{proof}

\begin{coro}\label{cor:ls2}
Suppose $F$ is a fractal square with every connected component vertical-like and $\C(F)$ at most countable. For any pillar $P$ with $\#P<N$, if $\bigcup_{d \in P} \varphi_d(F)$ has at most two connected components, then so does $\bigcup_{d \in P'}\varphi_d(F)$ for any other pillar $P'$ with $\#P' \geqslant \#P$.  
\end{coro}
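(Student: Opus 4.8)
The plan is to mimic the decomposition-and-translation argument from the proof of Lemma~\ref{lem:2}, now working with the stronger two-component conclusion supplied by Corollary~\ref{cor:ls1}(1). Write $P = \{(a,b),(a,b+1),\ldots,(a,b+p)\}$ and $P' = \{(c,e),(c,e+1),\ldots,(c,e+q)\}$ with $1 \leqslant p \leqslant q < N$ (if $p = 0$ there is nothing to prove, and the case $\#P' = N$ is excluded). If $q = p$ then $\bigcup_{d \in P'}\varphi_d(F)$ is a translate of $\bigcup_{d \in P}\varphi_d(F)$, so it again has at most two connected components. So assume $q > p$ and decompose
\begin{equation*}
P' = \bigcup_{k=0}^{q-p} \{(c, e+k), (c, e+k+1), \ldots, (c, e+k+p)\} =: \bigcup_{k=0}^{q-p} P'_k.
\end{equation*}

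Each $P'_k$ has exactly $p+1 = \#P$ elements, so $\bigcup_{d \in P'_k}\varphi_d(F)$ is a translate of $\bigcup_{d \in P}\varphi_d(F)$ and hence has at most two connected components. The key structural point, as in Corollary~\ref{cor:ls1}(1), is that one of these two components contains all of $\{\varphi_d(\c_l): d \in P'_k\}$ (call it the \emph{left} piece) and the other contains all of $\{\varphi_d(\c_r): d \in P'_k\}$ (the \emph{right} piece), these possibly coinciding. Next I would glue consecutive blocks: since $p \geqslant 1$, the overlap $P'_k \cap P'_{k+1}$ is nonempty, and in fact it shares a whole pillar segment of $p$ squares. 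A cell $\varphi_d([0,1]^2)$ with $d \in P'_k \cap P'_{k+1}$ carries both $\varphi_d(\c_l)$ and $\varphi_d(\c_r)$, so the left piece of $\bigcup_{d\in P'_k}\varphi_d(F)$ meets the left piece of $\bigcup_{d\in P'_{k+1}}\varphi_d(F)$ (both contain $\varphi_d(\c_l)$), and similarly on the right. Therefore the union over all $k$ of the left pieces is connected, the union over all $k$ of the right pieces is connected, and $\bigcup_{d \in P'}\varphi_d(F) = \bigl(\bigcup_k \text{left}_k\bigr) \cup \bigl(\bigcup_k \text{right}_k\bigr)$ has at most two connected components.

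The one point that needs a little care — and the likely main obstacle — is making precise that after passing to $\#P'_k = \#P$ we really inherit the \emph{two-component with a left and a right distinguished piece} structure rather than merely "at most two components". This follows because $\bigcup_{d\in P'_k}\varphi_d(F)$ is a \emph{rigid translate} of $\bigcup_{d\in P}\varphi_d(F)$, so the analysis in the proof of Corollary~\ref{cor:ls1}(1) applies verbatim: $\{\varphi_d(\c_l)\}$ lie in one component and $\{\varphi_d(\c_r)\}$ in one component, with the understanding that if $\bigcup_{d\in P}\varphi_d(F)$ happens to be connected then left and right pieces coincide and the whole inductive gluing still yields a single connected set. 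One should also note that we never need $\C(F)$ countable again inside this argument beyond what Corollary~\ref{cor:ls1}(1) already encodes, so the proof is purely combinatorial once that corollary is in hand; hence I would simply remark that the proof is identical in spirit to that of Lemma~\ref{lem:2}, with $F_i$ replaced throughout by $\c_l$ and $\c_r$.
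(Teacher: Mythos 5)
Your argument is essentially the paper's: the paper reduces the corollary to the decomposition-and-translation proof of Corollary~\ref{cor:last} after observing (via the analysis in Corollary~\ref{cor:ls1}(1)) that the at most two components of $\bigcup_{d\in P}\varphi_d(F)$ are exactly the piece containing all $\varphi_d(\c_l)$ and the piece containing all $\varphi_d(\c_r)$, and that is precisely the structure-plus-gluing argument you carry out. Your handling of the ``key structural point'' (transferring the left/right structure to the blocks $P'_k$ by a rigid translation) is correct and is the same device the paper uses.

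There is, however, one concrete gap: the claim ``if $p=0$ there is nothing to prove'' is false. The corollary allows $\#P=1$ (and this case can genuinely be needed, since in Case 2 of the final theorem the minimal pillar $P_0$ may consist of a single cell), and when $p=0$ your blocks $P'_k$ are single cells, so $P'_k\cap P'_{k+1}=\varnothing$ and the overlap gluing you rely on breaks down. The case is not vacuous, but it is easily repaired: consecutive cells of $P'$ are vertically adjacent, and the proposition preceding Corollary~\ref{cor:ls1} gives $\c_l\cap\bigl(\c_l+(0,1)\bigr)\neq\varnothing$ and $\c_r\cap\bigl(\c_r+(0,1)\bigr)\neq\varnothing$, hence $\varphi_{(c,e+k)}(\c_l)\cap\varphi_{(c,e+k+1)}(\c_l)\neq\varnothing$ and likewise on the right; so the left pieces of the single-cell blocks still chain together, as do the right pieces (here the hypothesis for $\#P=1$ just says $F$ has at most two components, necessarily $\c_l$ and $\c_r$). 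A smaller point: the parenthetical ``the case $\#P'=N$ is excluded'' is both inconsistent with your own constraint $q<N$ (which permits $q=N-1$, i.e.\ $\#P'=N$) and unnecessary --- the statement imposes no upper bound on $\#P'$, the same gluing works verbatim for a full column, and the proof of the final theorem uses the conclusion for every pillar, so this case should not be discarded.
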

\begin{proof}
From the above proof one can deduce that $\bigcup_{d \in P}\varphi_d(F)$ has exactly two connected components, one contains $\bigcup_{d \in P}\varphi_d(\c_l)$, the other contains $\bigcup_{d \in P}\varphi_d(\c_r)$, but these two may coincide with each other. We omit the rest part of proof here since it is similar to the one of Corollary~\ref{cor:last}. 	
\end{proof}

To prove Theorem~\ref{thm:main3} it suffices to show the following result.
\begin{theorem}
Suppose $F$ is a fractal square with every connected component vertical-like. If $\C(F)$ is at most countable then $\C(F)$ is a finite set.
\end{theorem}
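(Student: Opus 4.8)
The plan is to argue by contradiction: assume $F$ is a fractal square with every connected component vertical-like, $\C(F)$ is at most countable, but $\#\C(F) = \infty$. By Remark~\ref{rem:2} we may (after passing to $Q_n$ for a suitable $n$, i.e. replacing $N$ by $N^n$ and $\D$ by $\D'$) assume that every pillar $P \subset \D$ has $\#P < N$, since otherwise $F$ would be a product of a Cantor set with $[0,1]$ and hence a union of uncountably many parallel segments, contradicting countability. Let $P_0$ be a pillar with the least number of elements. The first key step is to show, using Corollary~\ref{cor:ls1}(1) and the countability hypothesis, that $\bigcup_{d \in P_0}\varphi_d(F)$ has exactly two connected components, say $E_L$ (containing $\bigcup_{d \in P_0}\varphi_d(\c_l)$) and $E_R$ (containing $\bigcup_{d \in P_0}\varphi_d(\c_r)$); these may a priori coincide, but if they did then $\bigcup_{d \in P_0}\varphi_d(F)$ would be connected, and then by Corollary~\ref{cor:ls2} every $\bigcup_{d \in P}\varphi_d(F)$ would be connected, which by the same argument as in Proposition~\ref{prop:mindiscon} forces $\#\C(G_F)=m$ and then (by Theorem~\ref{thm:main1}) $\#\C(F) = m < \infty$, a contradiction. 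So $E_L \neq E_R$.

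The heart of the argument is then to build a ``level-$2$'' replacement analogous to the machinery of Section~4, but now phrased purely in terms of the decomposition $F = \bigcup_j F^*_j$ where $\{F^*_j\}_{j=1}^{M}$, $M = \#\C(G_F)$, are the pieces coming from Lemma~\ref{lem:3}. I would first observe that the countability of $\C(F)$ is inherited: each $F^*_j$ is a vertical-like fractal square (with respect to the system $\{\psi_{d'}\}$) and $\C(F^*_j) \subset \C(F)$ up to the obvious identifications, so every $F^*_j$ has at most countably many connected components, each vertical-like. The crucial point is to show that $\#\C(G'_F) = \#\C(G_F)$ under the countability hypothesis alone, mirroring Proposition~\ref{prop:last}. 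The proof of Proposition~\ref{prop:last} used only: (i) Proposition~\ref{prop:intersect}/Corollary~\ref{cor:kintersect} (which have countable analogues — indeed the Proposition stated just before Corollary~\ref{cor:ls1} gives exactly $\c_l \cap (\c_l + (0,1)) \neq \varnothing$ and the same for $\c_r$); (ii) Corollary~\ref{cor:last}, whose countable analogue is Corollary~\ref{cor:ls2} combined with Corollary~\ref{cor:ls1}; and (iii) Corollary~\ref{cor:ejfij} and Lemma~\ref{lem:3}, which are structural and require no finiteness. Carefully re-running that argument yields $\#\C(G'_F) = \#\C(G_F)$, and then Proposition~\ref{prop:last1} gives $\#\C(F) = \#\C(G_F) = M < \infty$, the desired contradiction.

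So the logical skeleton is: countable $\Rightarrow$ every pillar has the two-component property (Corollary~\ref{cor:ls1}) $\Rightarrow$ the ``$G_F[P]$ has $\le 2$ components and its $L$/$R$ parts match the $G'_F[P]$ parts'' conclusion of Corollary~\ref{cor:last} holds $\Rightarrow$ $\#\C(G'_F) = \#\C(G_F)$ $\Rightarrow$ (Proposition~\ref{prop:last1}) $\#\C(F)$ is finite. The one genuinely new ingredient compared to Section~4 is that there we could invoke Corollary~\ref{cor:cooffver} (``a corner-type component forces $\#\C(F) = \infty$'') to rule out bad configurations, whereas here the substitute is Proposition~\ref{prop:cortype} (``a corner-type component forces $\C(F)$ uncountable''); these do the same job, so the adaptation is largely a matter of bookkeeping — replacing every appeal to ``$\#\C(F) < \infty$ and Corollary~\ref{cor:cooffver}'' by ``$\C(F)$ countable and Proposition~\ref{prop:cortype}''.

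The main obstacle I anticipate is verifying that the pieces $F^*_j$ are genuinely vertical-like fractal squares so that the preceding corollaries of this section apply to them, and keeping straight which of the quantities ($\#\C(G_F)$, $\#\C(\D^*)$, the number of two-component pillars, etc.) one is inducting on; in Section~4 the finiteness assumption let one freely invoke Corollary~\ref{cor:cooffver} at any level, and one must check that the corner-type dichotomy of Proposition~\ref{prop:cortype} really is available at the level-$2$ stage too. Once that is confirmed, the proof is a faithful transcription of the argument for the ``only if'' direction of Theorem~\ref{thm:main2} together with Proposition~\ref{prop:last1}.
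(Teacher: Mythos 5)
Your overall strategy --- replace every appeal to ``$\#\C(F)<\infty$ plus Corollary~\ref{cor:cooffver}'' in Section~4 by ``$\C(F)$ countable plus Proposition~\ref{prop:cortype}'', re-derive $\#\C(G'_F)=\#\C(G_F)$, and finish with Proposition~\ref{prop:last1} --- is plausible in outline, but as written it is a plan rather than a proof: the entire technical content (countable analogues of Proposition~\ref{prop:mindiscon}, Lemma~\ref{lem:4}, Corollary~\ref{cor:last} and Proposition~\ref{prop:last}) is compressed into ``carefully re-running that argument yields\ldots'', and you yourself list unresolved obstacles at the end. Two sub-steps are also wrong as stated. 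First, the reduction ``we may assume every pillar has $\#P<N$, since otherwise $F$ is a product of a Cantor set with $[0,1]$'' is false: a single full column does not force $F$ to be such a product; the correct dichotomy concerns only the \emph{minimal} pillar $P_0$ (if $\#P_0=N$ then every pillar is a full column and $F$ is either $[0,1]^2$ or a Cantor set times $[0,1]$), and passing to $Q_n$ via Remark~\ref{rem:2} does not change this. Second, in the branch $E_L=E_R$ you invoke Corollary~\ref{cor:ls2} to conclude that every $\bigcup_{d\in P}\varphi_d(F)$ is \emph{connected}, but that corollary only yields ``at most two components''; to propagate connectedness to larger pillars you need Lemma~\ref{lem:2} (which, fortunately, is purely graph-theoretic and requires no finiteness). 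Note also that the $F^*_j$ are not fractal squares, only finite unions of sets $\psi_{d'}(F)$, so the Section~5 corollaries cannot be applied to them verbatim.

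More importantly, the detour through $G_F$ and $G'_F$ is unnecessary: the two corollaries you quote already finish the proof directly, and this is exactly what the paper does. Let $P_0$ be a pillar with the least number of elements. If $\#P_0=N$ then $F=[0,1]^2$ or $F$ is the product of a Cantor set with $[0,1]$, so $\C(F)$ is a singleton or uncountable. If $\#P_0<N$, Corollary~\ref{cor:ls1}(1) gives that $\bigcup_{d\in P_0}\varphi_d(F)$ has at most two connected components, and Corollary~\ref{cor:ls2} propagates this bound to every pillar $P$ (all of which satisfy $\#P\geqslant\#P_0$). Since $F=\bigcup_{P}\bigcup_{d\in P}\varphi_d(F)$ is a union over the finitely many pillars and every connected component of $F$ is a union of connected components of these finitely many compact pieces, we get $\#\C(F)\leqslant 2\cdot\#\{\text{pillars}\}<\infty$. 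The only ingredient beyond the Section~5 corollaries is this counting observation, which your sketch misses.
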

\begin{proof}
Again choose $P_0$ which is a pillar with the least number of elements. \\
\textbf{Case 1}. $\#P_0=N$. It is easy to see that either $F=[0,1]^2$ or $F$ is the product of a Cantor set with $[0,1]$. Notice that $\C(F)$ is a singleton in the former case, and an uncountable set in the latter. \\
\textbf{Case 2}. $\#P_0<N$. It follows from Corollary~\ref{cor:ls1} that $\bigcup_{d \in P_0}\varphi_d(F)$ contains at most two connected components. Combining with Corollary~\ref{cor:ls2}, this implies that $\bigcup_{d \in P}\varphi_d(F)$ contains at most two connected components for every pillar $P$, and hence $\#\C(F)<\infty$ since there are only finitely many pillars.

\end{proof}

\bigskip

\noindent{\bf Acknowledgements.}
The author would like to thank Professor Huo-Jun Ruan for many valuable conversations and suggestions on the final exposition. He also thanks Professor Yang Wang for his helpful advice and Professor Jun-Jie Miao for a discussion on Theorem~\ref{thm:main3}.

\bibliographystyle{amsplain}

\end{document}